\documentclass[twoside,11pt]{article}

%

%
%
%

\makeatletter
\let\Ginclude@graphics\@org@Ginclude@graphics
\makeatother

\usepackage[preprint]{jmlr2e}


\usepackage{xr}
\usepackage{hyperref}
\usepackage{algorithm,algorithmic}
\usepackage{makecell}
\usepackage{amssymb, multirow, paralist, color,amsmath}
\usepackage{enumitem}
\usepackage{textcase}
\usepackage[T1]{fontenc}
\usepackage{mathtools}
\usepackage{cleveref}
  
\usepackage{empheq}
\usepackage{color}
\definecolor{darkgreen}{rgb}{0.00,0.5,0.00}
\usepackage{fancybox}
\usepackage{tablefootnote}
\usepackage{natbib}
\newtheorem{asm}{Assumption}

\usepackage{subfigure}

\hypersetup{
	colorlinks=true,        
	linkcolor=blue,         
	citecolor=darkgreen,         
	urlcolor=darkgreen           
}

\definecolor{myteal}{RGB}{27,158,119}
\definecolor{myorange}{RGB}{217,95,2}
\definecolor{myred}{RGB}{231,41,138}
\definecolor{mypurple}{RGB}{152,78,163}
\definecolor{myblue}{rgb}{.9, .9, 1}
\definecolor{mygreen}{RGB}{0,100,0}
\definecolor{mycyan}{rgb}{0.88,1,1}
\definecolor{mydarkred}{RGB}{192,47,25}

\def\inner#1#2{\langle #1, #2 \rangle}

\def \bD {\mathbf{D}}
\def \S {\mathbf{S}}

\def \X {\mathcal{X}}

\def \R {\mathbb{R}}

\def \w {\w}
\def \v {\mathbf{v}}

\def \x {\mathbf{x}}

\def \E {\mathbb{E}}

\def \x {\mathbf{x}}

\def \e {\mathbf{e}}

\def \1 {\mathbf{1}}
\def \z {\mathbf{z}}

\def \y {\mathbf{y}}
\def \u {\mathbf{u}}

\def \L {\mathcal{L}}

\newcommand{\norm}[1]{\|#1\|}

\newcommand{\Norm}[1]{\left\|#1\right\|}

\usepackage[skins]{tcolorbox}

\def \F {\mathcal{F}}
\def \I {\mathbf{I}}

\def \Q {\mathcal{Q}}

\usepackage{bbding}

\usepackage{cancel}
\usepackage{colortbl}

\def \B {\mathcalB}
\def \C {\mathbf C}
\def \O {\mathcal O}

\usepackage{bbm}
\usepackage{booktabs}
\usepackage{tablefootnote}

\newtheorem{cor}{Corollary}
\newcommand*\tcircle[1]{%
  \raisebox{-0.5pt}{%
    \textcircled{\fontsize{7pt}{0}\fontfamily{phv}\selectfont #1}%
  }%
}

\newlength\mytemplen
\newsavebox\mytempbox

\makeatletter
\newcommand\mybluebox{%
	\@ifnextchar[
	{\@mybluebox}%
	{\@mybluebox[0pt]}}

\def\@mybluebox[#1]{%
	\@ifnextchar[
	{\@@mybluebox[#1]}%
	{\@@mybluebox[#1][0pt]}}

\def\@@mybluebox[#1][#2]#3{
	\sbox\mytempbox{#3}%
	\mytemplen\ht\mytempbox
	\advance\mytemplen #1\relax
	\ht\mytempbox\mytemplen
	\mytemplen\dp\mytempbox
	\advance\mytemplen #2\relax
	\dp\mytempbox\mytemplen
	\colorbox{myblue}{\hspace{1em}\usebox{\mytempbox}\hspace{1em}}}

\def \y {\mathbf{y}}
\def \E {\mathbb{E}}

\def \x {\mathbf{x}}

\def \D {\mathbf{D}}
\def \z {\mathbf{z}}
\def \u {\mathbf{u}}

\def \Z {\mathcal{Z}}

\def \w {\mathbf{w}}

\def \R {\mathbb{R}}
\def \S {\mathcal{S}}

\def \Q {\mathcal{Q}}

\def \q {\mathbf{q}}
\def \v {\mathbf{v}}

\def \q {\mathbf{q}}

\def \bg {\mathbf{g}}

\usepackage{amssymb}
\usepackage{pifont}

\usepackage{bm}

\def \B {\mathcal{B}}

\def \C {\mathcal{C}}

\def \bff {\mathbf{f}}

\def \X {\mathcal{X}}

\def \F {\mathcal{F}}

\usepackage{fontawesome}
\usepackage{threeparttable,adjustbox}

\usepackage[colorinlistoftodos,bordercolor=green,backgroundcolor=white,linecolor=green,textsize=scriptsize]{todonotes}

\newcommand{\CC}[1]{\cellcolor{gray!30}}

 \usepackage{tikz}
\usetikzlibrary{automata, positioning, arrows}
\usepackage{xcolor}




\firstpageno{1}

\begin{document}
	
\title{Finite-Sum Coupled Compositional Stochastic Optimization: Theory and Applications\thanks{
This is an extended version of our earlier proceeding's version in ICML'22. In this version, we make two updates. First,  we add an improved convergence result of the objective gap in Theorem 6 regarding the strongly convex objective. Second, we correct a statement of Theorem 3 in previous version (Theorem~\ref{thm:cvx_sox-simp} in this version). In present proof of Theorem~\ref{thm:cvx_sox-simp}, which is the same as previous version, we can only plugin any fixed primal dual variables that are independent of the randomness of the algorithm. Hence, Theorem 5 is only for the convergence of the weak primal-dual gap. This is similar to prior work~\cite{DBLP:conf/icml/SongWD21} on algorithms with stochastic dual coordinate updates. 
}}
	
\author{\name Bokun Wang \email bokun-wang@tamu.edu\\
\addr Department of Computer Science and Engineering\\
Texas A\&M University~\thanks{Most work was done when the authors were at The University of Iowa. }\\
College Station, TX~77843, USA
\AND 
\name Tianbao Yang \email tianbao-yang@tamu.edu\\
\addr Department of Computer Science and Engineering\\
Texas A\&M University~$^\dagger$\\
College Station, TX 77843, USA}
	
\editor{}
	
\maketitle

\begin{abstract}
This paper studies stochastic optimization for a sum of compositional functions, where the inner-level function of each summand is coupled with the corresponding summation index. We refer to this family of problems as finite-sum coupled compositional optimization (FCCO). It has broad applications in machine learning for optimizing non-convex or convex compositional measures/objectives such as average precision (AP), $p$-norm push,  listwise ranking losses, neighborhood component analysis (NCA),  deep survival analysis,  deep latent variable models, etc., which deserves finer analysis. Yet, existing algorithms and analyses are restricted in one or other aspects. The contribution of this paper is to provide a comprehensive convergence analysis of a simple stochastic algorithm for both non-convex and convex objectives. Our key result is the {\bf improved oracle complexity with the parallel speed-up} by using the moving-average based estimator with mini-batching. Our theoretical analysis also exhibits new insights for improving the practical implementation by sampling the batches of equal size for the outer and inner levels. Numerical experiments on AP maximization, NCA and $p$-norm push corroborate some aspects of the theory.
\end{abstract}
 \section{Introduction}
 A fundamental problem in machine learning (ML) that has been studied extensively is the empirical risk minimization (ERM), whose objective is a sum of individual losses on training examples, i.e., 
 \begin{align*}
 	\min_{\w\in\Omega}F(\w),\quad F(\w):=\frac{1}{n}\sum_{\z_i\in\D}\ell(\w; \z_i),
 \end{align*} 
where $\w$ and $\Omega$ denotes the model parameter and its domain ($\Omega\subseteq \R^d$), $\D$ denotes the training set of $n$ examples,  and $\z_i$ denotes an individual data. However, ERM may hide the complexity of individual loss and gradient computation in many interesting measures/objectives
. Instead, in this paper we study a new family of problems that aims to optimize the following compositional objective:
 \begin{align}\label{eqn:FCC}
 	\min_{\w\in\Omega}F(\w),\quad F(\w) := \frac{1}{n}\sum_{\z_i\in\D}f_i(g(\w; \z_i, \S_i)),
 \end{align}
 where $g:\Omega\mapsto\mathcal{R}$ \footnote{$\mathcal R$ refers to the $d'$-dimensional codomain of $g$ ($d'\geq 1$).}, $f_i:\mathcal{R} \mapsto \R$, and $\S_i$ denotes another (finite or infinite \footnote{If $\S_i$ is a finite set, we define $g(\w;\z_i,\S_i) = \frac{1}{|\S_i|}\sum_{\xi_{ij}\in\S_i} g(\w;\z_i,\xi_{ij})$; If $\S_i$ is an infinite set, we define $g(\w;\z_i,\S_i) = \E\left[g(\w;\z_i,\xi_i)\mid \z_i\right]$, where $\xi_i\in\S_i$.}) set of examples that could be either dependent or independent of $\z_i$. We give an example for each case: 1) In the bipartite ranking problem, $\D$ represents the positive data while $\S = \S_i$ represents the negative data; 2) In the robust learning problem (e.g. the invariant logistic regression in \citealt{hu2020biased}), $\D$ represents the training data set while $\S_i$ denotes the set of perturbed observations for data $\z_i\in\D$, where $\S_i$ depends on $\z_i$. We are particularly interested in the case that set $\S_i$ is infinite or contains a large number of items, and assume that an unbiased stochastic estimators of $g$ and $\nabla g$ can be computed via sampling from $\S_i$. We refer to~(\ref{eqn:FCC}) as finite-sum coupled compositional optimization  (FCCO) and its objective as finite-sum coupled compositional risk (FCCR), where for each data $\z_i$ the risk $f_i(g(\w; \z_i, \S_i))$ is of a compositional form such that $g$ couples each $\z_i$ and items in $\S_i$. It is notable that $f_i$ could be stochastic or has a finite-sum structure that depends on a large set of items. For simplicity of presentation and discussion, we focus on the case that $f_i$ is a simple deterministic function whose value and gradient can be easily computed, which covers many interesting objectives  of interest. The algorithms and analysis can be extended to the case that  $\{f_i, \nabla f_i\}$ are estimated by their unbiased stochastic versions using random samples (discussed in Appendix~\ref{sec:ext}). 
 
 {\bf Applications of FCCO.} The Average Precision (AP) maximization problem studied in \citet{qi2021stochastic} is an example of FCCO. Nevertheless, we notice that the application of FCCO is much broader beyond the AP maximization, including but are not limited to $p$-norm push optimization~\citep{rudin2009p}, listwise ranking objectives~\citep{ListNet,ListMLE} (e.g. ListNet, ListMLE, NDCG), neighborhood component analysis (NCA)~\citep{goldberger2004neighbourhood}, deep survival analysis~\citep{deepsurv}, deep latent variable models~\citep{DBLP:conf/icml/GuuLTPC20}, etc. We postpone the details of some of these problems to Section~\ref{sec:exp} and~\ref{sec:app}. We would like to emphasize that efficient stochastic algorithms for these problems are lacking or under-developed when the involved set $\S_i$ is big and/or the predictive model is nonconvex. 
 
 In this paper, we propose the  \underline{\bf S}tochastic \underline{\bf O}ptimization of the \underline{\bf X} objectives listed in Sections~\ref{sec:exp} and~\ref{sec:app} (SOX) and establish its convergence guarantees for several classes of functions. 
 
 \section{Related Work}
 
 In this section, we connect the FCCO problem to Conditional Stochastic Optimization (CSO) and Stochastic Compositional Optimization (SCO) in the literature and discuss the limitations of existing algorithms for FCCO. Then, we position SOX in previous studies and list our contributions.
 
 \subsection{Conditional Stochastic Optimization (CSO)}
 
 The most straightforward approach to solve the FCCO problem in \eqref{eqn:FCC} is to compute the gradient $\nabla F(\w) = \frac{1}{n}\sum_{\z_i\in\D} \nabla g(\w;\z_i,\S_i) \nabla f_i(g(\w;\z_i,\S_i))$ and then use the gradient descent method. However, it can be seen that computing the gradient $\nabla F(\w)$ is very expensive, if not infeasible, when $|\D|$ or $|\S_i|$ is large. Thus, a natural idea is to sample mini-batches $\B_1\subset \D$ and $\B_{i,2}\subset \S_i$ and compute a stochastic gradient of the form $\v = \frac{1}{|\B_1|}\sum_{\z_i\in\B_1} \nabla g(\w;\z_i,\B_{i,2}) \nabla f_i(g(\w;\z_i,\B_{i,2}))$ and update the parameter as $\w\leftarrow \w - \eta \v$, where $g(\w; \z_i, \B_{i,2})\coloneqq \frac{1}{|\B_{i,2}|}\sum_{\xi_{ij}\in\B_{i,2}}g(\w;\z_i,\xi_{ij})$. The resulting algorithm is named as biased stochastic gradient descent (BSGD) in \citet{hu2020biased} and the convergence guarantees of BSGD under different assumptions are established. 
 Actually, \citet{hu2020biased} study the more general problem $\E_{\xi}f_{\xi}(\E_{\zeta|\xi}[g_\zeta(\w; \xi)])$, which is referred to as conditional stochastic optimization (CSO) and show that BSGD has the optimal oracle complexity for the general CSO problem. The FCCO problem can be mapped to the CSO problem by $\z_i=\xi$, $\zeta\in\S_i$ and the only difference between these two is the \emph{finite-sum structure} of the outer-level function in FCCO. Unfortunately, BSGD requires unrealistically large batch sizes to ensure the convergence from the theoretical perspective (Please refer to columns 5 and 6 of Table~\ref{tab:comp}). As a comparison, our approach explicitly exploits the finite support of the outer level and leads to improved oracle complexity with mini-batch sizes $|\B_1| = O(1)$, $|\B_{i,2}| = O(1)$.
 
 \begin{table*}[t]
 	\caption{Summary of {\bf iteration complexity}  of different methods for different problems. ``NC" means non-convexity of $F$, ``C" means convexity of $F$,``SC" means strongly convex and ``PL" means Polyak-Lojasiewicz condition. Iteration complexity is the number of iterations to reduce the quantities (NC: $\E\|\nabla F(\w)\|$; C: $\E [F(\w) - F(\w^*)]$; SC (PL): $\E \|\w - \w^*\|_2^2$) below $\epsilon$.  ``N/A" means not applicable or not available. For the complexity of  our method, we omit constants that are independent of $n$ or the batch sizes. $^*$ denotes that additional assumptions (bounded $\Omega$, convex and monotone $f$) are needed and the the rate is for a weaker optimality measure. $\dagger$ assumes a stronger SC condition, which is imposed on a stochastic function instead of the original objective. We suppose $B_1 = |\B_1^t|$ and $B_2 = |\B_{i,2}^t|$ for simplicity. $^\diamond$ denotes that the bound is only for strongly convex FCCO problems (see Section~\ref{sec:scvx} in Appendix).}
 	\label{tab:comp}
 	\vskip 0.15in
 	\begin{center}
 		\scalebox{0.8}{
 			\begin{tabular}{ccccccc}
 				\toprule[0.02in]
 				Method & NC & C &  SC (PL) & \makecell{Outer Batch \\Size $|\B_1|$} & \makecell{Inner Batch \\Size $|\B_{i,2}|$}  & \makecell{Parallel \\Speed-up}  \\
 				\midrule[0.01in]
 				BSGD~\citep{hu2020biased}&$O(\epsilon^{-4})$ & $O\left(\epsilon^{-2}\right)$ &  $O\left(\mu^{-2}\epsilon^{-1} \right)^\dagger$ & 1 &  \makecell{$O(\epsilon^{-2})$ (NC) \\ $O(\epsilon^{-1})$ (C/SC)}  & N/A \\
 				SOAP~\citep{qi2021stochastic} &$O(n\epsilon^{-5})$&   -& - & 1 &1 & N/A\\
 				MOAP~\citep{wang2021momentum} & $O\left(\frac{n\epsilon^{-4}}{B_1}\right)$&   -& - & $B_1$ & 1 & Partial \\
 				\midrule[0.005in] \rowcolor{gray!40}
 				SOX/SOX-boost (this work)  & $O\left(\frac{n\epsilon^{-4}}{B_1B_2}\right)$ &   $O\left(\frac{n\epsilon^{-3}}{B_1B_2}\right)$  &  $O\left(\frac{n\mu^{-3}\epsilon^{-1}}{B_1B_2}\right)$& $B_1$ &$B_2$ & Yes\\\rowcolor{gray!40}
 				SOX ($\beta=1$) (this work) & - &  $O\left(\frac{n\epsilon^{-2}}{B_1}\right)^*$ & \textcolor{blue}{$\tilde{O}\left(\frac{n \mu^{-2} \epsilon^{-1}}{B_1}\right)$ $^\diamond$} & $B_1$ & $B_2$ & Partial\\
 				\bottomrule[0.02in]
 		\end{tabular}}
 	\end{center}
 \end{table*}

\subsection{Stochastic Compositional Optimization (SCO)}\label{sec:sco}
 
 A closely related class of problems: stochastic compositional optimization (SCO) has been extensively studied in the literature. In particular, the SCO problem with the finite support in the outer level is in the form of $F(\w) = \frac{1}{n}\sum_{\z_i\in\D}f_i(g(\w;\S))$, where $\S$ might be finite or not. The difference between FCCO and SCO is that the inner function $g(\w;\S)$ in SCO does not depend on $\z_i$ of the outer summation. The SCGD algorithm \citep{wang2017stochastic} is a seminal work in this field, which tracks the unknown $g(\w_t;\S)$ with an auxiliary variable $u_t$ that is updated by the exponential moving average $u\leftarrow (1-\gamma)u + \gamma g(\w;\B_2)$, $\gamma\in(0,1)$ based on an unbiased stochastic estimator $g(\w;\B_2)$ of $g(\w;\S)$, which circumvents the unrealistically large batch size required by the sample average approximation approach. For example, we can sample a mini-batch $\B_2 \subset \S$ and compute $g(\w;\B_2)\coloneqq \frac{1}{|\B_2|}\sum_{\xi_j\in\B_2} g(\w;\xi_j)$. Then, the stochastic estimator of $\nabla F(\w)$ can be computed as $\v_t = \frac{1}{|\B_2|}\sum_{\z_i\in\B_1}\nabla g(\w;\B_2)\nabla f_i(u)$. More recently, the NASA algorithm~\citep{Ghadimi2020AST} modifies the SCGD algorithm by adding the exponential moving average (i.e., the momentum) to the gradient estimator, i.e., $\v\leftarrow (1-\beta)\v + \beta \frac{1}{|\B_1|}\sum_{\z_i\in\B_1}\nabla g(\w;\B_2)\nabla f_i(u)$, $\beta\in(0,1)$, which improves upon the convergence rates of SCGD. When $f_i$ is convex and monotone ($d'=1$) and $g$ is convex, \citet{zhang2020optimal} provide a more involved analysis for the two-batch SCGD\footnote{In the original SCGD algorithm~\citep{wang2017stochastic}, they use the same batch $\B_2$ to update $u$ by $g(\w;\B_2)$ and to compute the gradient estimator by $\nabla g(\w;\B_2)$. In the work of \citet{zhang2020optimal}, they analyze the two-batch version SCGD which uses independent batches $\B_2$ and $\B_2'$ for $g(\w;\B_2)$ and $\nabla g(\w;\B_2')$. The two-batch version with independent $\B_2$, $\B_2'$ is definitely less efficient, but it considerably simplies the analysis.} in its primal-dual equivalent form and derive the optimal rate for a special class of problems that inner function is convex while the outer function is convex and monotone.
 
 {\bf SCO reformulation of FCCO.} Given the union data set $\S = \S_1\cup\cdots\S_i\cdots\cup\S_n$, we can define $\bg(\w;\S)=[g(\w;\z_1,\S_1)^\top, \ldots, g(\w;\z_n,\S_n)^\top]^\top\in\R^{nd'}$ and $\hat{f}_i(\cdot) \coloneqq f_i(\I_i \cdot)$, $\I_i \coloneqq \left[0_{d\times d},\dotsc, I_{d\times d},\dotsc, 0_{d\times d}\right]\in\R^{d'\times nd'}$ (the $i$-th block in $\I_i$ is the identity matrix while the others are zeros), the FCCO problem $F(\w) = \frac{1}{n}\sum_{i=1}^n f_i(g(\w;\z_i,\S_i))$ can be reformulated as an SCO problem $F(\w) = \frac{1}{n}\sum_{i=1}^n \hat{f}_i(\bg(\w;\S))$ such that the existing algorithms for the SCO problem can be directly applied to our FCCO problem. Unfortunately, applying SCGD and NASA on the FCCO problem via the SCO reformulation need $n$ oracles for the inner function $\bg(\w;\S)$ (one oracle for each $g(\w;\z_i,\S_i)$) and update all $n$ components of $\u = [u_1,\dotsc,u_n]^\top$ at any time step $t$ even if we only sample one data point $\z_i\in \D$ in the outer level, which could be expensive or even infeasible.  
 
 Apart from adopting the rather na\"ive SCO reformulation, we can also alter the algorithm according to the special structure of FCCO. SCGD and NASA algorithms could be better tailored for the FCCO problem if it selectively samples $\B_{i,2}$ and selectively updates those coordinates $u_i$ for those sampled $\z_i\in\B_1$ at each time step, instead of sampling $\B_{i,2}$ for all $\z_i\in\D$ and update all $n$ coordinates of $\u= [u_1^\top,\dotsc, u_n^\top]^\top$. Formally, the update rule of $\u = [u_1^\top ,\dotsc, u_n^\top]^\top$ can be expressed as 
 \begin{align}\label{eq:update_v1}
 	u_i \leftarrow \begin{cases}
 		(1-\gamma) u_i + \gamma g(\w;\z_i,\B_{i,2}), & i\in\B_1\\
 		u_i, & i\notin \B_1
 	\end{cases}.
 \end{align}
 The update rule above has been exploited by some recent works (e.g.~SOAP in ~\citealp{qi2021stochastic}) to solve the average precision (AP) maximization problem, which is a special case of FCCO. However, the convergence guarantees of these algorithms are only established for smooth nonconvex problem and do not enjoy the parallel speed-up by mini-batching. In this work, we build convergence theory for a broader spectrum of problems and show the parallel speed-up effect. Moreover, we ressolve several issues of existing approaches from the algorithmic and theoretical perspectives (See Table~\ref{tab:comp} and Section~\ref{sec:sox_comparison} for details).
 
 \subsection{Our Contributions}
 
 Our contributions can be summarized as follows.
 
 $\bullet$ On the convex and nonconvex problems, our SOX algorithm can guarantee the convergence but does not suffer from some limitations in previous methods such as the unrealistically large batch of BSGD~\citep{hu2020biased}, the two independent batches for oracles of the inner level in SCGD~\citep{zhang2020optimal}, and the possibly inefficient/unstable update rule in MOAP~\citep{wang2021momentum}.
 
 $\bullet$ On the smooth nonconvex problem, SOX has an improved rate compared to SOAP and enjoys a better dependence on $|\B_{i,2}|$ compared to MOAP.  
 
 $\bullet$ Beyond the smooth nonconvex problem, we also establish the convergence  guarantees of SOX for problems that $F$ is convex/strongly convex/PL, which are better than BSGD in terms of oracle complexity. 
 
 $\bullet$ Moreover, we carefully analyze how mini-batching in the inner and outer levels improve the worst-case convergence guarantees of SOX in terms of iteration complexity, i.e., the parallel speed-up effect. The theoretical insights are numerically verified in our experiments.
 
 \begin{algorithm}[t]
 	\caption{SOX($\w^0$,$\u^0$,$\v^0$,$\eta$, $\beta$, $\gamma$, $T$)}
 	\begin{algorithmic}[1]
 		\FOR{$t=1,\ldots,T$}
 		\STATE Draw a batch of samples $\B_1^t\subset \D$
 		\IF{$\z_i \in \B_1^t$} 
 		\STATE Update the estimator of function value $g_i(\w^{t})$ $$u^{t}_i =  (1-\gamma)u^{t-1}_i  + \gamma g(\w^{t};\z_i, \B_{i,2}^t)$$
 		\ENDIF 
 		\STATE Update the estimator of gradient $\nabla F(\w^{t})$ by
 		\begin{align*}
 			\v^{t} & =(1-\beta)\v^{t-1}  + \beta\frac{1}{B_1}\sum_{\z_i\in\B_1^t}\nabla g(\w^{t};\z_i,\B_{i,2}^t) \nabla f_i(\colorbox{gray!40}{$ u^{t-1}_i$}) 
 		\end{align*}
 		\STATE Update the model parameter $\w^{t+1} = \w^t - \eta_t \v^t$
 		\ENDFOR
 	\end{algorithmic}
 	\label{alg:sox}
 \end{algorithm}
 
 \section{Algorithm and Convergence Analysis}\label{sec:main}

{\bf Notations.} For machine learning applications, we let $\D=\{\z_1, \ldots, \z_n\}$ denote a set of training examples for general purpose, let $\w\in\Omega$ denote the model parameter (e.g., the weights of a deep neural network). Denote by $h_\w(\z)$ a prediction score of the model on the data $\z$. A function $f$ is Lipchitz continuous on the domain $\Omega$ if there exists $C>0$ such that $\norm{f(\w) - f(\w')}\leq C\norm{\w-\w'}$ for any $\w,\w'\in\Omega$, and is smooth if its gradient is Lipchitz continuous. A function $F$ is convex if it satisfies $F(\w)\geq F(\w')+\nabla F(\w')^{\top}(\w - \w')$ for all $\w, \w'\in\Omega$, is $\mu$-strongly convex if  there exists $\mu>0$ such that  $F(\w)\geq F(\w')+\nabla F(\w')^{\top}(\w - \w') + \frac{\mu}{2}\|\w-\w'\|^2$ for all $\w, \w'\in\Omega$. A smooth function $F$ is said to satisfy $\mu$-PL condition if there exists $\mu>0$ such that $\|\nabla F(\w)\|^2\geq \mu (F(\w) - \min_{\w} F(\w))$, $\w\in\Omega$. 
 
We make the following assumptions throughout the paper\footnote{The result in Theorem~\ref{thm:cvx_sox-simp} does not need $g_i$ to be smooth.}.
 
 \begin{asm}\label{asm:lip}
 	We assume that (i) $f_i(\cdot)$ is differentiable, $L_f$-smooth and $C_f$-Lipchitz continuous; (ii) $g(\cdot;\z_i,\S_i)$ is differentiable, $L_g$-smooth and $C_g$-Lipchitz continuous for any $\z_i\in\D$; (iii) $F$ is lower bouned by $F^*$.
 \end{asm}
 {\bf Remark:} If the assumption above is satisfied, it is easy to verify that $F(\w)$ is $L_F$-smooth, where $L_F\coloneqq C_fL_g + C_g^2 L_f$ (see Lemma 4.2 in \citealt{zhang2021multilevel}). The assumption that $f_i$ is smooth and Lipchitz continuous seems to be strong. However, the image of $g_i$ is bounded on domain $\Omega$ in many applications (otherwise there might be a numerical issue), hence $f_i$ is smooth and Lipchitz continuous in a bounded domain is enough for our results. 
 
 \subsection{A Better Stochastic Algorithm for FCCO}\label{sec:sox_comparison}
 
 We follow the idea of tailoring SCGD/NASA to solve the FCCO problem by \emph{selective sampling} and \emph{selective update} as described in Section~\ref{sec:sco}. Next, we thoroughly discuss the relation of our SOX algorithm to the existing algorithms SOAP and MOAP for the FCCO problem.  
 
 \begin{figure*}[h]
 	\centering
 	\begin{tikzpicture}[->, >=stealth', node distance=2.5cm, every state/.style={thick, fill=gray!10}, initial text=$ $]
 		\node[rectangle] at (-2, 0) (1) {\makecell{SCGD\\\citep{wang2017stochastic}}};
 		\node[rectangle] at (3, 0) (2) {\makecell{NASA\\\citep{Ghadimi2020AST}}};
 		\node[rectangle] at (8, -1.5) (3) {\bf SOX (this work)};
 		\node[rectangle] at (8, 0) (4) {\makecell{MOAP\\\citep{wang2021momentum}}};
 		\node[rectangle] at (3, -1.5) (5) {\makecell{SOAP\\\citep{qi2021stochastic}}};
 		\node[rectangle] at (-2, -1.5) (6) {\makecell{BSGD\\\citep{hu2020biased}}};
 		\draw (1) edge[above] node{\small +GM} (2)
 		(2) edge[right] node{\small +SS, SU} (3)
 		(2) edge[above] node{\small +SS} (4)
 		(5) edge[above] node{\small $\gamma=1$} (6)
 		(5) edge[above] node{\small +GM} (3)
 		(1) edge[right] node{\small +SS, SU} (5);
 	\end{tikzpicture}
 	\caption{The algorithmic relationship among our SOX and the existing algorithms. ``GM'' refers to the \underline{g}radient \underline{m}omentum; ``SS'' refers to the property that the algorithm only needs to \underline{s}electively \underline{s}ample $\B_{i,2}$ for those $\z_i\in\B_1$ at each time step; ``SU'' refers to the property that the algorithm only needs to \underline{s}electively \underline{u}pdate those coordinates $u_i$ for $\z_i\in\B_1$. }
 	\label{fig:state_diagram}
 \end{figure*}
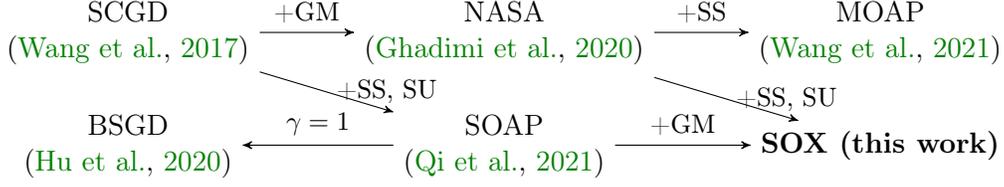
 
 SOAP algorithm~\citep{qi2021stochastic} combines~\eqref{eq:update_v1} with the gradient step $\w\leftarrow \w - \eta \v$, $\v \leftarrow \frac{1}{|\B_1|}\sum_{\z_i\in\B_1}\nabla g(\w;\z_i,\B_{i,2}) \nabla f_i(u_i)$. \citet{wang2021momentum} attempted to do the same adaptation for NASA~\citep{Ghadimi2020AST} by an algorithm called MOAP, which applies the uniform random sparsification~\citep{wangni2018gradient} or the uniform randomized block coordinate sampling~\citep{nesterov2012efficiency} to the whole $\bg(\w)$ and derives the improved rate compared to SOAP. To be specific, the update rule of $\u = [u_1,\dotsc, u_n]^\top$ in MOAP is
 \begin{align}\label{eq:update_v2}
 	u_i \leftarrow \begin{cases}
 		(1-\gamma) u_i + \gamma \frac{n}{|\B_1|}g(\w;\z_i,\B_{i,2}), & i\in\B_1\\
 		(1-\gamma)u_i, & i\notin \B_1.
 	\end{cases}
 \end{align}
 It is worth mentioning that the convergence guarantees for SOAP and MOAP are only established for the smooth nonconvex problems. Besides, SOAP and MOAP are only analyzed when $|\B_1^t| = 1$.  The update rule \eqref{eq:update_v2} of MOAP also has several extra drawbacks: a) It requires extra costs to update all $u_i$ at each iteration, while \eqref{eq:update_v1} only needs to update $u_i$ for the sampled $\z_i\in\B_1$; b) For the large-scale problems (i.e., $n$ is large), multiplying $g(\w; \z_i,\B_{i,2})$ by $\frac{n}{|\B_1|}$ might lead to numerical issue; c) Due to the property of random sparisifcation/block coordinate sampling (see 
 Proposition 3.5 in \citealt{khirirat2018distributed}), it does not enjoy any benefit of mini-batch $\B_{i,2}$ in terms of iteration complexity. 
 
 {\bf Main idea of SOX:} We make subtle modifications on SOAP --- 1) directly adding the gradient momentum; 2)  using $u_i^{t-1}$ instead of $u_i^t$ in step 6 of Algorithm~\ref{alg:sox}, which are crucial for us to improve the convergence rate. In particular, taking expectation of the estimation error $\|u_i^{t-1} - g(\mathbf w^t;\z_i,\S_i)\|^2$ over the randomness in $\mathbf z_i\in\mathcal B_1^t$ (due to independence between $\mathbf u^{t-1}$ and the randomness in $\mathbf z_i\in\mathcal B_1^t$) leads to bounding the average error $\frac{1}{n}\sum_{\z_i\in\D} \norm{u_i^{t-1} - g(\mathbf w^t;\z_i,\S_i)}^2$ over all $\mathbf z_i\in\mathcal D$, which can be decomposed into $\frac{1}{n}\sum_{\z_i\in\D}\norm{u_i^t - u_i^{t-1}}^2$ and $\frac{1}{n}\sum_{\z_i\in\D}\norm{u_i^t - g(\mathbf w^t;\z_i,\S_i)}^2$, where the latter term is bounded as in Lemma~\ref{lem:fval_recursion} and the first term is cancelled with the highlighted negative term in Lemma~\ref{lem:fval_recursion}.

 \subsection{Improved Rate for the Nonconvex Problems}
 
 In this subsection, we present the convergence analysis for the smooth nonconvex problems. We will highlight the key differences from the previous analysis. We use the following assumption, which is also used in previous works~\citep{qi2021stochastic,wang2021momentum}.
 \begin{asm}\label{asm:var}
 	We assume that $\E[\|g(\w; \z_i,\xi_i) - g(\w;\z_i,\S_i)\|^2\mid \z_i]\leq \sigma^2$ and $\E[\|\nabla g(\w; \z_i,\xi_i) - \nabla g(\w;\z_i,\S_i)\|^2\mid \z_i]\leq \zeta^2$ for any $\w$, $\z_i\in\D$, and $\xi_i\in\S_i$.
 \end{asm}
We aim to find the approximate stationary points.
 \begin{definition}
 	$\w$ is an $\epsilon$-stationary point if $\Norm{\nabla F(\w)}\leq \epsilon$. 
 \end{definition}
The recursion for the variance of inner function value estimation $\Xi_t\coloneqq \frac{1}{n}\norm{\u^t - \bg(\w^t;\S)}^2$ is crucial for our analysis. 
 \begin{lemma}\label{lem:fval_recursion}
 	If $\gamma\leq 1/5$, function value variance $\Xi_t\coloneqq \frac{1}{n}\norm{\u^t - \bg(\w^t;\S)}^2$ can be bounded as
 	\begin{align*}
 		\E\left[\Xi_{t+1}\right] &\leq \left(1-\frac{\gamma B_1}{4n}\right)\E\left[\Xi_t\right] + \frac{5n\eta^2C_g^2\E\left[\norm{\v^t}^2\right]}{\gamma B_1} + \frac{2\gamma^2\sigma^2B_1}{nB_2}- \colorbox{gray!40}{$\frac{1}{4n}\E\left[\sum_{\z_i\in\B_1^t}\norm{u_i^{t+1} - u_i^t}^2\right]$}.
 	\end{align*}	
 \end{lemma}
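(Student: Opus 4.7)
The main strategy is to expand $\Xi_{t+1}$ coordinate-by-coordinate over $i\in\{1,\dots,n\}$, split into the refreshed indices $i\in\B_1^{t+1}$ and the frozen indices $i\notin\B_1^{t+1}$, take iterated expectations first over the inner mini-batches $\{\B_{i,2}^{t+1}\}$ and then over the outer mini-batch $\B_1^{t+1}$, and finally bridge from $g(\w^{t+1};\z_i,\S_i)$ back to $g(\w^t;\z_i,\S_i)$ via Young's inequality combined with the $C_g$-Lipschitz property from Assumption~\ref{asm:lip}.

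For a refreshed index $i\in\B_1^{t+1}$, I would start from the EMA decomposition $u_i^{t+1}-g(\w^{t+1};\z_i,\S_i) = (1-\gamma)(u_i^t-g(\w^{t+1};\z_i,\S_i)) + \gamma(g(\w^{t+1};\z_i,\B_{i,2}^{t+1})-g(\w^{t+1};\z_i,\S_i))$ and use conditional unbiasedness of $\B_{i,2}^{t+1}$ together with the variance bound $\sigma^2/B_2$ from Assumption~\ref{asm:var} to obtain the identity $\E_{\B_{i,2}^{t+1}}\|u_i^{t+1}-g(\w^{t+1};\z_i,\S_i)\|^2 = (1-\gamma)^2\|u_i^t-g(\w^{t+1};\z_i,\S_i)\|^2 + \gamma^2\sigma^2/B_2$. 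An entirely parallel calculation based on $u_i^{t+1}-u_i^t=\gamma(g(\w^{t+1};\z_i,\B_{i,2}^{t+1})-u_i^t)$ yields $\E_{\B_{i,2}^{t+1}}\|u_i^{t+1}-u_i^t\|^2 = \gamma^2\|u_i^t-g(\w^{t+1};\z_i,\S_i)\|^2 + \gamma^2\sigma^2/B_2$. The key trick that manufactures the advertised negative term is to add $\tfrac14$ of the second identity into the first: the combined coefficient on $\|u_i^t-g(\w^{t+1};\z_i,\S_i)\|^2$ becomes $(1-\gamma)^2+\gamma^2/4 = 1-2\gamma+5\gamma^2/4$, which under $\gamma\le 1/5$ (so that $5\gamma^2/4\le\gamma/4$) is bounded above by $1-7\gamma/4$, and rearranging exposes a $-\tfrac14\E_{\B_{i,2}^{t+1}}\|u_i^{t+1}-u_i^t\|^2$ term on the right-hand side with noise contribution $\tfrac54\gamma^2\sigma^2/B_2$.

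Averaging next over $\B_1^{t+1}$, each $i$ participates with probability $B_1/n$, so the refreshed-case per-$i$ contraction $(1-7\gamma/4)$ blends with the frozen-case factor $1$ into the coefficient $1-7\gamma B_1/(4n)$ on $\frac{1}{n}\sum_i\|u_i^t-g(\w^{t+1};\z_i,\S_i)\|^2$, while the negative term emerges as $-\tfrac{1}{4n}\E\sum_{i\in\B_1^{t+1}}\|u_i^{t+1}-u_i^t\|^2$ after unwinding the conditional expectation. Finally, applying Young's inequality with parameter $\alpha=\gamma B_1/(4n)$ and using $C_g$-Lipschitzness with $\|\w^{t+1}-\w^t\|=\eta\|\v^t\|$ gives $\|u_i^t-g(\w^{t+1};\z_i,\S_i)\|^2 \le (1+\alpha)\|u_i^t-g(\w^t;\z_i,\S_i)\|^2 + (1+1/\alpha)C_g^2\eta^2\|\v^t\|^2$; a short algebraic check shows $(1-7\gamma B_1/(4n))(1+\alpha)\le 1-\gamma B_1/(4n)$, and the $\|\v^t\|^2$ coefficient is bounded by $5nC_g^2\eta^2/(\gamma B_1)$ whenever $\gamma B_1\le n$. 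The main obstacle is to align the $\tfrac14$ auxiliary weight with the Young's parameter $\alpha$ so that the three advertised constants — the $\gamma B_1/(4n)$ contraction, the $5n/(\gamma B_1)$ gradient-norm prefactor, and the $\tfrac{1}{4n}$ negative-term weight — all fall out simultaneously; once the $\tfrac14$ combination is spotted, the remainder is careful bookkeeping.
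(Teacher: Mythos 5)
Your proposal is correct, and it reaches the lemma by a genuinely different, more elementary route than the paper. The paper interprets step 4 of SOX as one step of stochastic block coordinate descent on the strongly convex problem $\min_{\u}\tfrac12\Norm{\u-\bg(\w^{t+1})}^2$ and follows the Dang--Lan style analysis (three-point inequality, splitting the stochastic block gradient into mean plus noise, and careful bookkeeping of the $\tfrac{\gamma+1}{8\gamma}$ coefficients) to extract both the contraction $1-\tfrac{\gamma B_1}{2n}$ and the highlighted negative term before the final change of argument from $\w^{t+1}$ to $\w^t$. You instead exploit that the ``objective'' is exactly quadratic: the conditional expansions of $\E\Norm{u_i^{t+1}-g(\w^{t+1};\z_i,\S_i)}^2$ and $\E\Norm{u_i^{t+1}-u_i^t}^2$ over $\B_{i,2}^{t+1}$ are exact bias--variance decompositions, and adding one quarter of the second to the first gives the coefficient $(1-\gamma)^2+\gamma^2/4\le 1-\tfrac74\gamma$ under $\gamma\le 1/5$, which manufactures the $-\tfrac14\E\Norm{u_i^{t+1}-u_i^t}^2$ term directly. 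From there the two proofs coincide: the marginal inclusion probability $B_1/n$ blends the refreshed and frozen branches, and the same Young-plus-Lipschitz bridge with $\alpha=\gamma B_1/(4n)$ (valid since $\gamma B_1\le n$, as you note) yields the $\tfrac{5n\eta^2 C_g^2}{\gamma B_1}\E\Norm{\v^t}^2$ term and the final contraction $1-\tfrac{\gamma B_1}{4n}$. Your route avoids the BCD/Bregman machinery entirely and in fact gives slightly sharper constants (a noise term $\tfrac54\gamma^2\sigma^2 B_1/(nB_2)$ versus the paper's $2\gamma^2\sigma^2 B_1/(nB_2)$, and a stronger pre-Young contraction $1-\tfrac{7\gamma B_1}{4n}$ versus $1-\tfrac{\gamma B_1}{2n}$); what the paper's viewpoint buys is a template that does not rely on the quadratic structure and connects to the block coordinate descent literature it cites. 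Two cosmetic points: the displayed relations become inequalities, not identities, once you replace the batch variance $\E\Norm{g(\w^{t+1};\z_i,\B_{i,2}^{t+1})-g(\w^{t+1};\z_i,\S_i)}^2$ by its bound $\sigma^2/B_2$ (harmless, since the term you keep negative is the exact one); and your negative term is indexed by the batch actually used in the $\u^t\to\u^{t+1}$ update, which matches the paper's proof even though the lemma statement writes $\B_1^t$ --- an index inconsistency inherited from the paper, not a gap in your argument.
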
	
Instead of sampling a singleton $\B_1^t=\{i_t\}$ at each iteration and bounding $\sum_t[\|u_{i_t}^t - g(\w^t;\z_{i_t},\S_{i_t})\|^2]$ for $\z_{i_t}$ in \citet{qi2021stochastic} \footnote{Please refer to the comments above (27) in \citet{wang2021momentum} for the issue of bounding $\sum_t[\|u_{i_t}^t - g(\w^t;\z_{i_t},\S_{i_t})\|^2]$.}, Lemma~\ref{lem:fval_recursion} bounds $\sum_t\|\u^t - \bg(\w^t;\S)\|^2$ that includes all coordinates of $\u^t$ at each iteration. To build the recursion, we consider a strongly convex minimization problem $	\min_{\u}\frac{1}{2}\|\u - \bg(\w^t;\S)\|^2$ that is equivalent to 
 \begin{align}\label{eqn:u}
 	\min_{\u=[u_1,\dotsc, u_n]^\top}\frac{1}{2}\sum_{\z_i\in\D}\|u_i - g(\w^t;\z_i,\S_i)\|^2.
 \end{align}
 Then, the step 4 in SOX can be viewed as stochastic block coordinate descent algorithm applied to \eqref{eqn:u}, i.e., 
 \begin{align}\label{eq:bcd}
 	u^t_i = \begin{cases}  u^{t-1}_i - \gamma\left(u^{t-1}_i - g(\w^t;\z_i,\B_{i,2}^t)\right),& \z_i\in\B_1^t\\ u^t_i, & \z_i\notin \B_1^t,\end{cases}
 \end{align}
 where $u^{t-1}_i - g(\w^t;\z_i,\B_{i,2}^t)$ is the stochastic gradient of the $i$-th coordinate in the objective~(\ref{eqn:u}). This enables to us to use the proof technique of stochastic block coordinate descent methods to build the recursion of $\Xi_t$ and derive the improved rate compared to previous works listed in Table~\ref{tab:comp}.  
 
 By combining the lemma above with Lemma~\ref{lem:nonconvex_starter} and Lemma~\ref{lem:grad_recursion} in the supplement, we prove the convergence to find an $\epsilon$-stationary point, as stated in the following theorem. 
 \begin{theorem}\label{thm:sox}
 	Under Assumption~\ref{asm:lip} and \ref{asm:var}, SOX~(Algorithm~\ref{alg:sox}) with $\beta=O(\min\{B_1,B_2\}\epsilon^2)$, $\gamma = O(B_2\epsilon^2)$, $\eta = \min\left\{\frac{\beta}{4 L_F}, \frac{\gamma B_1}{30L_f n C_1 C_g}\right\}$ can find an $\epsilon$-stationary point in $$T=O\left(\max\left\{\frac{n}{B_1B_2\epsilon^4}, \frac{1}{\min\{B_1, B_{2}\}\epsilon^4}\right\}\right)$$ iterations. 
 \end{theorem}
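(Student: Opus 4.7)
The plan is to construct a Lyapunov potential that couples the objective suboptimality, the gradient estimator error, and the inner function-value estimation error $\Xi_t$, and then show it decreases by $\Omega(\eta)\|\nabla F(\w^t)\|^2$ per step up to lower-order noise terms. Specifically, I would work with
\[
\Phi_t \;=\; \E[F(\w^t) - F^*] \;+\; c_1\,\E\|\v^t - \nabla F(\w^t)\|^2 \;+\; c_2\,\E[\Xi_t],
\]
where $c_1, c_2$ are to be chosen based on $\beta$, $\gamma$, $\eta$, $B_1$, $B_2$, $n$ and the Lipschitz/smoothness constants. The three ingredients feeding into $\Phi_t$ are: a descent inequality from $L_F$-smoothness of $F$ (this is what I expect Lemma~\ref{lem:nonconvex_starter} to provide), a recursion on the gradient estimator error (Lemma~\ref{lem:grad_recursion}), and the recursion on $\Xi_t$ already established in Lemma~\ref{lem:fval_recursion}.

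The descent lemma gives $\E[F(\w^{t+1})] \leq \E[F(\w^t)] - \eta\,\E\langle\nabla F(\w^t),\v^t\rangle + \tfrac{L_F\eta^2}{2}\E\|\v^t\|^2$, and after applying the identity $2\langle a,b\rangle = \|a\|^2+\|b\|^2-\|a-b\|^2$ one obtains a term $-\tfrac{\eta}{2}\E\|\nabla F(\w^t)\|^2$ plus $\tfrac{\eta}{2}\E\|\v^t - \nabla F(\w^t)\|^2$ and a controlled $\|\v^t\|^2$ contribution. For the gradient estimator error, I would expand $\v^{t+1} - \nabla F(\w^{t+1})$ using the momentum update, the Lipschitzness of $\nabla f_i$, and the smoothness of $\nabla g$, to get a recursion of the shape
\[
\E\|\v^{t+1} - \nabla F(\w^{t+1})\|^2 \;\leq\; (1-\beta)\E\|\v^t - \nabla F(\w^t)\|^2 + O\!\left(\tfrac{\beta^2(\sigma^2+\zeta^2)}{B_1 B_2}\right) + O(\beta\,\Xi_t) + O\!\left(\tfrac{\eta^2 L_F^2}{\beta}\right)\E\|\v^t\|^2.
\]
The $\Xi_t$ term here is precisely what justifies $\gamma=\Theta(B_2\epsilon^2)$, $\beta=\Theta(\min\{B_1,B_2\}\epsilon^2)$: $c_2$ must be large enough that the $(1-\gamma B_1/(4n))$ contraction in Lemma~\ref{lem:fval_recursion} absorbs the $O(\beta)\Xi_t$ pickup from the gradient recursion, which forces $c_2 \asymp n\beta/(\gamma B_1)$.

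The main obstacle is balancing the $\|\v^t\|^2$ contributions and the $\sum_{\z_i\in\B_1^t}\|u_i^{t+1}-u_i^t\|^2$ pieces. The descent inequality carries a positive $\eta^2 L_F$ multiple of $\E\|\v^t\|^2$; the gradient error recursion carries another $O(\eta^2/\beta)$ multiple; and the $\Xi_t$ recursion contributes $c_2\cdot\tfrac{5n\eta^2 C_g^2}{\gamma B_1}\E\|\v^t\|^2$. These must all be dominated by the coefficient of $-\eta\E\|\v^t\|^2$ one can extract, which is what pins down $\eta = \min\{\beta/(4L_F),\ \gamma B_1/(30 L_f n C_1 C_g)\}$. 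Crucially, the highlighted negative term $-\tfrac{1}{4n}\E\sum_{\z_i\in\B_1^t}\|u_i^{t+1}-u_i^t\|^2$ in Lemma~\ref{lem:fval_recursion} is exactly what cancels the analogous positive term that appears when bounding $\|\v^t - \nabla F(\w^t)\|^2$ via the use of $u_i^{t-1}$ in step~6 of Algorithm~\ref{alg:sox}; without it, the coefficients cannot be closed up.

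After these cancellations, telescoping $\Phi_{T+1} - \Phi_1 \leq -\tfrac{\eta}{4}\sum_{t=1}^T \E\|\nabla F(\w^t)\|^2 + T\cdot(\text{noise floor})$ yields
\[
\tfrac{1}{T}\sum_{t=1}^T \E\|\nabla F(\w^t)\|^2 \;\lesssim\; \tfrac{\Phi_1}{\eta T} \;+\; \tfrac{c_1\beta(\sigma^2+\zeta^2)}{B_1 B_2} \;+\; \tfrac{c_2\gamma^2\sigma^2 B_1}{n B_2 \eta}.
\]
Plugging $\beta=\Theta(\min\{B_1,B_2\}\epsilon^2)$, $\gamma=\Theta(B_2\epsilon^2)$ and $\eta=\Theta(\min\{\beta, \gamma B_1/n\})$ makes each noise term $O(\epsilon^2)$; then requiring $\Phi_1/(\eta T)\leq\epsilon^2$ forces
\[
T \;=\; O\!\left(\max\!\left\{\tfrac{n}{B_1 B_2 \epsilon^4},\ \tfrac{1}{\min\{B_1,B_2\}\epsilon^4}\right\}\right),
\]
which is the claimed complexity. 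Finally, by Jensen's inequality this guarantees an $\epsilon$-stationary point at a randomly sampled iterate.
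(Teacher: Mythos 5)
Your proposal follows essentially the same route as the paper's proof: the paper combines Lemma~\ref{lem:nonconvex_starter}, Lemma~\ref{lem:grad_recursion} and Lemma~\ref{lem:fval_recursion} with weights $1$, $\eta/\beta$ and $\frac{20L_f^2C_1^2 n\eta}{\gamma B_1}$ into the Lyapunov function $\Phi_t = F(\w^t)-F^*+\frac{\eta}{\beta}\Delta_t+\frac{20L_f^2C_1^2 n\eta}{\gamma B_1}\bigl(1-\frac{\gamma B_1}{4n}\bigr)\Xi_t$, imposes $\gamma\leq\frac{5n}{3B_1}\beta$ so that the highlighted negative $\sum_{\z_i\in\B_1^t}\|u_i^{t+1}-u_i^t\|^2$ term cancels the positive one from the gradient recursion, pins down $\eta$ through the $\|\v^t\|^2$ coefficient balance, and telescopes with the stated $\beta,\gamma$ — exactly your plan. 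The one slip is your scaling $c_2\asymp n\beta/(\gamma B_1)$: since the $\Xi$ pickup enters as $c_1\cdot O(\beta)\,\Xi$ with $c_1\asymp\eta/\beta$, the correct weight is $c_2\asymp n\eta/(\gamma B_1)$ (with $n\beta/(\gamma B_1)$ the $\gamma^2\sigma^2$ noise term is over-weighted in the regime $\eta\asymp\gamma B_1/n\ll\beta$), but this is a cosmetic fix that your own absorption argument already dictates.
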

 {\bf Remark:} The above theory suggests that given a budget on the mini-batch size $B_1+B_2=B$, the best value of $B_1$ is $B_1=B/2$. We will verify this result in experiments.  
 
 \subsection{Improved Rate for (Strongly) Convex Problems}
 
 In this subsection, we prove improved rates for SOX for convex and strongly convex objectives compared to previous work BSGD~\citep{hu2020biased}. One might directly analyze SOX with different decreasing step sizes for convex and strongly convex objectives separately as in~\citet{hu2020biased}. However, to our knowledge, this strategy does not yield an optimal rate for strongly convex functions. To address this challenge, we provide a unified algorithmic framework for both convex and strongly convex functions and derive the improved rates. The idea is to use the stagewise framework given in Algorithm~\ref{alg:sox-boost} to boost the convergence. Our strategy is to prove an improved rate for an objective that satisfies a $\mu$-PL condition $\|\nabla F(\w)\|^2\geq\mu(F(\w) - F(\w^*))$, 
 where $\w^*$ is a global minimum. Then, we use this result to derive the improved rates for (strongly) convex objectives. 
 
 \begin{theorem}\label{thm:sox-boost-simp}
 	Assume $F$ satisfying the PL condition, by setting $\epsilon_k=O(1/2^{k-1})$, $\beta_k=O(\eta_k), \gamma_k=O(\frac{n\eta_k}{B_1}), T_k=O(\frac{1}{\mu\eta_k}), \eta_k=O(\min(\mu\min(B_1, B_2)\epsilon_k, \frac{\mu B_1B_2\epsilon_k}{n}))$, and  $K=\log(1/\epsilon)$,  SOX-boost ensures that $\E[F(\w^K) - F(\w^*)]\leq \epsilon$, which implies a total iteration complexity of $T=O(\max(\frac{n}{\mu^2 B_1B_2\epsilon}, \frac{1}{\mu^2\min(B_1, B_2)\epsilon})).$
 \end{theorem}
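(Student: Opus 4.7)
The plan is to prove this via a standard stagewise (restart-based) argument that leverages the $\mu$-PL condition. At stage $k$, SOX-boost runs SOX for $T_k$ iterations using step sizes $(\eta_k,\beta_k,\gamma_k)$ tied to a target suboptimality $\epsilon_k=O(2^{-(k-1)})$, warm-started at the previous stage's output $\w^{k-1}$. The goal of the per-stage analysis is to show that $\E[F(\w^k)-F(\w^*)]\leq \epsilon_k$ whenever $\E[F(\w^{k-1})-F(\w^*)]\leq \epsilon_{k-1}=2\epsilon_k$. Given such a geometric contraction, composing $K=\log(1/\epsilon)$ stages immediately yields $\E[F(\w^K)-F(\w^*)]\leq\epsilon$, and the total iteration count is $T=\sum_{k=1}^K T_k$; since $\eta_k=\Theta(\epsilon_k)$ times batch/$\mu$ factors and $T_k=\Theta(1/(\mu\eta_k))=\Theta(2^{k-1}/(\mu^2\epsilon_k\cdot\text{factor}))$ with factor $\min(B_1,B_2)$ or $B_1B_2/n$, the geometric series is dominated by the last stage and gives exactly $O(\max\{n/(\mu^2 B_1 B_2 \epsilon),\ 1/(\mu^2\min(B_1,B_2)\epsilon)\})$.

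The heart of the argument is the per-stage linear convergence bound for SOX under PL. I would construct a Lyapunov function of the form
\begin{equation*}
\Phi_t \;=\; F(\w^t)-F(\w^*) \;+\; c_1\,\Xi_t \;+\; c_2\,\|\v^t-\nabla F(\w^t)\|^2,
\end{equation*}
where $c_1,c_2>0$ are tuned constants and $\Xi_t=\frac{1}{n}\|\u^t-\bg(\w^t;\S)\|^2$. Combining the standard smooth-descent inequality for $F$ (which produces a $-\eta\|\nabla F(\w^t)\|^2$ term and cross terms in $\|\v^t-\nabla F(\w^t)\|^2$ and $\Xi_t$), Lemma~\ref{lem:fval_recursion} for $\Xi_{t+1}$, and Lemma~\ref{lem:grad_recursion} for the gradient-tracking error $\|\v^t-\nabla F(\w^t)\|^2$, I would tune $c_1,c_2$ so that the cross terms are absorbed and the residual negative drift is of order $-\eta\|\nabla F(\w^t)\|^2 - \Theta(\gamma B_1/n)c_1 \Xi_t - \Theta(\beta) c_2 \|\v^t-\nabla F\|^2$, plus a stochastic noise term of size $O(\beta^2/B_1+\gamma^2 B_1/(nB_2)+\eta^2)$. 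Applying the PL inequality $\|\nabla F(\w^t)\|^2\geq 2\mu(F(\w^t)-F^*)$ converts the first negative term into $-\mu\eta(F(\w^t)-F^*)$, and with the parameter choices $\beta_k=\Theta(\eta_k)$, $\gamma_k=\Theta(n\eta_k/B_1)$ all three negative terms decay at the common rate $\Theta(\mu\eta_k)$, giving a one-step contraction
\begin{equation*}
\E[\Phi_{t+1}]\leq (1-\Theta(\mu\eta_k))\,\E[\Phi_t]+O(\mu\eta_k\,\epsilon_k).
\end{equation*}
Iterating this over $T_k=\Theta(1/(\mu\eta_k))$ steps drives $\E[\Phi_{T_k}]$ below $2\epsilon_k$ provided the initial $\Phi_0$ at the start of the stage is $O(\epsilon_{k-1})$, which is ensured by the warm start together with fresh initialization of $\u,\v$ (or by carrying them across stages after verifying their errors are also $O(\epsilon_k)$).

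The main obstacle I expect is picking the step-size schedule so that both the noise floor $O(\beta^2/B_1+\gamma^2 B_1/(nB_2)+\eta^2)$ and the scale of the initial $\Phi_0$ at the beginning of stage $k$ are simultaneously at most $O(\mu\eta_k\epsilon_k)$ and $O(\epsilon_{k-1})$; this is exactly what dictates $\eta_k=O(\min(\mu\min(B_1,B_2)\epsilon_k,\,\mu B_1B_2\epsilon_k/n))$. Matching the two regimes in this min explains the two terms in the final complexity via which side of the noise bound is active (it is the $\gamma^2 B_1/(nB_2)$ term that produces the $n/(B_1B_2)$ branch, and the $\beta^2/B_1$ term that produces the $1/\min(B_1,B_2)$ branch). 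Once the per-stage contraction is established, the induction on $k$ and the geometric sum of $T_k$ complete the proof, and the (strongly) convex case follows because strong convexity implies PL with the same $\mu$, while for merely convex $F$ one would apply the same template with an added quadratic regularizer, as is standard for stagewise schemes.
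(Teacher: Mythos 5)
Your proposal is sound in its overall architecture and matches the paper's stagewise design (halving targets $\epsilon_k$, the parameter scalings $\beta_k=\Theta(\eta_k)$, $\gamma_k=\Theta(n\eta_k/B_1)$, $T_k=\Theta(1/(\mu\eta_k))$, a geometric sum dominated by the last stage, and the same Lyapunov ingredients $F-F^*$, $\Xi_t$, $\|\v^t-\nabla F(\w^t)\|^2$ with the $\|u_i^{t+1}-u_i^t\|^2$ cancellation between Lemma~\ref{lem:fval_recursion} and Lemma~\ref{lem:grad_recursion}). Where you differ is the within-stage mechanism: you invoke the PL inequality at \emph{every iteration} to obtain a one-step linear contraction $\E[\Phi_{t+1}]\leq(1-\Theta(\mu\eta_k))\E[\Phi_t]+O(\mu\eta_k\epsilon_k)$, whereas the paper never proves per-iteration linear convergence. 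Instead, it reuses the nonconvex stage bound on the \emph{averaged} gradient norm (the inequality leading to Theorem~\ref{thm:sox}), applies PL only once per stage to the stage output (Lemma~\ref{lem:per_stage_func}), proves separately that the tracking errors $\Delta_k+C_5\Xi_k$ also halve across stages (Lemma~\ref{lem:epoch_var_decay}), and closes with an induction (Lemma~\ref{lem:epoch_sox_induction}). Your route is arguably more standard and yields last-iterate-per-stage guarantees directly; the paper's route recycles the nonconvex analysis verbatim, at the price of an implicit choice of the stage output (a random/average iterate so that its squared gradient norm is bounded by the stage average) and an extra lemma for the variance decay. Both give the same $\eta_k$ constraint and the same complexity, with the noise-floor accounting you describe (the $\gamma$-term giving the $n/(B_1B_2)$ branch and the $\beta$-term the $1/\min(B_1,B_2)$ branch) matching the paper's.

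One caveat: your primary suggestion of ``fresh initialization of $\u,\v$'' at the start of each stage does not keep $\Phi_0=O(\epsilon_{k-1})$ — a fresh initialization gives $\E[\Delta]\lesssim C_f^2C_g^2$ and $\E[\Xi]\lesssim\sigma^2/B_2$, which are constants, and with only $T_k=\Theta(1/(\mu\eta_k))$ iterations (no logarithmic factor) a constant-sized $\Phi_0$ cannot be driven down to $\epsilon_k$. The induction only closes if $\u,\v$ are carried across stages and their errors are shown to halve along with the function gap, which is exactly your parenthetical alternative and what the paper's Lemma~\ref{lem:epoch_var_decay} and Lemma~\ref{lem:epoch_sox_induction} establish; you would need to make that the main argument rather than an aside.
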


Specific values of the parameters in Theorem~\ref{thm:sox-boost-simp} can be found in Theorem~\ref{thm:sox-boost} in the appendix. The result above directly implies the improved complexity for $\mu$-strongly convex function, as it automatically satisfies the PL condition. For a convex function, we use a common trick to make it strongly convex by contructing $\hat F(\w) = F(\w) + \frac{\mu}{2}\|\w\|^2$, then we use SOX-boost to optimize $\hat F(\w)$ with a small $\mu$. Its convergence is summarized by the following corollary. 
\begin{cor}\label{cor:cvx}
	Assume $F$ is convex, by setting $\mu=O(\epsilon)$, $\eta_k, \gamma_k, \beta_k, T_k$ according to Theorem~\ref{thm:sox-boost-simp}, then after $K=\log(1/\epsilon)$-stages SOX-boost for optimizing $\hat F$ ensures that $\E[\hat F(\w^K) - \min_{\w}\hat F(\w)]\leq \epsilon$, which implies a total iteration complexity of $T=O(\max(\frac{n}{\mu^2 B_1B_2\epsilon}, \frac{1}{\mu^2\min(B_1, B_2)\epsilon}))$ for ensuring $\E[F(\w^K) - F(\w^*)]\leq \epsilon$. 
\end{cor}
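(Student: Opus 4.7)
The plan is to reduce the convex case to the PL case already handled in Theorem~\ref{thm:sox-boost-simp}, by optimizing the strongly convex surrogate $\hat F(\w)=F(\w)+\frac{\mu}{2}\|\w\|^2$ and then translating the guarantee back to $F$ via a standard regularization inequality.

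First, I would check that $\hat F$ satisfies the hypotheses of Theorem~\ref{thm:sox-boost-simp}. Since $F$ is convex and $L_F$-smooth under Assumption~\ref{asm:lip}, the surrogate $\hat F$ is $(L_F+\mu)$-smooth and $\mu$-strongly convex, which in particular implies the $\mu$-PL condition used in that theorem. The FCCO structure is preserved in the sense that the extra term $\frac{\mu}{2}\|\w\|^2$ is deterministic and cheap to differentiate; running SOX-boost on $\hat F$ therefore amounts to running SOX-boost on $F$ and adding $\mu\w^t$ to the gradient estimator. Because $\mu\w^t$ is deterministic, it inflates neither the variance of the stochastic gradient nor the function-value error $\Xi_t$, and every lemma used in the proof of Theorem~\ref{thm:sox-boost-simp} carries over after replacing $L_F$ by $L_F+\mu=O(L_F)$.

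Second, I would invoke Theorem~\ref{thm:sox-boost-simp} on $\hat F$ with the prescribed $\eta_k,\gamma_k,\beta_k,T_k$ at target accuracy $\epsilon/2$. Letting $\hat\w^*=\arg\min_{\w}\hat F(\w)$, this yields $\E[\hat F(\w^K)-\hat F(\hat\w^*)]\leq \epsilon/2$ after $K=\log(1/\epsilon)$ stages in $T=O\bigl(\max\bigl(\frac{n}{\mu^2 B_1B_2\epsilon},\frac{1}{\mu^2\min(B_1,B_2)\epsilon}\bigr)\bigr)$ iterations. To transfer the bound to $F$, I would use the elementary inequality
\begin{align*}
F(\w^K)-F(\w^*)&=\hat F(\w^K)-\tfrac{\mu}{2}\|\w^K\|^2-F(\w^*)\\
&\leq \hat F(\w^K)-\hat F(\hat\w^*)+\hat F(\w^*)-F(\w^*)\\
&=\bigl(\hat F(\w^K)-\hat F(\hat\w^*)\bigr)+\tfrac{\mu}{2}\|\w^*\|^2,
\end{align*}
where the middle step uses $\hat F(\hat\w^*)\leq \hat F(\w^*)$ and discards the non-positive term $-\tfrac{\mu}{2}\|\w^K\|^2$. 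Choosing $\mu=O(\epsilon/\|\w^*\|^2)=O(\epsilon)$, under the standard assumption that $\|\w^*\|$ is bounded by a constant, makes the second summand at most $\epsilon/2$, so $\E[F(\w^K)-F(\w^*)]\leq\epsilon$. Substituting this choice of $\mu$ into the iteration count from Theorem~\ref{thm:sox-boost-simp} then produces the stated complexity.

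The main obstacle, such as it is, lies in bookkeeping: I would need to verify that all step sizes and stage lengths prescribed by Theorem~\ref{thm:sox-boost-simp} depend on $\mu$ only polynomially, so that replacing $\mu$ by $O(\epsilon)$ does not secretly worsen the complexity beyond the advertised $\mu^{-2}\epsilon^{-1}$ factor, and to confirm that the deterministic correction $\mu\w^t$ is harmless in the recursions on $\Xi_t$ and on the iterate gap. Both should follow directly from inspecting the proof of Theorem~\ref{thm:sox-boost-simp}.
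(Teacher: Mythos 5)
Your proposal is correct and follows essentially the same route as the paper: run SOX-boost on the strongly convexified surrogate $\hat F(\w)=F(\w)+\frac{\mu}{2}\|\w\|^2$ (which satisfies the $\mu$-PL condition via strong convexity), transfer the guarantee back to $F$ through the chain $F(\w^K)\leq \hat F(\w^K)\leq \hat F(\hat\w^*)+\epsilon\leq \hat F(\w^*)+\epsilon=F(\w^*)+\frac{\mu}{2}\|\w^*\|^2+\epsilon$, and choose $\mu=O(\epsilon/\|\w^*\|^2)$ under the boundedness assumption $\|\w^*\|\leq C_*$. The only difference is that you spell out the verification that the deterministic regularizer does not affect the variance recursions and only perturbs the smoothness constant, which the paper leaves implicit.
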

{\bf Remark:} The above result implies an complexity of $T=O(\max(\frac{n}{B_1B_2\epsilon^3}, \frac{1}{\min(B_1, B_{2})\epsilon^3}))$ for a convex function. 

\begin{algorithm}[t]
	\caption{SOX-boost($\w_1$, $\u_1$, $\v_1$, $K$)}
	\begin{algorithmic}[1]
		\FOR{epochs $k=1,\ldots,K$}
		\STATE Update $\w$, $\u$, $\v$ by SOX($\w^k$, $\u^k$, $\v^k$, $\eta_k$, $\beta_k$, $\gamma_k$, $T_k$) 
		\STATE Update $\eta_k, \beta_k, \gamma_k, T_k$ according to Theorem~\ref{thm:sox-boost-simp}
		\ENDFOR
	\end{algorithmic}
	\label{alg:sox-boost}
\end{algorithm}

\subsection{Optimal Rate for A Class of Convex Problems}
In this section, we consider a class of FCCO problems on a closed, non-empty, and convex domain $\Omega$ and $d' = 1$. 
For simplicity, we denote $g(\w; \z_i, \S_i)$ by $g_i(\w)$ and its stochastic estimator $g(\w; \z_i,\xi_i)$ by $g_i(\w;\xi_i)$ in this section. We additionally make the assumption below.
\begin{asm}\label{asm:bounded_dom}
	Assume that $d'=1$, $f_i$ is monotonically increasing and convex, while $g_i$ is convex. The domain $\Omega$ is bounded such that  $\max_{\w\in\Omega}\|\w - \w_*\|\leq C_{\Omega}$ and $\max_{\w\in\Omega}\|g(\w; \z_i,\xi_i)\|\leq D_g$ for any $i$ and r.v. $\xi_i$.
\end{asm}
This FCCO can be reformulated as a saddle point problem.
\begin{align*}
	\min_{\w\in\Omega}\max_{\pi_1\in\Pi_1}\max_{\pi_2\in\Pi_2} \L(\w,\pi_1,\pi_2),
\end{align*}
where $\L(\w,\pi_1,\pi_2) = \frac{1}{n}\sum_{\z_i\in\D} \L_i(\w,\pi_{i,1},\pi_{i,2})$, $\pi_1 = [\pi_{1,1},\dotsc, \pi_{n,1}]^\top$, $\pi_2 = [\pi_{1,2},\dotsc, \pi_{n,2}]^\top$, $\L_{i,1}(\w,\pi_{i,1},\pi_{i,2}) = \pi_{i,1} \L_{i,2}(\w,\pi_{i,2}) - f_i^*(\pi_{i,1})$ and $\L_{i,2}(\w,\pi_{i,2}) = \inner{\pi_{i,2}}{\w} - g_i^*(\pi_{i,2})$. Here $f_i^*(\cdot)$ and $g_i^*(\cdot)$ are the convex conjugates of $f_i$ and $g_i$, respectively.  We analyze the SOX algorithm with $\beta=1$, $\gamma = \frac{1}{1+\tau}$ ($\tau>0$) and the projection onto $\Omega$, which is equivalent to the following primal-dual update formula:
\begin{align}\label{eq:pd1}
	& \pi_{i,2}^{t+1} = \arg\max_{\pi_{i,2}} \inner{\pi_{i,2}}{\w^t} - g_i^*(\pi_{i,2}),\quad  \z_i\in\B_1^t,\\\nonumber
	& \pi_{i,1}^{t+1} = \begin{cases}\makecell{\arg\max_{\pi_{i,1}} \pi_{i,1}\L_{i,2}(\w^t,\pi_{i,2}^t(\B_{i,2}^t))  - f_i^*(\pi_{i,1}) - \tau D_{f_i^*}(\pi_{i,1}^t,\pi_{i,1}),}& \z_i\in\B_1^t\\ \pi_{i,1}^t, &\z_i\not\in\B_1^t,\end{cases}
\\\nonumber
	& \w^{t+1} = \arg\min_{\w\in\Omega} \frac{1}{B_1}\sum_{\z_i\in\B_1^t} \pi_{i,1}^t \pi_{i,2}^{t+1}(\B_{i,2}^t)\w + \frac{\eta}{2}\Norm{\w-\w^t}^2,
\end{align}
where $\pi_{i,2}(\B_{i,2})$ is a stochastic estimation of $\pi_{i,2}$ based on the mini-batch $\B_{i,2}$ and $D_f(x,y)\coloneqq f(y) - f(x) - \inner{\nabla f(x)}{y-x}$ is the Bregman divergence. We define that $\bD_{f^*}(\pi_1,\pi_1') \coloneqq \sum_{\z_i\in\D} D_{f_i^*}(\pi_{i,1},\pi_{i,1}')$ for any $\pi_1,\pi_1'\in\Pi_1$. Note that \eqref{eq:pd1} is equivalent to $\pi_{i,2}^{t+1} = \nabla g_i(\w^t)$. Besides, for $\z_i\in\B_1^t$ and $\pi_{i,1}^t=\nabla f_i(u_i^t)$ we have
\begin{align*}
	\pi_{i,1}^{t+1}  = \arg\min_{\pi_{i,1}} &-\pi_{i,1}\L_{i,2}(\w^t,\pi_{i,2}^t(\B_{i,2}^t)) + f_i^*(\pi_{i,1})  + \tau D_{f_i^*}(\pi_{i,1}^t,\pi_{i,1})\\
 =\arg\min_{\pi_{i,1}} &-\pi_{i,1}\frac{g_i(\w^t;\B_{i,2}^t)+\tau f_i(u_i^t)}{1+\tau} + f_i^*(\pi_{i,1})  .
\end{align*}
The last equation above is due to $g_i(\w^t;\B_{i,2}^t) = \L_{i,2}(\w^t,\pi_{i,2}^t(\B_{i,2}^t))$ and $f_i(u_i^t) = \pi_{i,1}^t u_i^t - f_i^*(\pi_{i,1}^t)$. Then, we can conclude that $\pi_{i,1}^{t+1} = \nabla f_i(u_i^{t+1})$ if we define $u_i^{t+1} = (1-\gamma) f_i(u_i^t) + \gamma g_i(\w^t;\B_{i,2}^t)$ and $\gamma = \frac{1}{1+\tau}$.

On this class of convex problems, we can establish an improved rate for SOX in the order of $O(1/\epsilon^2)$, which is optimal in terms of $\epsilon$ (but might not be optimal in terms of $n$). The analysis is inspired by~\citet{zhang2020optimal}, which provide the optimal complexity for the traditional SCO problems. We extend their analysis to handle the selective sampling/update to accommodate the FCCO problem. Moreover, our analysis also gets rid of one drawback of \citet{zhang2020optimal} that needs two independent batches to estimate $g_i(\w)$ and $\nabla g_i(\w)$, which is achieved by cancelling the highlighted terms in Lemma~\ref{lem:bound_Q1} and Lemma~\ref{lem:bound_Q0}. 

\begin{theorem}\label{thm:cvx_sox-simp}
	Assume $f_i$ is monotone, convex, smooth and Lipschitz-continuous while $g_i$ is convex and Lipschitz-continuous. SOX with $ \eta=O(\min(\min\{B_1,B_2\}\epsilon, \frac{B_1\epsilon}{n})), \gamma=O(B_2\epsilon), \beta=1$, $\bar\w^T=\sum_t\w^t/T$, $\bar \pi_{1}^T = \sum_t\pi_{1}^t/T$, $\bar \pi_{2}^T = \sum_t\pi_{2}^t/T$ ensures that $\max_{\w,\pi_1,\pi_2}\E[\L(\bar\w^T,\pi_1,\pi_2) - \L(\w,\bar \pi_1^T, \bar \pi_2^T)]\leq \epsilon$ after $O(\max(\frac{n}{B_1\epsilon^2}, \frac{n}{B_1B_2\epsilon^2}, \frac{1}{\min(B_1,B_2)\epsilon^2}))$ iterations. 
\end{theorem}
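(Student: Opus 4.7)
The plan is to exploit the primal-dual reformulation already sketched in the excerpt. With $\beta=1$ and $\gamma=1/(1+\tau)$, SOX coincides with the three-block update \eqref{eq:pd1} on the saddle-point problem $\min_{\w\in\Omega}\max_{\pi_1,\pi_2}\L(\w,\pi_1,\pi_2)$, so I would control $\E[F(\bar\w_T)-F(\w^*)]$ by bounding the duality gap $\L(\bar\w_T,\pi_1,\pi_2)-\L(\w,\bar\pi_1^T,\bar\pi_2^T)$ at appropriate comparators and using the identity $F(\w)=\max_{\pi_1,\pi_2}\L(\w,\pi_1,\pi_2)$. Monotonicity and convexity of $f_i$ together with convexity of $g_i$ are what make this identity valid; no smoothness of $g_i$ appears because only $\pi_{i,2}^{t+1}=\nabla g_i(\w^t)$ (a subgradient) enters the $\w$-step.

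First, I would derive per-step inequalities for each block. For $\w^{t+1}$, the optimality condition of the proximal update gives a telescoping bound in $\tfrac{\eta}{2}\|\w^t-\w^*\|^2-\tfrac{\eta}{2}\|\w^{t+1}-\w^*\|^2-\tfrac{\eta}{2}\|\w^{t+1}-\w^t\|^2$ against a linear term driven by $\tfrac{1}{B_1}\sum_{\z_i\in\B_1^t}\pi_{i,1}^t\pi_{i,2}^{t+1}(\B_{i,2}^t)$. For $\pi_{i,1}^{t+1}$, the $\tau D_{f^*}(\pi_{i,1}^t,\pi_{i,1})$ regularizer combined with the $1/L_f$-strong convexity of $f^*$ (since $f_i$ is $L_f$-smooth) yields a three-point inequality of the standard proximal-Bregman form; for inactive indices $\z_i\notin\B_1^t$ I would choose the comparator to equal $\pi_{i,1}^t$ so those blocks drop out, paying a $n/B_1$ factor when one passes from the block-sampled average to the full sum. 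For $\pi_{i,2}^{t+1}$, since this is an exact max of a concave function on the active block, the usual Fenchel inequality applies without a Bregman-smoothing term. Summing these three families of inequalities against the Lagrangian and using convexity-concavity (via Jensen on $\bar\w_T$) converts the bound into a primal-suboptimality statement.

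The hard part, as flagged in the text, is cancelling the bias created by reusing the single batch $\B_{i,2}^t$ in both the $g_i(\w^t;\B_{i,2}^t)$ that drives $\pi_{i,1}^{t+1}$ and the $\pi_{i,2}^{t+1}(\B_{i,2}^t)$ that drives $\w^{t+1}$. Decomposing $\pi_{i,1}^t\pi_{i,2}^{t+1}(\B_{i,2}^t)$ around its conditional mean $\pi_{i,1}^t\nabla g_i(\w^t)$ leaves a martingale difference (handled by taking conditional expectation) and a correlated cross term proportional to $(\pi_{i,1}^{t+1}-\pi_{i,1}^t)(\pi_{i,2}^{t+1}(\B_{i,2}^t)-\nabla g_i(\w^t))$. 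This is exactly the term highlighted in Lemma~\ref{lem:bound_Q1} and Lemma~\ref{lem:bound_Q0}: applying Young's inequality with weight $\tau$, one half is absorbed by the $-\tfrac{\tau}{2L_f}\|\pi_{i,1}^{t+1}-\pi_{i,1}^t\|^2$ generated by the strongly convex $\pi_{i,1}$-update, and the other half becomes a variance term bounded by $\sigma^2/B_2$ under Assumption~\ref{asm:var}. This cancellation is what lets us avoid the two-independent-batches requirement of \citet{zhang2020optimal}.

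After this cancellation, taking total expectation, summing from $t=1$ to $T$, and dividing by $T$ yields a gap bound whose dominant contributions are (i) an initial-distance term $\tfrac{1}{\eta T}C_\Omega^2$, (ii) a selective-sampling penalty of order $\tfrac{n}{B_1 T}\cdot\text{const}$ coming from the block-coordinate argument on $\pi_1$, (iii) a gradient-variance term of order $\tfrac{\zeta^2}{B_1 B_2}\cdot\eta$ (from $\v^t$ with $\beta=1$), and (iv) an inner-function variance term of order $\tfrac{\sigma^2}{B_2}$ weighted by $\tau$ or $\eta/\tau$. Choosing $\eta=O(\min(\min(B_1,B_2)\epsilon,\,B_1\epsilon/n))$, $\gamma=O(B_2\epsilon)$ balances these contributions, and equating each with $\epsilon$ yields exactly $T=O(\max(\tfrac{n}{B_1\epsilon^2},\tfrac{n}{B_1B_2\epsilon^2},\tfrac{1}{\min(B_1,B_2)\epsilon^2}))$, matching the claim.
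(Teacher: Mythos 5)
Your overall route is the paper's route: reformulate as the saddle-point problem, write per-block three-point/Fenchel inequalities, and absorb the batch-reuse correlation into the strong convexity of $f^*$ via Young's inequality with weight $\tau$. However, there is a genuine gap in how you dispose of the zero-mean noise terms. To convert the Lagrangian gap into $\E[F(\bar\w_T)-F(\w^*)]$ you must evaluate the gap at the dual comparator $\bar\pi_1^*,\bar\pi_2^*$ that attains $F(\bar\w_T)=\L(\bar\w_T,\bar\pi_1^*,\bar\pi_2^*)$; this comparator is a function of $\bar\w_T$ and hence of \emph{every} batch, including $\B_2^t$ and future ones. Consequently the term $\inner{\hat g(\w^t;\B_2^t)-g(\w^t)}{\bar\pi_1^*-\pi_1^t}$ appearing in the $\pi_1$-block inequality is \emph{not} a martingale difference, and your claim that it is "handled by taking conditional expectation" fails (that argument is only valid for the $Q_{i,0}$-type terms, where the comparator is the fixed $\w^*$ and $\pi_{i,1}^t$ is past-measurable). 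The paper closes this hole with the ghost-iterate / virtual-batch device of Lemma~\ref{lem:juditsky_variant} (following Juditsky et al.): auxiliary sequences $\tilde\pi_1^t,\hat\pi_1^t$ driven by an independent virtual batch $\tilde\B_2^t$ split the noise term into a genuinely mean-zero part $\inner{\hat g-g}{\tilde\pi_1^t-\pi_1^t}$ and a part that telescopes in the ghost Bregman divergence $\bD_{f^*}(\hat\pi_1^t,\bar\pi_1^*)$ at the cost of an extra variance $O\bigl(L_f\sigma^2/((\tau')^2B_2)\bigr)$. Without this (or an equivalent uniform-over-comparator argument) your proof does not close, and this is precisely the step that produces one of the $\sigma^2/B_2$ terms you need for the stated rate.

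A secondary, repairable inaccuracy: the cancellation is not a single Young step on $(\pi_{i,1}^{t+1}-\pi_{i,1}^t)\bigl(\pi_{i,2}^{t+1}(\B_{i,2}^t)-\nabla g_i(\w^t)\bigr)$. In the paper the batch-reuse correlation enters the $\pi_1$-block bound through $(g_i(\w^t;\B_{i,2}^t)-g_i(\w^t))(\pi_{i,1}^{t+1}-\pi_{i,1}^t)$, which consumes half of the $\tau\bD_{f^*}(\pi_1^t,\pi_1^{t+1})$ generated by the proximal step, while the surviving negative half is cancelled \emph{across} lemmas against the term $\inner{(\pi_{i,1}^{t+1}-\pi_{i,1}^t)\pi_{i,2}^{t+1}}{\w^{t+1}-\w^*}$ from the $\w$-block (Lemmas~\ref{lem:bound_Q1} and~\ref{lem:bound_Q0}); the latter is a comparator-mismatch term, not a variance term. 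Fixing this bookkeeping is routine, but the comparator-measurability issue above is the missing idea you would need to supply.
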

\begin{remark}
The complexity of $O(1/\epsilon^2)$ matches the best for convex problems without additional assumptions~\citet{agarwal2009information}. However,  the convergence bound on the primal-dual gap $\max_{\w,\pi_1,\pi_2}\E[\L(\bar\w^T,\pi_1,\pi_2) - \L(\w,\bar \pi_1^T, \bar \pi_2^T)]$ is weaker than that of the duality gap $\E[\max_{\w,\pi_1,\pi_2} \L(\bar\w^T,\pi_1,\pi_2) - \L(\w,\bar \pi_1^T, \bar \pi_2^T)]$. Similar results were proved in~\cite{DBLP:conf/icml/SongWD21} for stochastic algorithms with stochastic dual coordinate updates.  Please refer to Appendix~\ref{sec:mistake} for more discussions. 
\end{remark}
Next, we show an improved rate for variance convergence when the objective function is strongly convex.

\section{Improved Rate of Convergence for Strongly Convex Problems}\label{sec:scvx}

In this section, we consider a class of regularized FCCO
\begin{align*}
\min_{\w\in\Omega} F(\w),\quad F(\w) \coloneqq F(\w) + R(\w),
\end{align*}
where $F(\w)\coloneqq \frac{1}{n}\sum_{\z_i\in\D} f_i(g(\w;\z_i,\S_i))$, $f_i:\R\rightarrow \R$ and the domain $\Omega$ is closed, non-empty, and convex. For simplicity, we denote $g(\w; \z_i, \S_i)$ by $g_i(\w)$ and its stochastic estimator $g(\w; \z_i,\xi_i)$ by $g_i(\w;\xi_i)$ in this section. We additionally make the assumption below.

\begin{asm}\label{asm:R}
$R$ is $\mu$ strongly-convex, $L_R$-smooth and $C_R$-Lipschitz continuous on $\Omega$. 
\end{asm}

By the Fenchel conjugation, this problem can be reformulated as a saddle point problem.
\begin{align*}
	\min_{\w\in\Omega}\max_{\pi\in\Pi} \L(\w,\pi),\quad \L(\w,\pi) = \Phi(\w,\pi) - \mathbf{f}^*(\pi) + R(\w),
\end{align*}
where $f_i^*$ is the convex conjugate of $f_i$, $\mathbf{f}^*(\pi) = \frac{1}{n}\sum_{\z_i\in\D} f_i^*(\pi_i)$, $\Phi(\w,\pi) =  \frac{1}{n}\sum_{\z_i\in \D} \pi_i g_i(\w)$.

\begin{theorem}\label{thm:scvx_sox}
Under Assumptions~\ref{asm:var}, \ref{asm:R}, SOX with $\eta_t=O(\mu(t+1)), \gamma_t =\frac{1}{1+\tau_t}, \tau_t=O(\frac{B_1}{n}(t+1)), \beta_t =\beta =1$ ensures that $\max\{\E[F(\bar{\w}^T) - F(\w^*)], \E[\|\w^T - \w^*\|_2^2]\}\leq \epsilon$ after $\tilde{O}(\frac{n}{B_1\mu^2 \epsilon})$ iterations. Moreover, $\max_{\w,\pi} \E[\L(\bar{\w}^T,\pi) - \L(\w,\bar{\pi}^T)] \leq \epsilon$ after $\tilde{O}\left(\frac{n}{B_1 \mu \epsilon}\right)$ iterations, where $\bar\w^T=\sum_{t=0}^{T-1}\w^t/T$, $\bar \pi^T = \sum_{t=0}^{T-1}\pi^t/T$. 
\end{theorem}
\section{Experiments}\label{sec:exp}
In this section, we provide some experimental results to verify some aspects of our theory and compare SOX  with other baselines for three applications: deep average precision (AP) maximization, $p$-norm push optimization with concentration at the top, and neighborhood component analysis (NCA).

\subsection{Deep AP Maximization}\label{sec:ap} 
AP maximization in the form of FCCO has been considered in~\citet{qi2021stochastic,wang2021momentum}. For a binary classification problem, let $\S_+$, $\S_-$ denote the set of positive and negative examples, respectively, $\S=\S_+\cup\S_-$ denote the set of all examples.  A smooth surrogate objective for maximizing AP can be formulated as:
\begin{align}\label{eq:ap}
	F(\w) = - \frac{1}{|\S_+|}\sum_{\x_i\in\S_+}\frac{\sum_{\x\in\S_+}\ell(h_\w(\x) - h_\w(\x_i))}{\sum_{\x\in\S}\ell(h_\w(\x) - h_\w(\x_i))},
\end{align}
where  $\ell(\cdot)$ is a surrogate function that penalizes large input. It is a special case of FCCR by defining $g_i(\w)=[\sum_{\x\in\S_+}\ell(h_\w(\x) - h_\w(\x_i)), \sum_{\x\in\S}\ell(h_\w(\x) - h_\w(\x_i))]$ and $f(g_i(\w)) = - \frac{[g_i(\w)]_1}{[g_i(\w)]_2}$.
\begin{figure*}
	\subfigure[Varying $B_1$]{
		\centering
		\includegraphics[width=0.3\linewidth]{./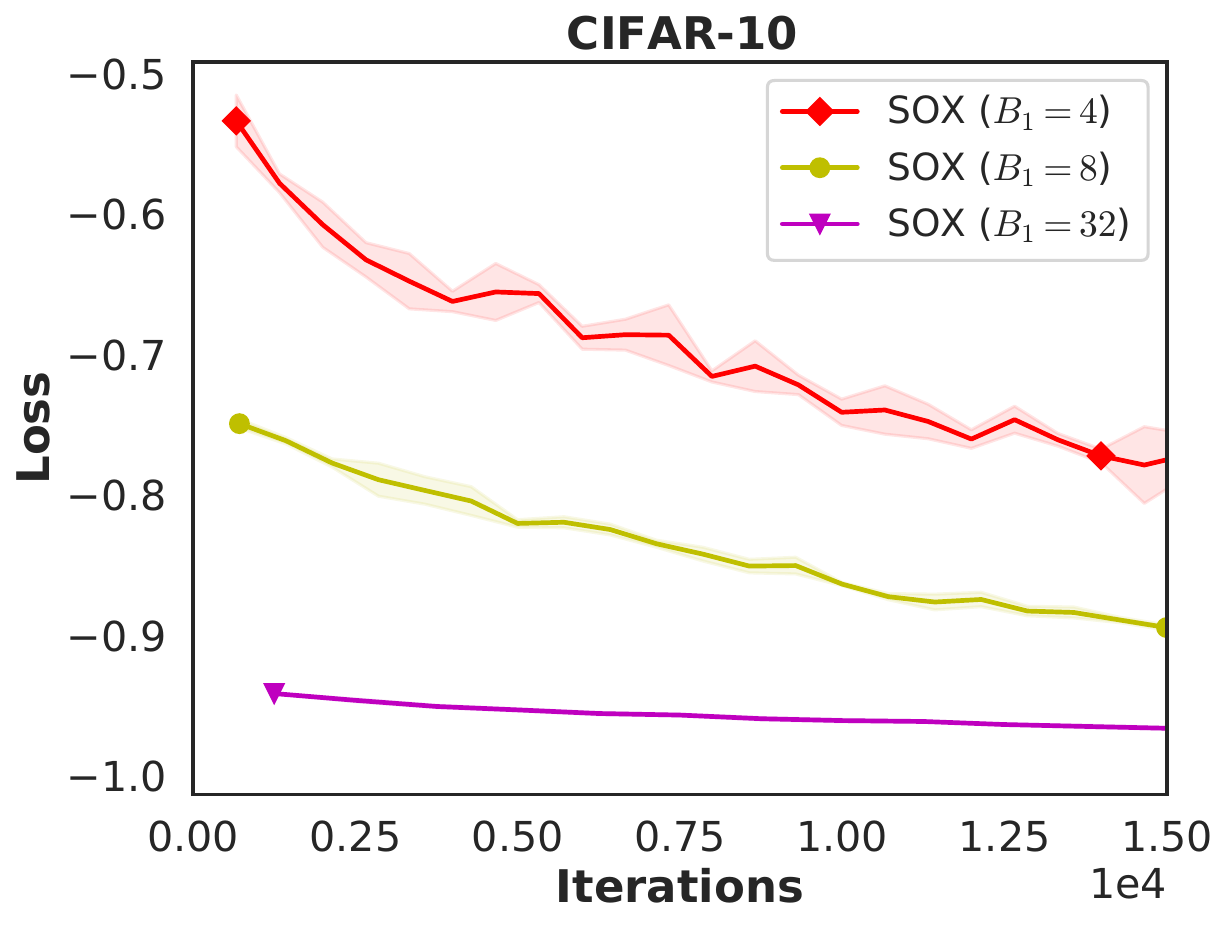}
	}
	\hfill
	\subfigure[Varying $B$]{	\centering
		\includegraphics[width=0.3\linewidth]{./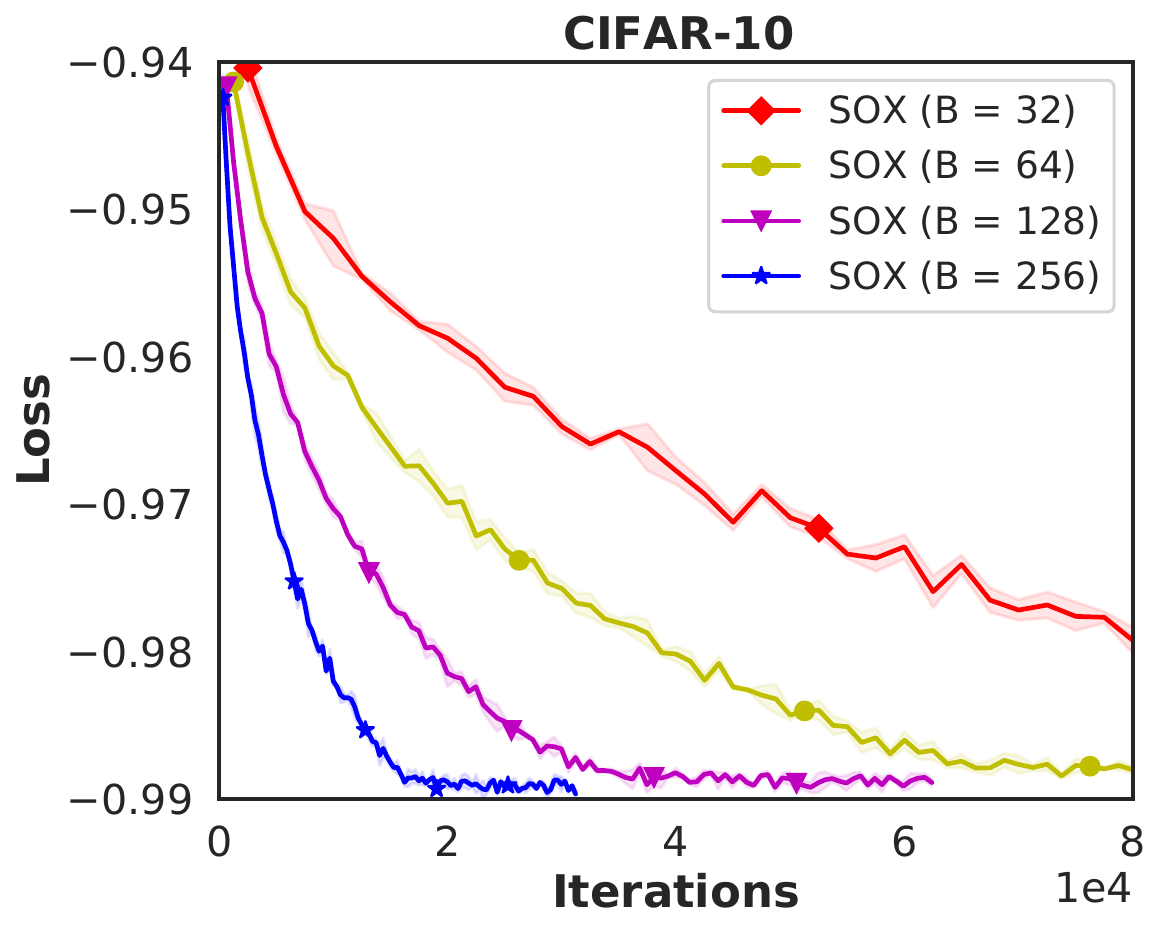}
	}
	\hfill
	\subfigure[Varying $\gamma$]{	\centering
		\includegraphics[width=0.3\linewidth]{./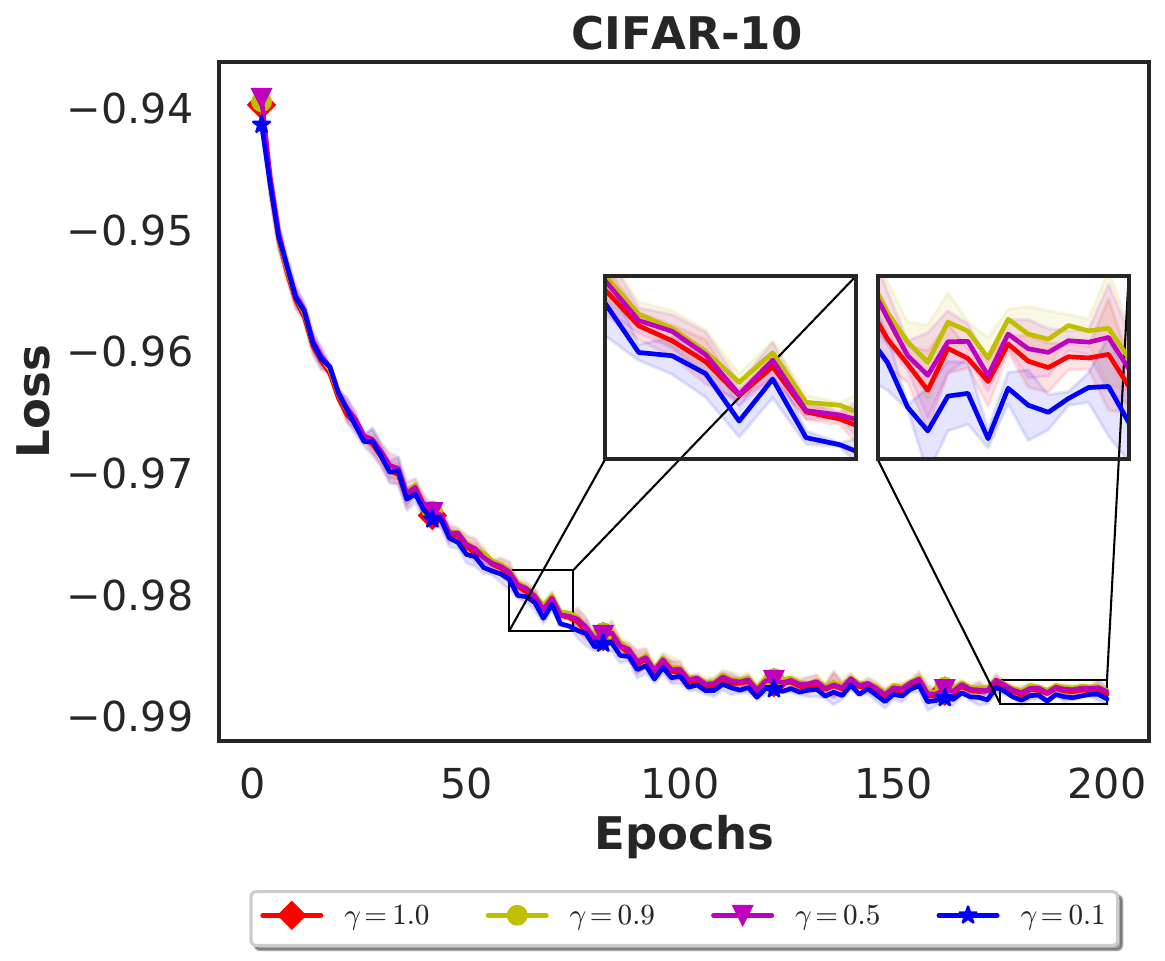}
	}
	\hfill
	\subfigure[vs. Baselines]{	\centering
		\includegraphics[width=0.3\linewidth]{./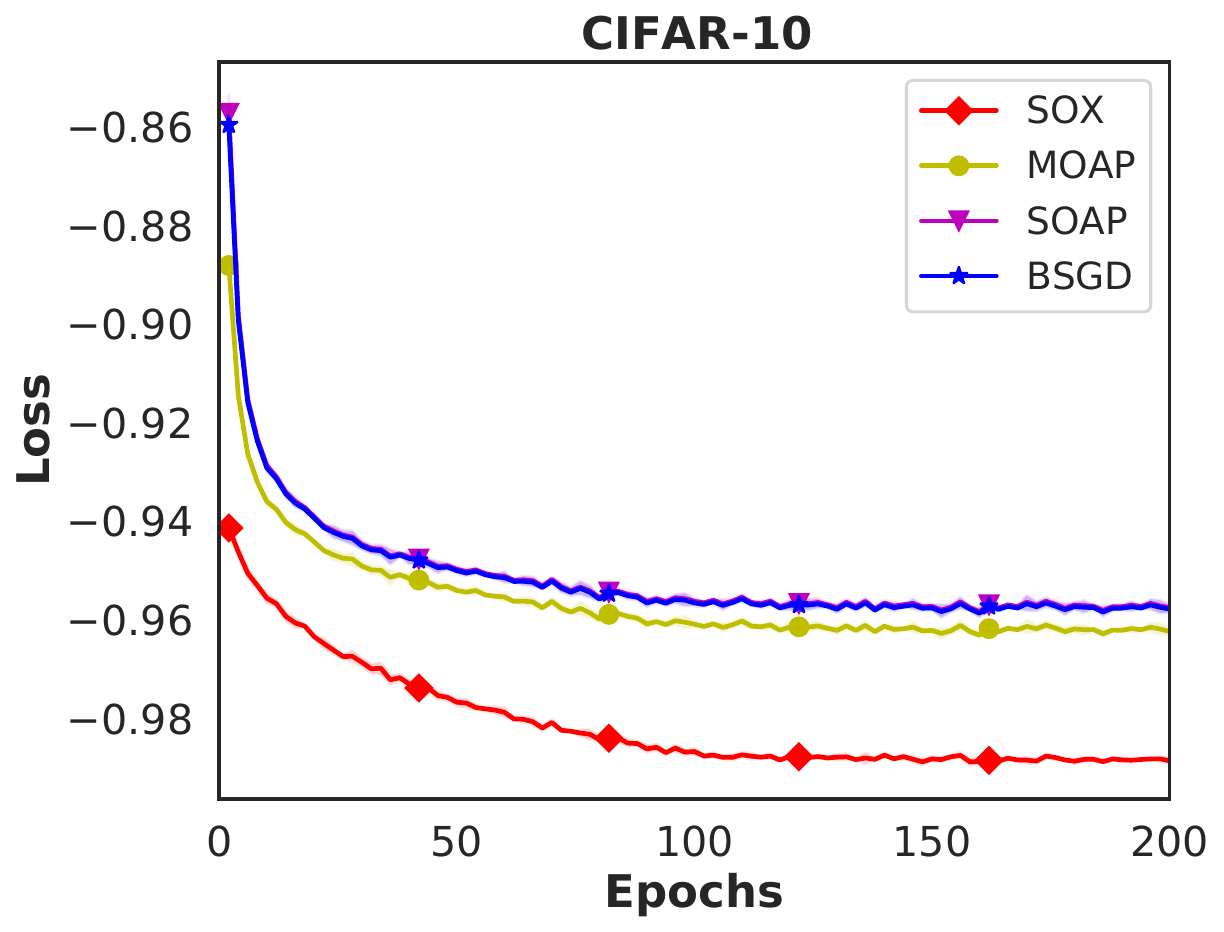}
	}
	\hfill
		\subfigure[Varying $B_1$]{
		\centering
		\includegraphics[width=0.3\linewidth]{./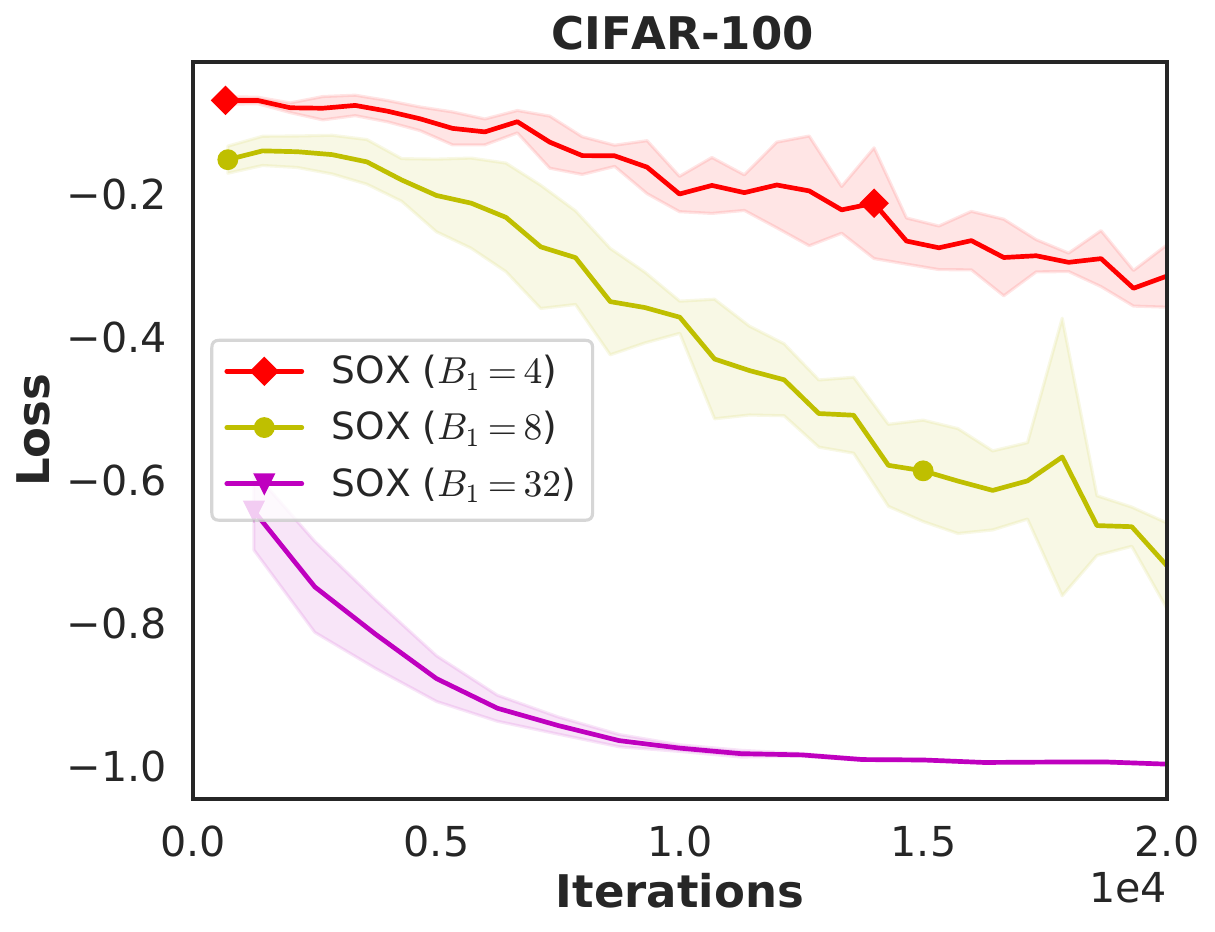}
	}
	\hfill
	\subfigure[Varying $B$]{	\centering
		\includegraphics[width=0.3\linewidth]{./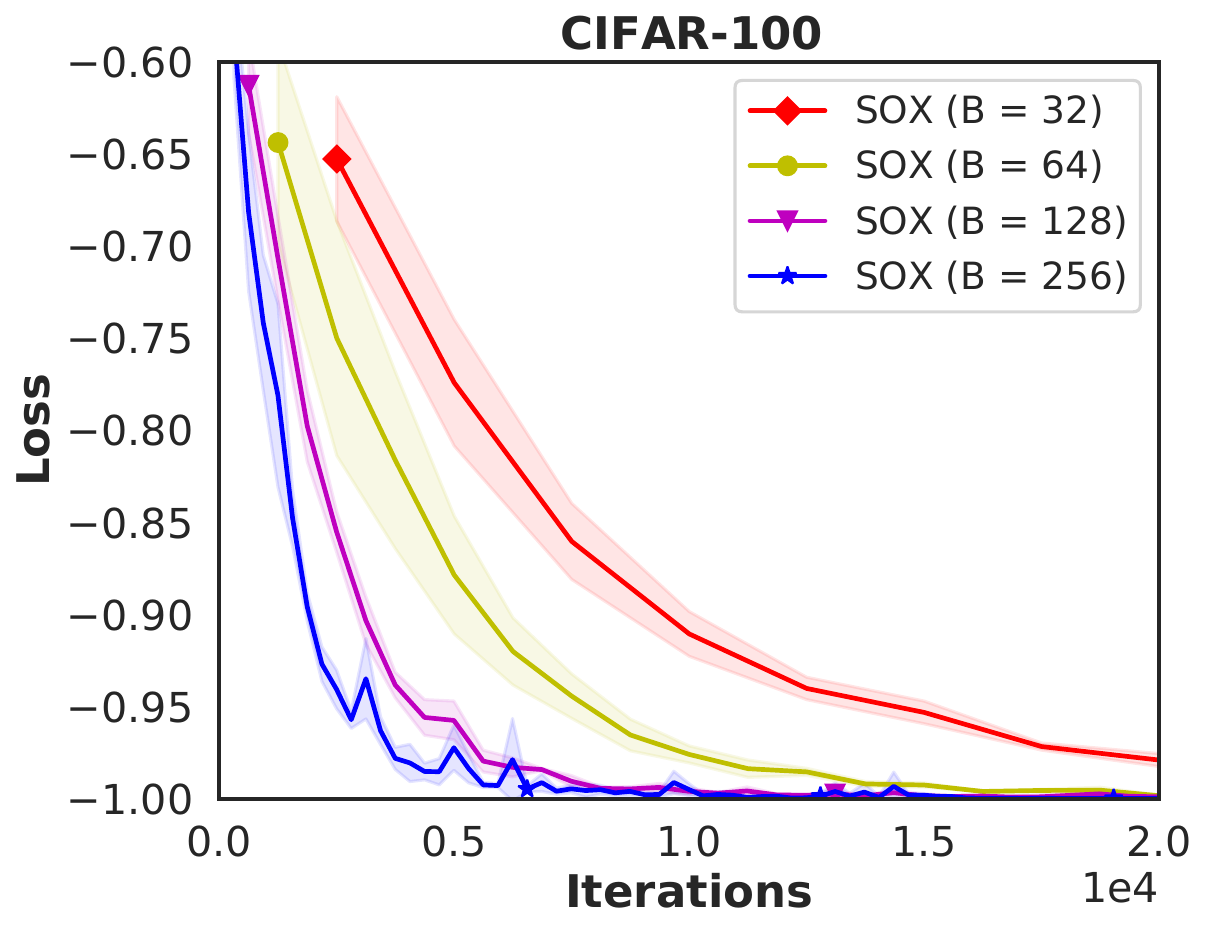}
	}
	\hfill
	\subfigure[Varying $\gamma$]{	\centering
		\includegraphics[width=0.3\linewidth]{./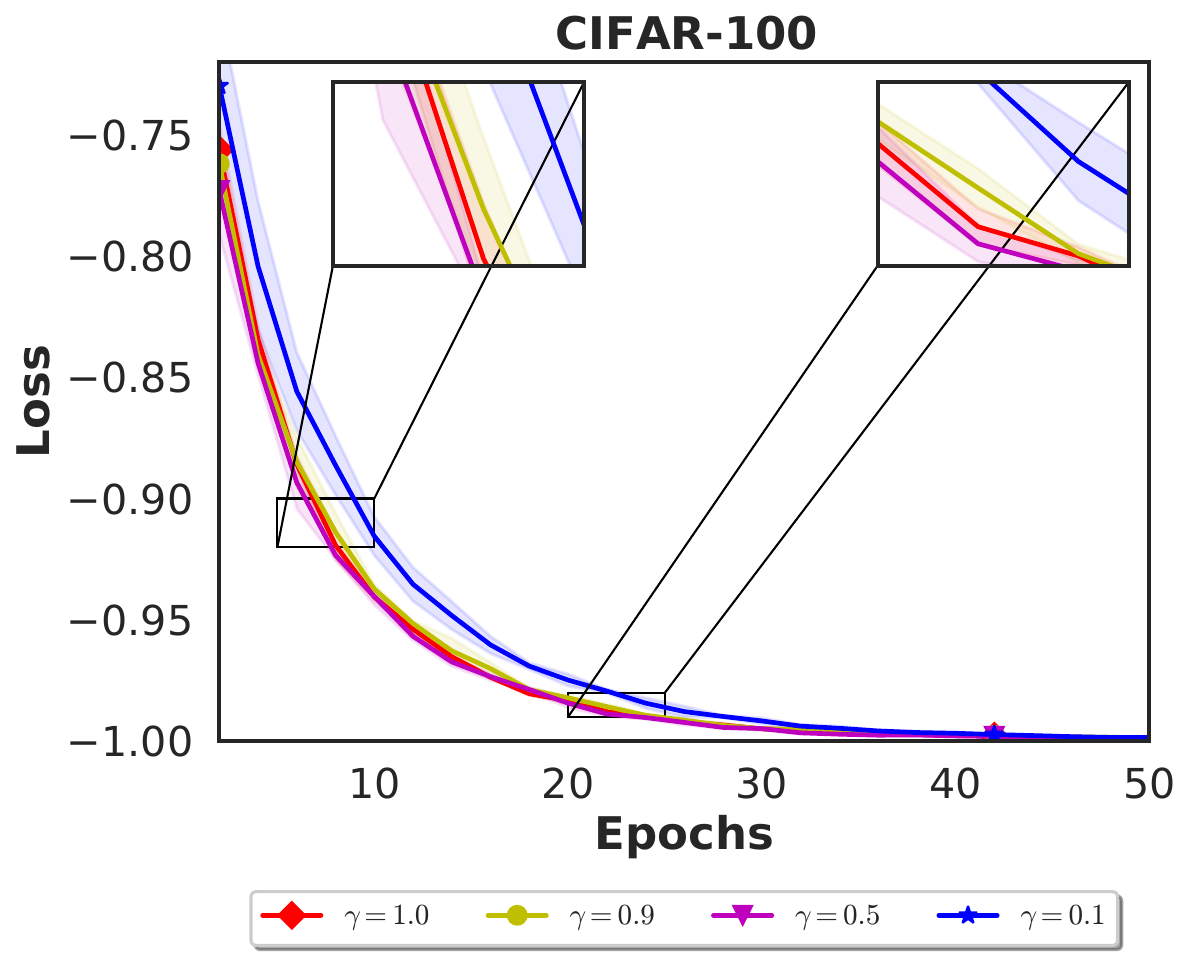}
	}
	\hfill
	\subfigure[vs. Baselines]{	\centering
		\includegraphics[width=0.3\linewidth]{./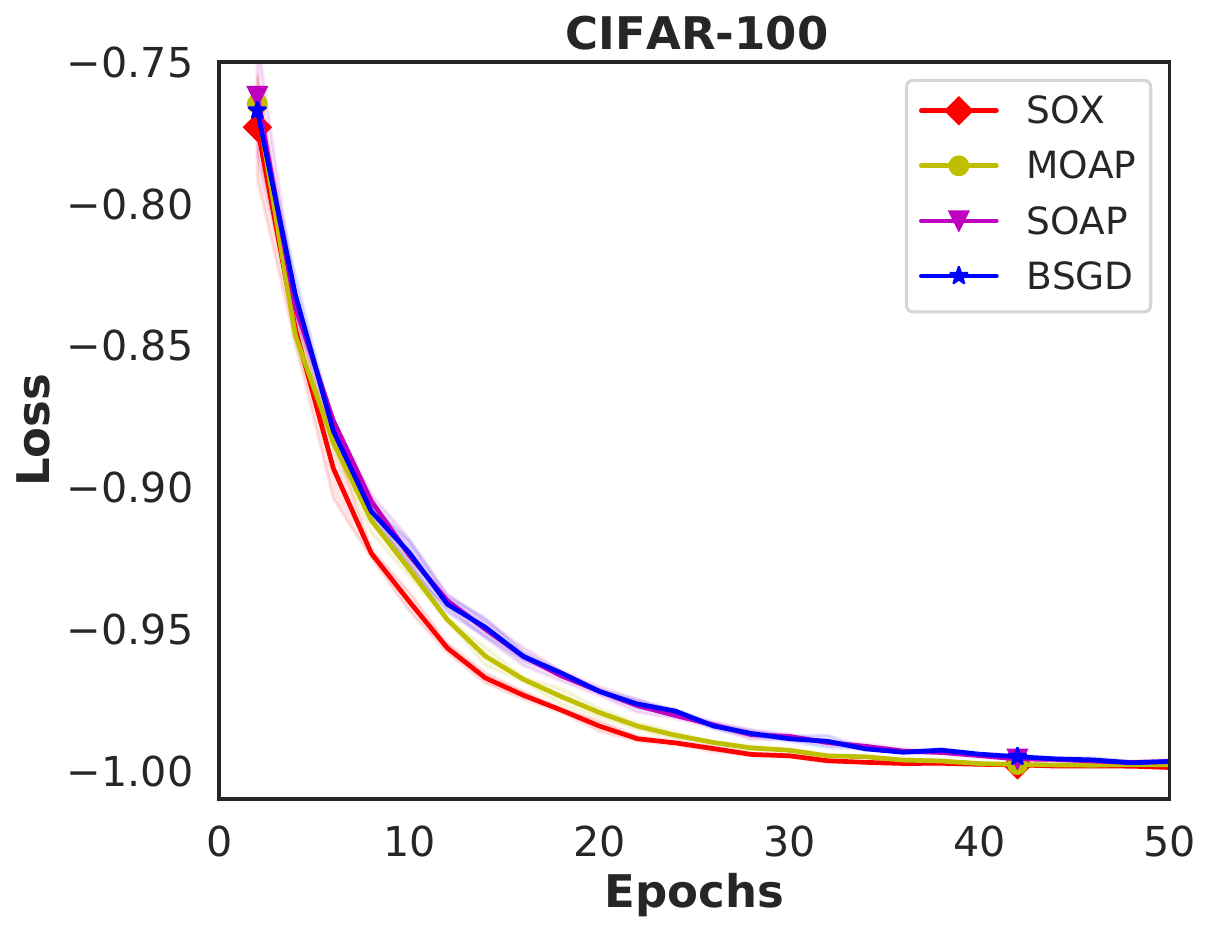}
	}
	\caption{Results of AP maximization on the CIFAR-10 and CIFAR-100 data sets.}
	\label{fig:AP_tr_loss}
\end{figure*}

{\bf Setting.} We conduct experiments on two image datasets, namely CIFAR-10, CIFAR-100. We use the dataloader provided in the released code of~\citet{qi2021stochastic}, which constructs the imbalanced versions of binarized CIFAR-10 and CIFAR-100. We consider two tasks: training ResNet18 on the CIFAR-10 data set and training ResNet34 on the CIFAR-100 data set. We follow the same procedure as in~\citet{qi2021stochastic} that first pre-trains the network by optimizing a cross-entropy loss and then fine-tunes all layers with the randomly initialized classification layer. We also use the same squared hinge loss as in~\citet{qi2021stochastic}. We aim to answer the following four questions related to our theory: {\bf Q1}: Given a batch size $B$, what is the best value for $B_1, B_2$, i.e., the sizes of $\B_1^t$ and $\B_2^t$? {\bf Q2}: Is there parallel speed-up by increasing the total batch size $B=B_1+B_2$? {\bf Q3}: What is the  best value of $\gamma$?   {\bf Q4}: Does SOX converge faster than SOAP (SGD-style) and MOAP? In all experiments, we tune the initial learning rate in a range $10^{-4:1:-1}$ to achieve the best validation error, and decrease the learning rate at $50\%$ and $75\%$ of total epochs. The experiments are performed on a node of a cluster with single GeForce RTX 2080 Ti GPU. We tune the value of $\gamma$ and fix $\beta=0.1$ (same as the default value 0.9 of gradient momentum).

$\bullet$ To answer Q1, we fix the total batch size $B$ as $64$ and vary $B_1$ in the range $\{4, 8, 16, 32\}$. The curves of training losses are shown in Figure~\ref{fig:AP_tr_loss}(a) and (e) on the two datasets. We can see that when $B_1=32=B/2$ SOX has the fastest convergence in terms of number of iterations. This is consistent with our convergence theory. 

$\bullet$ To answer Q2, we fix $B_1=B_2$ and vary $B$ in the range $\{32, 64, 128, 256\}$. The curves of training losses are shown in Figure~\ref{fig:AP_tr_loss}(b) and (f) on the two datasets. We can see that the iteration complexity of SOX decreases as $B$ increases, which  is also consistent with our convergence theory. 

$\bullet$ To answer Q3, we fix $B_1=B_2=B/2=32$ and run SOX with different values of $\gamma$. We can see that $\gamma=1$ does not give the best result, which means the na\"ive mini-batch estimation of $g_i(\w)$ is worse than the moving average estimator with a proper value of $\gamma$. Moreover, we also observe that the best value of $\gamma$ depends on the task: $\gamma=0.1, 0.5$  give the fastest convergence on training ResNet18 with CIFAR-10 and ResNet34 with CIFAR-100, respectively. 

$\bullet$ The Figure~\ref{fig:AP_tr_loss} (d) and (h) answer Q4, which indicates that SOX converges faster than MOAP, which is faster than SOAP (SGD-style) and BSGD. 
\begin{table}[htp]
	\centering 	\caption{Test AP comparison among SOX and the baselines on the AP maximization task. }
	\label{tab:ap_test}	\scalebox{0.9}{		\begin{tabular}{ccccc}
			\toprule[.1em]
			\multicolumn{5}{c}{Dataset: CIFAR-10}\\
			\midrule
			Metrics & MOAP & BSGD & SOAP & SOX \\\midrule[.05em]
			Test AP ($\uparrow$) & 0.763 $\pm$ 0.001 &0.762 $\pm$ 0.001 &0.762 $\pm$ 0.001 & {\bf 0.765 $\pm$ 0.001} \\\cmidrule(lr){1-1}
			\#Epoch ($\downarrow$)&13.0 $\pm$ 4.3 & 15.7 $\pm$ 1.9 &15.7 $\pm$ 1.9 &{\bf 7.0 $\pm$ 3.3} \\
			\midrule
			\multicolumn{5}{c}{Dataset: CIFAR-100}\\
			\midrule
			Metrics & MOAP & BSGD & SOAP & SOX \\\midrule[.05em]
			Test AP ($\uparrow$) & 0.584 $\pm$ 0.010 &0.582 $\pm$ 0.005&0.575 $\pm$ 0.017& {\bf 0.597 $\pm$ 0.012} \\\cmidrule(lr){1-1}
			\#Epoch ($\downarrow$)&17.0 $\pm$ 1.6&{\bf 3.7 $\pm$ 0.9}&11.7 $\pm$ 6.6 & 5.0 $\pm$ 2.8 \\
			\bottomrule
	\end{tabular}}
\end{table}
The curves of average precision on the training data can be found in Figure~\ref{fig:AP_tr_AP} of the Appendix. We also report the test AP of SOX with baselines on CIFAR-10 and CIFAR-100 datasets in Table~\ref{tab:ap_test}. Note that the CIFAR-10 and CIFAR-100 test datasets are \emph{balanced} while our training datasets are imbalanced. Thus, there might be a distribution shift between the training and test datasets. To prevent overfitting, algorithms are early stopped when the validation loss reaches the minimum. The results indicate that SOX converges to a better solution using an overall fewer number of epochs.

\subsection{$p$-norm Push with Concentration at the Top}

In the bipartite ranking problem,  the $p$-norm push objective~\citep{rudin2009p}  can be defined as 
\begin{align*}
F(\w) = \frac{1}{|\S_-|}\sum_{\z_i\in\S_-}\left(\frac{1}{|\S_+|}\sum_{\z_j\in\S_+}\ell(h_\w(\z_j) - h_\w(\z_i))\right)^p,    
\end{align*}
where $p>1$ and $\ell(\cdot)$ is similar as above.  We can cast this function into FCCR by defining $\D=\S_+$, $\S_i=\S_-$,  $g_i(\w)= \frac{1}{|\S_+|}\sum_{\z_j\in\S_+}\ell(h_\w(\z_j) - h_\w(\z_i))$ that couples each positive example $\z_i$ with all negative samples, $f(g)= g^p$. Note that $f$ is monotonically increasing and convex while $g_i$ is convex given that $\ell$ is convex. 
\citet{rudin2009p} only provide a boosting-style $p$-norm push algorithm (BS-PnP), which is not scalable because it processes all $|\S_+|$ positive and $|\S_-|$ negative instances at each iteration. 
\begin{table}[htp]
	\centering 	\caption{Comparison among SOX and the baselines BS-PnP, BSGD for optimizing $p$-norm Push for learning a linear model. }
	\label{tab:pnp_compare_short}	\scalebox{0.9}{	
		\begin{tabular}{cccc}
			\toprule[.1em]
			\multicolumn{4}{c}{covtype} \\
			\midrule[0.01em]
			Algorithms & BS-PnP & BSGD & SOX \\\midrule[.05em]
			Test Loss ($\downarrow$) & 0.778 &0.625 $\pm$ 0.018 &{\bf 0.516 $\pm$ 0.003}\\\cmidrule(lr){1-1}
			Time (s) ($\downarrow$)&6043.90 & {\bf 4.20 $\pm$ 0.08}&4.62 $\pm$ 0.10 \\	\midrule[0.1em]
			\multicolumn{4}{c}{ijcnn1} \\
			\midrule[0.01em]
			Algorithms & BS-PnP & BSGD & SOX \\\midrule[.05em]
			Test Loss ($\downarrow$) & 0.268 &0.202 $\pm$ 0.001 &{\bf 0.128 $\pm$ 0.002}\\\cmidrule(lr){1-1}
			Time (s) ($\downarrow$)&648.06 & {\bf 4.02 $\pm$ 0.04}&4.15 $\pm$ 0.06 \\
			\bottomrule
	\end{tabular}}
\end{table}
We compare SOX with the BS-PnP, and the baselines BSGD~\citep{hu2020biased}. Besides,  SOAP~\citep{qi2021stochastic} and MOAP~\citep{wang2021momentum}, which were originally designed for the AP maximization, can also be applied to the $p$-norm push problem \footnote{Due to limited space, the comparison with SOAP and MOAP can be found in  Table~\ref{tab:pnp_compare} of the Appendix.}. Following \citet{rudin2009p}, we choose $\ell(\cdot)$ to be the exponential function. We conduct our experiment on two LibSVM datasets: covtype and ijcnn1. For both datasets, we randomly choose 90\% of the data for training and the rest of data is for testing. For this experiment, we learn a linear ranking function $h_\w(\x) = \inner{\w}{\x}$ and $p=4$. For each algorithm, we run it with 5 different random seeds and report the average test loss with standard deviation. Besides, we also report the running time. For the stochastic algorithms (BSGD, SOAP, MOAP, SOX), we choose $B=64$ and $B_1=B_2$. The algorithms are implemented with Python and run on a server with 12-core Intel(R) Xeon(R) CPU E5-2697 v2 @ 2.70GHz. 

As shown in Table~\ref{tab:pnp_compare_short}, the BS-PnP algorithm is indeed not scalable and takes much longer time than the stochastic algorithms. Moreover, our SOX is consistently better than BSGD in terms of test loss.

\subsection{Neighborhood Component Analysis}\label{sec:nca}
\begin{figure*}
	\begin{minipage}{0.30\linewidth}
		\includegraphics[width=\linewidth]{./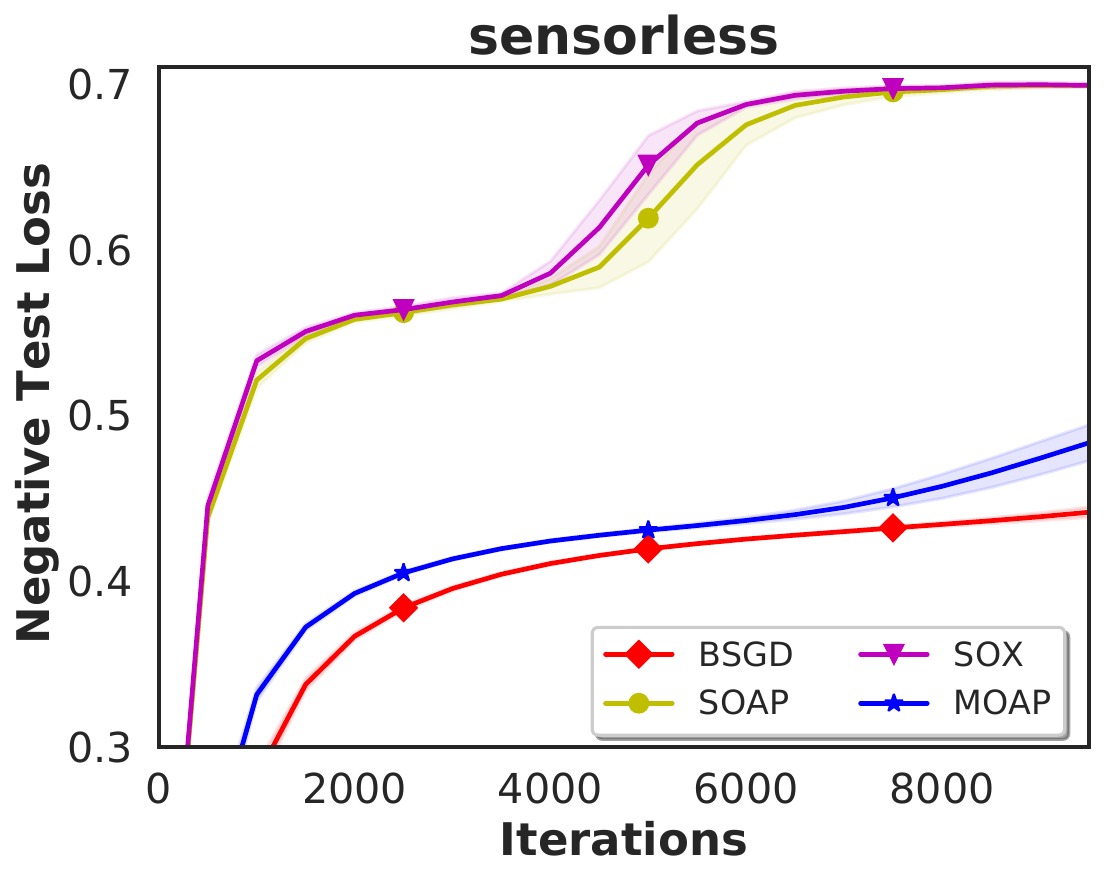}
	\end{minipage}
	\hfill
	\begin{minipage}{0.30\linewidth}
		\includegraphics[width=\linewidth]{./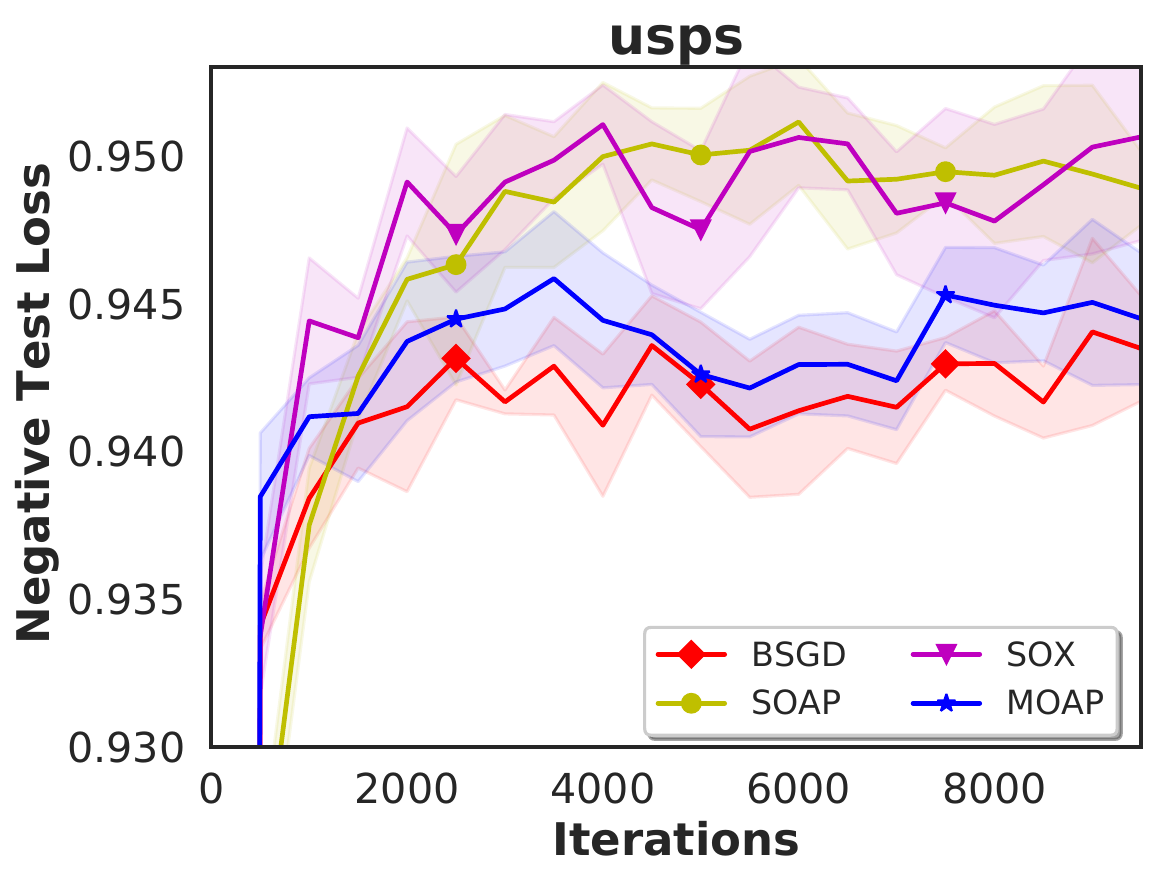}
	\end{minipage}
	\hfill
	\begin{minipage}{0.30\linewidth}
		\includegraphics[width=\linewidth]{./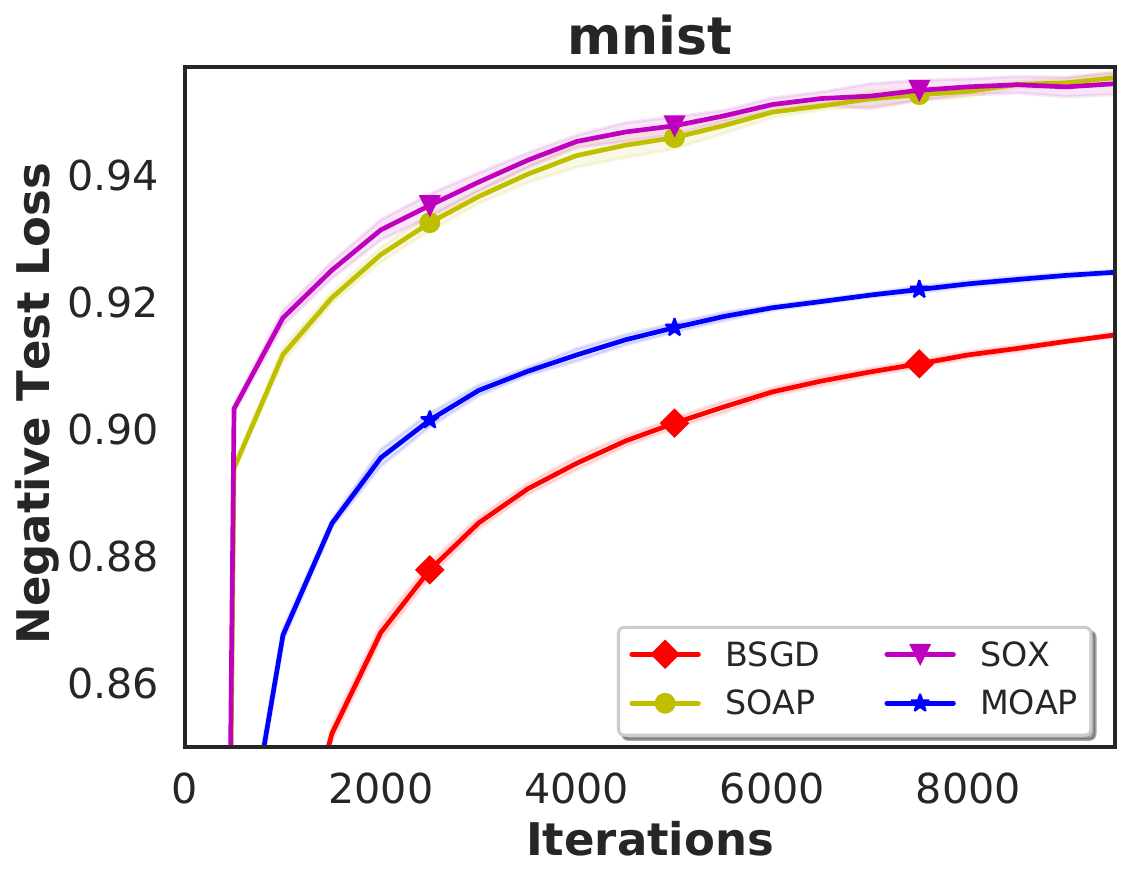}
	\end{minipage}
	\hfill
	\caption{Results of neighborhood component analysis on three datasets.}
	\label{fig:NCA_te_loss}
\end{figure*}

Neighborhood Component Analysis (NCA) was proposed in~\citet{goldberger2004neighbourhood} for learning a Mahalanobis
distance measure. Given a set of data points $\D=\{\x_1,\ldots, \x_n\}$, where each data $\x_i$ has a class label $y_i$. The objective of NCA is defined as 
\begin{align}\label{eq:nca}
	F(A)=-\sum_{\x_i\in\D} \frac{\sum_{\x\in\C_i}\exp(-\|A\x_i - A\x\|^2)}{\sum_{\x\in\S_i}\exp(-\|A\x_i - A\x\|^2)},
\end{align}
where $\C_i=\{\x_j\in\D: y_j=y_i\}$ and  $\S_i =\D\setminus\{\x_i\}$. We can map the above objective as an FCCR by defining  $g_i(A) = [\sum_{\x\in\C_i}\exp(-\|A\x_i - A\x\|^2), \sum_{\x\in\S_i}\exp(-\|A\x_i - A\x\|^2)]$  and $f(g_i(A))=-\frac{[g_i(A)]_1}{[g_i(A)]_2}$. 
The problem \eqref{eq:nca} can be solved by the gradient descent method. However, the exact gradient computation could be expensive or even infeasible when $|\D|$ is large. A widely used stochastic algorithm is to sample a mini-batch $\B\subseteq \D$ and replace $\C_i$ and $\S_i$ by $\C_i\cap \B$ and $\S_i\cap \B$, respectively, which is equivalent to the BSGD algorithm~\citep{hu2020biased}. Besides, SOAP~\citep{qi2021stochastic}, MOAP~\citep{wang2021momentum} and our SOX algorithm are also applicable to \eqref{eq:nca}.

The experiment is performed on three datasets: sensorless, usps, and mnist from the LibSVM~\citep{chang2011libsvm}. For each dataset, we randomly choose 90\% of the data for training and the rest as test data. Each algorithm is executed for 5 runs with different random seeds. We report the average test loss with standard deviation. For all algorithms, we choose batch size to be 64. As shown in Figure~\ref{fig:NCA_te_loss}, our SOX method outperforms previous methods on those datasets.

\section{More Applications of SOX}\label{sec:app} In this section, we present more applications of the proposed algorithm in machine learning, and highlight the potential of the proposed algorithm in addressing their computational challenges. Providing experimental results of these applications is beyond the scope of this paper.  


{\bf Listwise Ranking Objectives/Measures.} In learning to rank (LTR), we are given a set of queries $\Q=\{\q_1, \ldots, \q_n\}$. For each query, a set of items with relevance scores are provided $\S_q=\{(\x^q_1, y^q_1), \ldots, (\x^q_{n_q}, y^q_{n_q})\}$, where $\x^q_i$ denotes the input data, and  $y^q_i\in\R^+$ denotes its a relevance score with $y^q_i=0$ meaning irrelevant. For LTR, there are many listwise objectives and measures that can be formulated as FCCR, e.g., ListNet~\citep{ListNet}, ListMLE~\citep{ListMLE} , NDCG~\citep{wang2013theoretical}. Due to the limited space, we only consider that of ListNet. The objective function of ListNet can be  defined by a cross-entropy loss between two probabilities of list of scores:
\begin{align*}
	&F(\w) = -\sum_{q}\sum_{\x^q_i\in\S_q}P(y^q_i)\log \frac{\exp(h_\w(\x^q_i; \q)}{\sum_{\x\in\S_q}\exp(h_\w(\x; \q))},
\end{align*}
where $h_\w(\x^q_i; \q)$ denotes the prediction score of the item $\x^q_i$ with respect to the query $\q$, $P(y^q_i)$ denotes a probability for a relevance score $y^q_i$ (e.g., $P(y^q_i)\propto y^q_i$). We can map the above function into FCCR, where $g(\w; \x^q_i, \S_q) = \frac{1}{|\S_q|}\sum_{\x\in\S_q}\exp(h_\w(\x; \q)-h_\w(\x^q_i; \q))$ and $f(g) = \log (g)$, $\D=\{(\q, \x^q_i): P(y^q_i)>0\}$.  The original paper of ListNet uses a gradient method for optimizing the above objective, which has a complexity of $O(|\Q||\S_q|)$ and  is inefficient when $\S_q$ contains a large number of items.  

{\bf Deep Survival Analysis (DSA).} The survival analysis in medicine is to explore and understand the relationships between patients’ covariates (e.g., clinical and genetic features) and the effectiveness of various treatment options. Using the Cox model for modeling the hazard function, the negative log-likelihood can be written as~\citep{deepsurv}:
\begin{align*}
	F(\w) = \frac{1}{n}\sum_{i: E_i=1} \log\left( \sum_{j\in\S(T_i)}\exp(h_\w(\x_j)-h_\w(\x_i)\right),
\end{align*}
where $\x_i$ denote the input feature of a patient,  $h_\w(\x_i)$ denotes the risk value predicted by the network,  $E_i=1$ denotes an observable event of interest (e.g., death), $T_i$ denotes the time interval  between the time in which the baseline data was collected and the time of the event occurring, and $\S(t)=\{i: T_i\geq t\}$ denotes the set of patients still at risk of failure at time $t$. This is similar to the objective of ListMLE. The proposed algorithm is appropriate when both $\{i:E_i=1\}$ and $\S(T_i)$ are large. 

{\bf Deep Latent Variable Models (DLVM).}  Latent variable models refer to a family of generative models that use latent variables to model the observed data, where we consider the supervised learning setting. 
In particular, given a set of observed data $\D=\{(\x_1, y_1), \ldots, (\x_n, y_n)\}$, we model the probability of $\Pr(y|\x)$ by introducing a discrete latent variable $\z$, i.e.,  $\Pr(y|\x)=\sum_{\z\in\Z}\Pr(y|\x, \z)\Pr(\z|\x)$, where $\Z$ denotes the support set of the latent variable $\z$ and both $\Pr(y|\x, \z)$ and $\Pr(\z|\x)$ could be parameterized by a deep neural network. Then by minimizing negative log-likelihood of observed data, we have the objective function $
	F(\w) = -\sum_{(\x_i, y_i)\in\D} \log \sum_{\z\in\Z} \Pr(y_i|\z, \x_i)\Pr(\z|\x_i)$.
When $\Z$ is a large set, evaluating the inner sum is expensive.  While the above problem is traditionally solved by EM-type algorithms, a stochastic algorithm based on backpropogation is used more often in modern deep learning. We consider an application in NLP for retrieve-and-predict language model pre-training~\citep{DBLP:conf/icml/GuuLTPC20}. In particular, $\x_i$ denotes an masked input sentence, $y$ denotes masked tokens, $\z$ denotes a document from a large corpus $\Z$ (e.g., wikipedia). In~\citet{DBLP:conf/icml/GuuLTPC20}, $\Pr(\z|\x_i)= \frac{\exp(E(\x)^{\top}E(\z)) }{\sum_{\z'\in\Z}\exp(E(\x)^{\top}E(\z'))}$, where $E(\cdot)$ is a document embedding network,  and $\Pr(y|\x, \z)$ is computed by a masked language model that a joint embedding $\x, \z$ is used to make the prediction. Hence, we can write $F(\w)$ as
\begin{align*} 
	F(\w) & = -\sum_{i=1}^n \log \sum_{\z\in\Z} \Pr(y_i|\z, \x_i)\exp(E(\x_i)^{\top}E(\z)))  +\sum_{i=1}^n\log(\sum_{\z'\in\Z}\exp(E(\x_i)^{\top}E(\z'))).
\end{align*}
Note that both terms in the above is a special case of FCCR. The proposed algorithm gives an efficient way to solve this problem when $\Z$ is very large. \citet{DBLP:conf/icml/GuuLTPC20} address the challenge by approximating the inner summation by summing over the top $k$ documents with highest probability under $\Pr(\z| \x)$, which is retrieved by using maximum inner product search with a running time and storage space that scale sub-linearly with the number of documents. In contrast, SOX has a complexity independent of the number of documents per-iteration, which  depends on the batch size. 

{\bf Softmax Functions.} One might notice that in the considered problems ListNet, ListMLE, NCA, DSA, DLVM, a common function that causes the difficulty in optimization is the softmax function in the form $\frac{\exp(h(\x_i))}{\sum_{\x\in\X}\exp(h(\x))}$ for a target item $\x_i$ out of a large number  items in $\X$. This also occurs in NLP pre-training methods that predicts masked tokens out of  billions/trillions of tokens~\citep{borgeaud2022improving}. Taking the logarithmic of the softmax function gives the coupled compositional form $\log\sum_{\x'\in\X}\exp(h(\x)-h(\x_i))$, and summing over all items gives the considered FCCR.

\section*{Acknowledgements}

We thank anonymous reviewers and Yao Yao (UIowa) for spotting several mistakes in the proof and Gang Li (UIowa) for discussing the experiments on $p$-norm push optimization. This work is partially supported by NSF Grant 2110545, NSF Career Award 1844403.

\newpage
\appendix

\section{Omitted Experimental Results}

\subsection{AP Maximization}
We provide the curves of training loss in Figure~\ref{fig:AP_tr_loss}. Here we also present the curves of training average precision.
\begin{figure*}[h]
	\subfigure[Varying $B_1$]{
		\centering
		\includegraphics[width=0.3\linewidth]{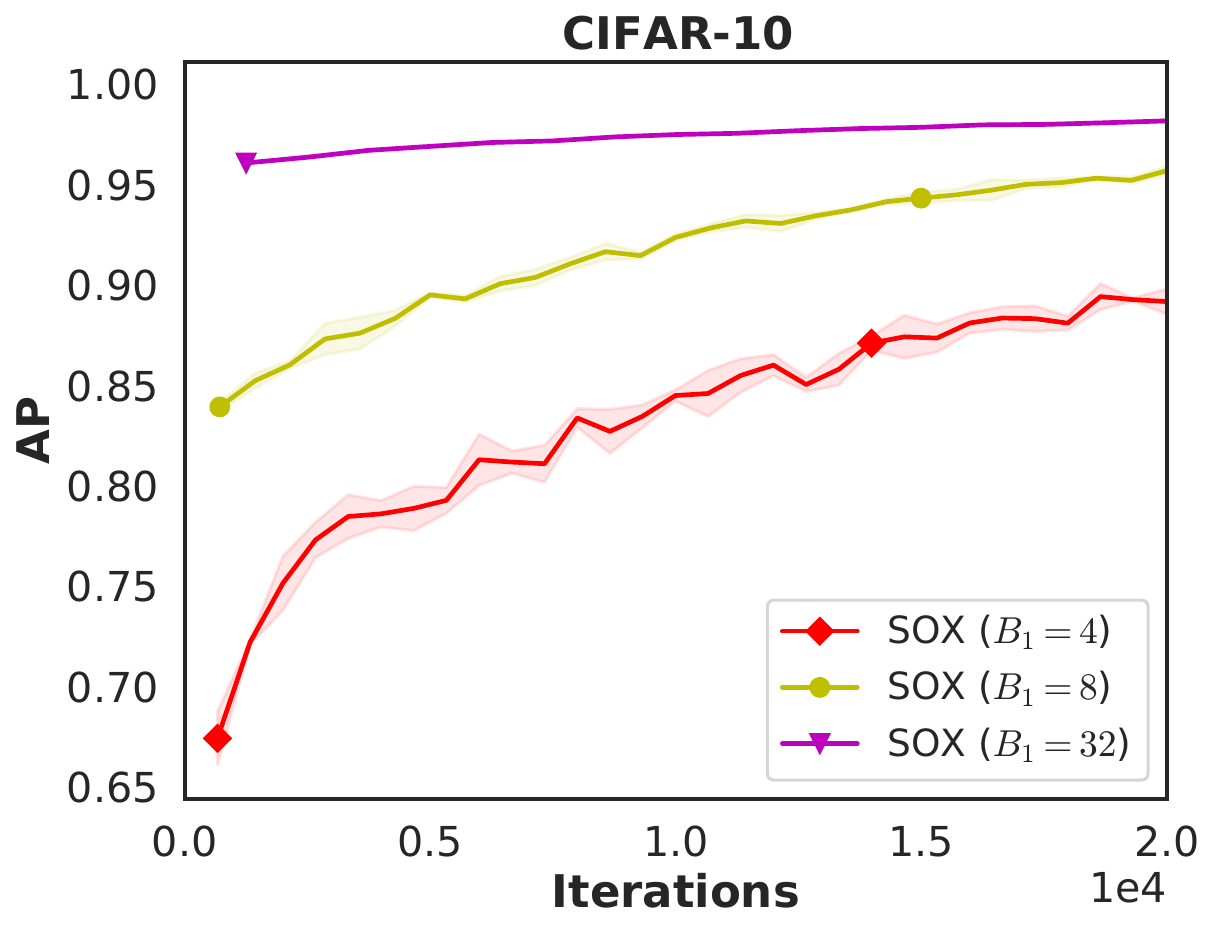}
	}
	\hfill
	\subfigure[Varying $B$]{	\centering
		\includegraphics[width=0.3\linewidth]{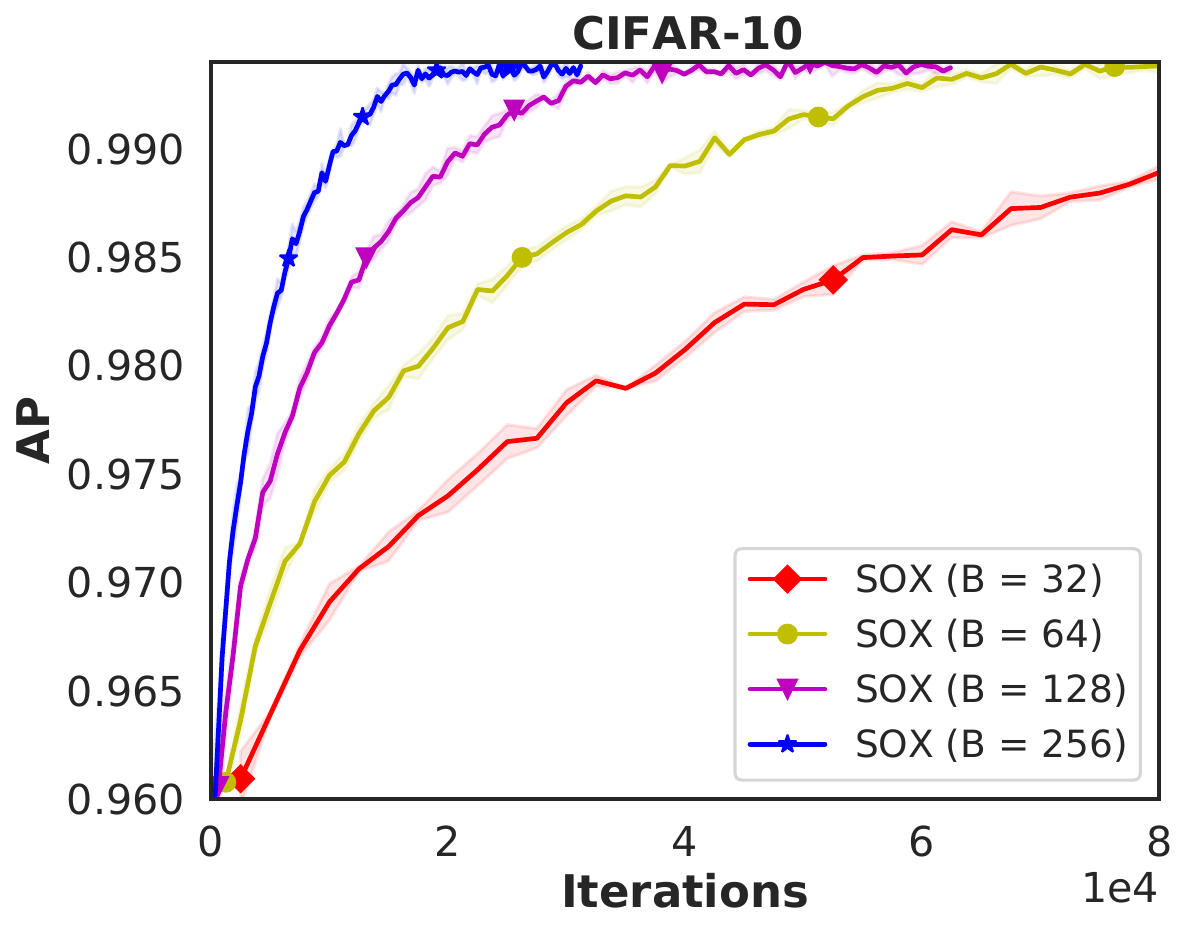}
	}
	\hfill
	\subfigure[Varying $\gamma$]{	\centering
		\includegraphics[width=0.3\linewidth]{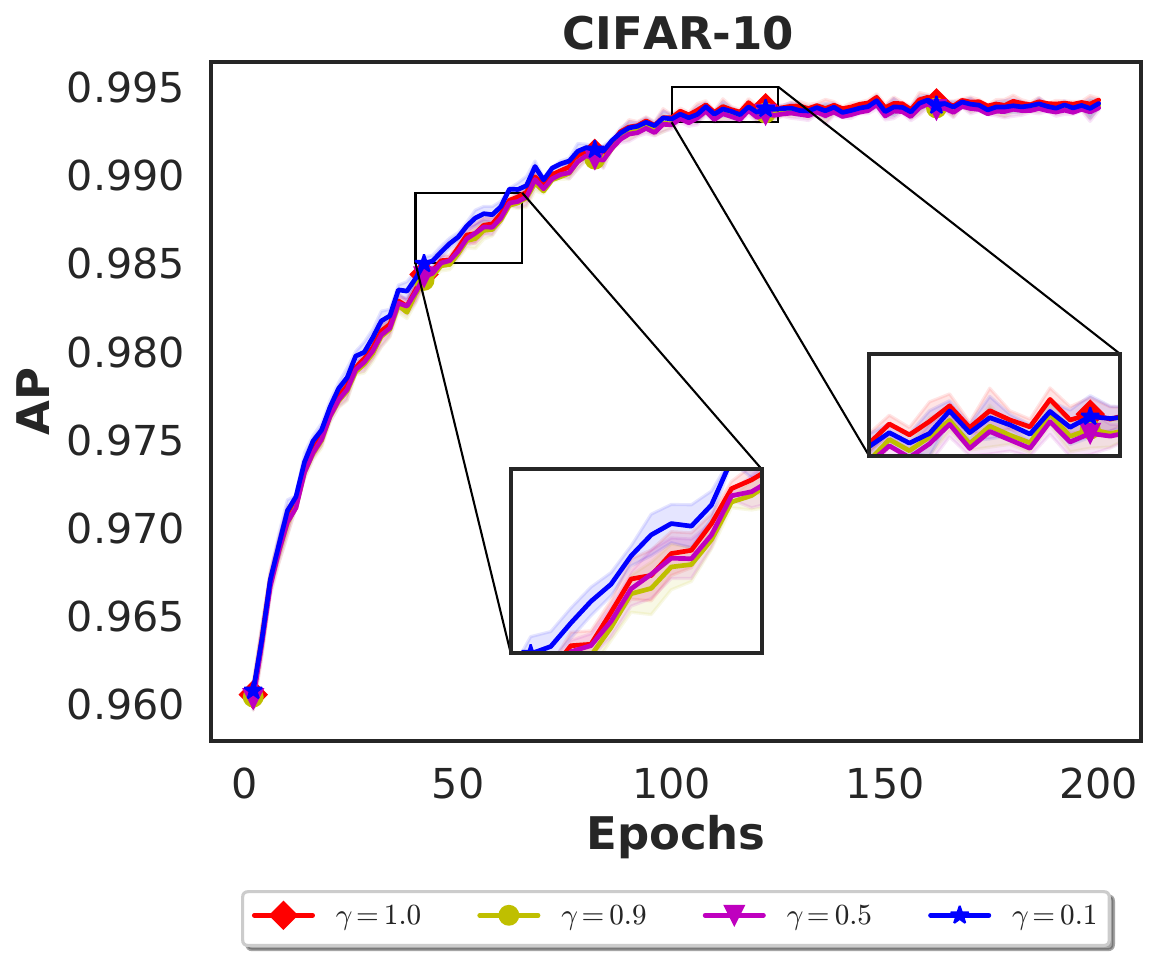}
	}
	\hfill
	\subfigure[vs. Baselines]{	\centering
		\includegraphics[width=0.3\linewidth]{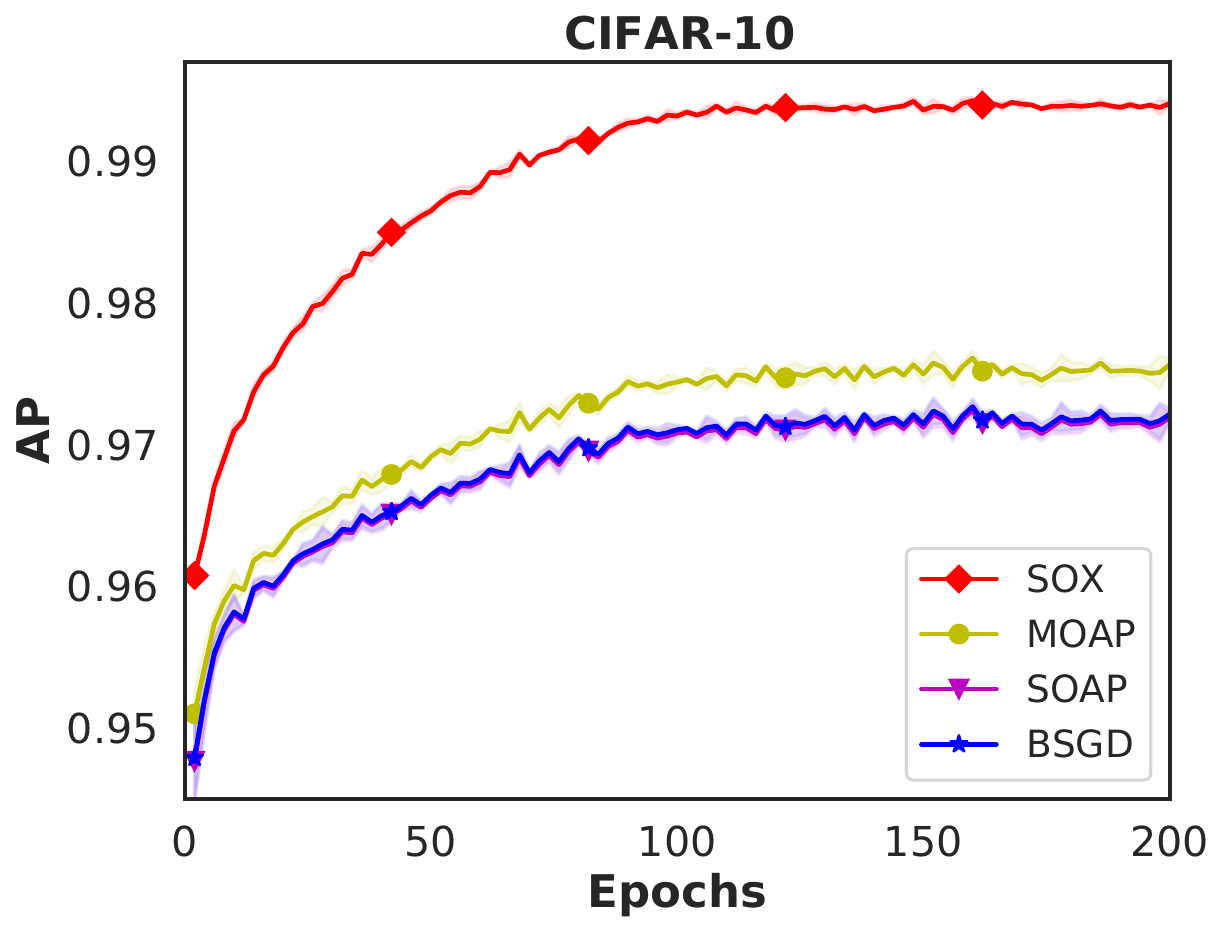}
	}
	\hfill
	\subfigure[Varying $B_1$]{
		\centering
		\includegraphics[width=0.3\linewidth]{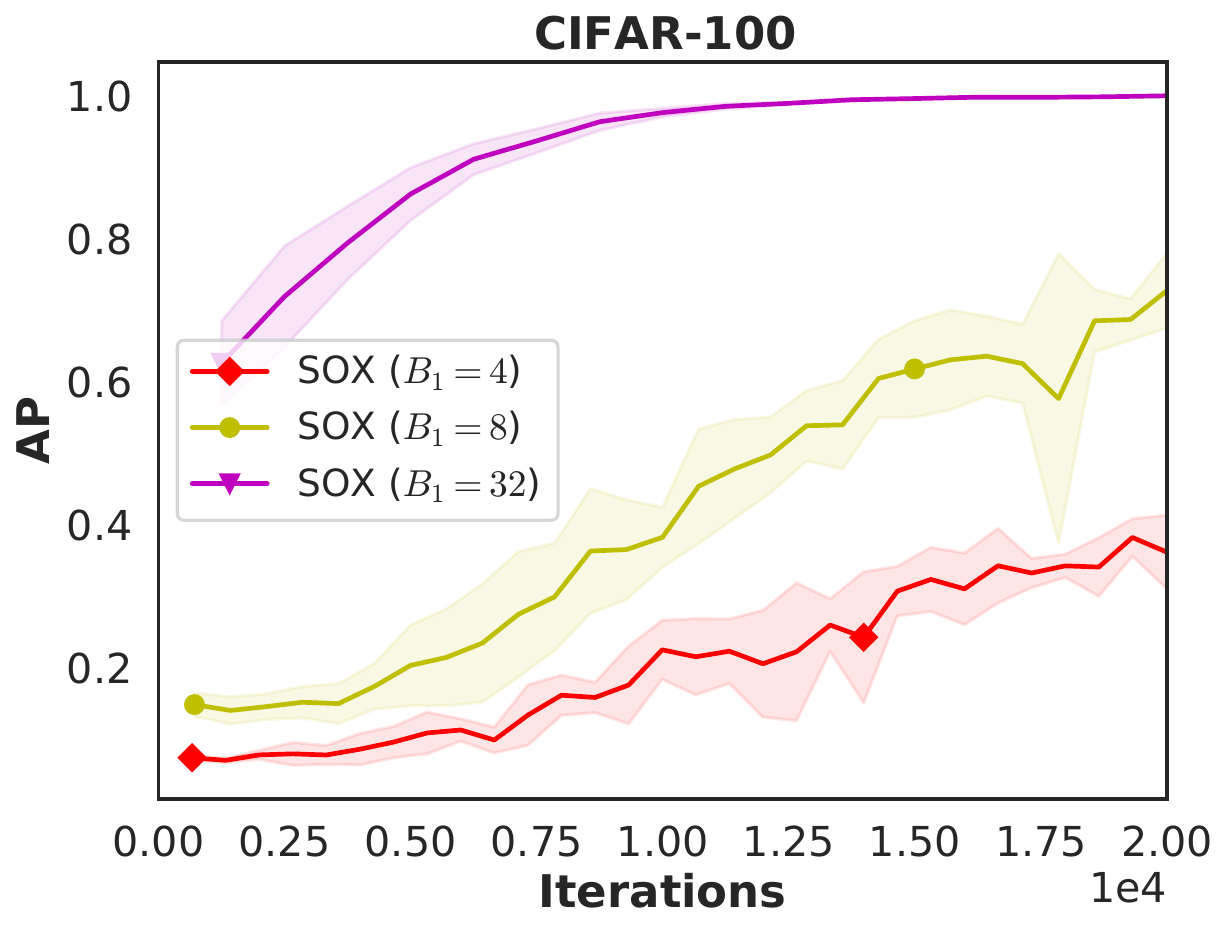}
	}
	\hfill
	\subfigure[Varying $B$]{	\centering
		\includegraphics[width=0.3\linewidth]{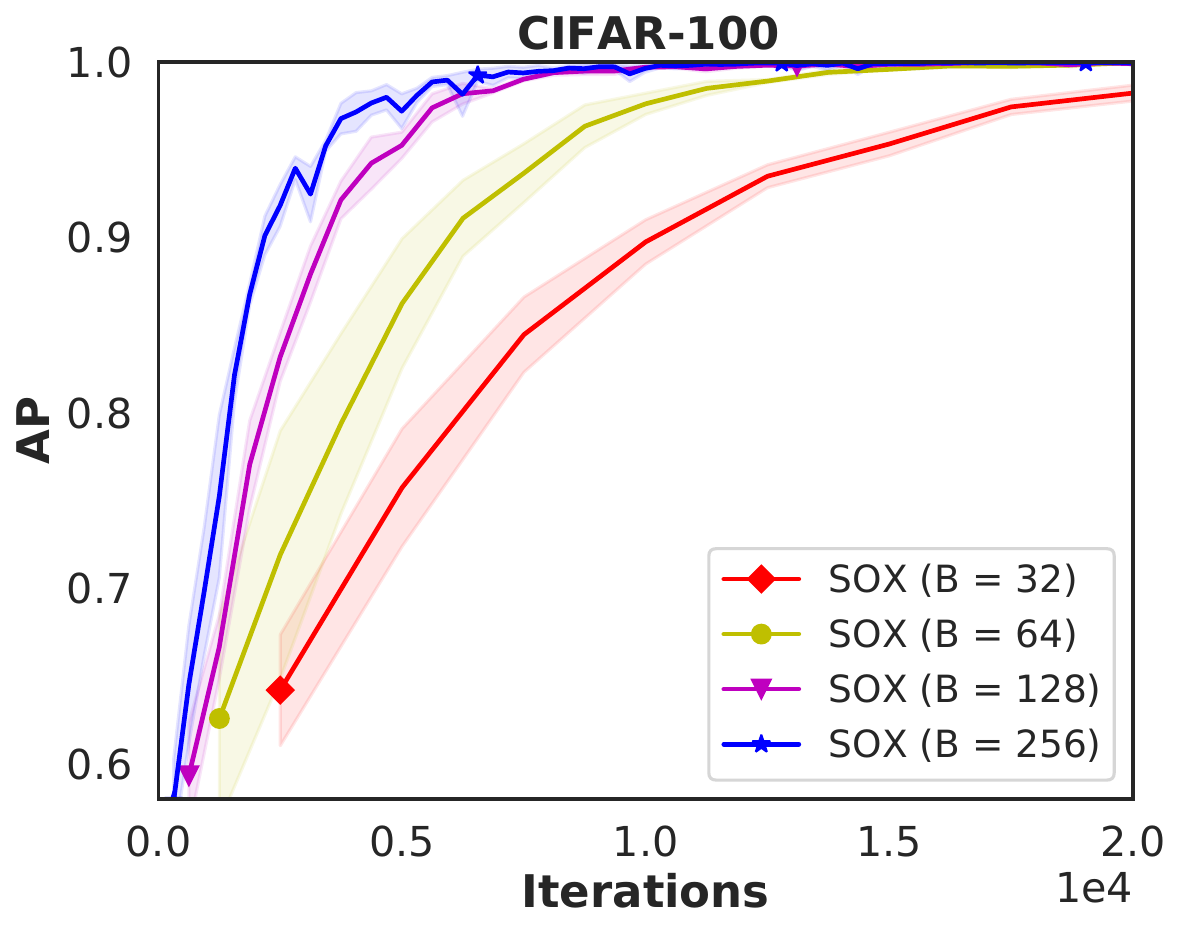}
	}
	\hfill
	\subfigure[Varying $\gamma$]{	\centering
		\includegraphics[width=0.3\linewidth]{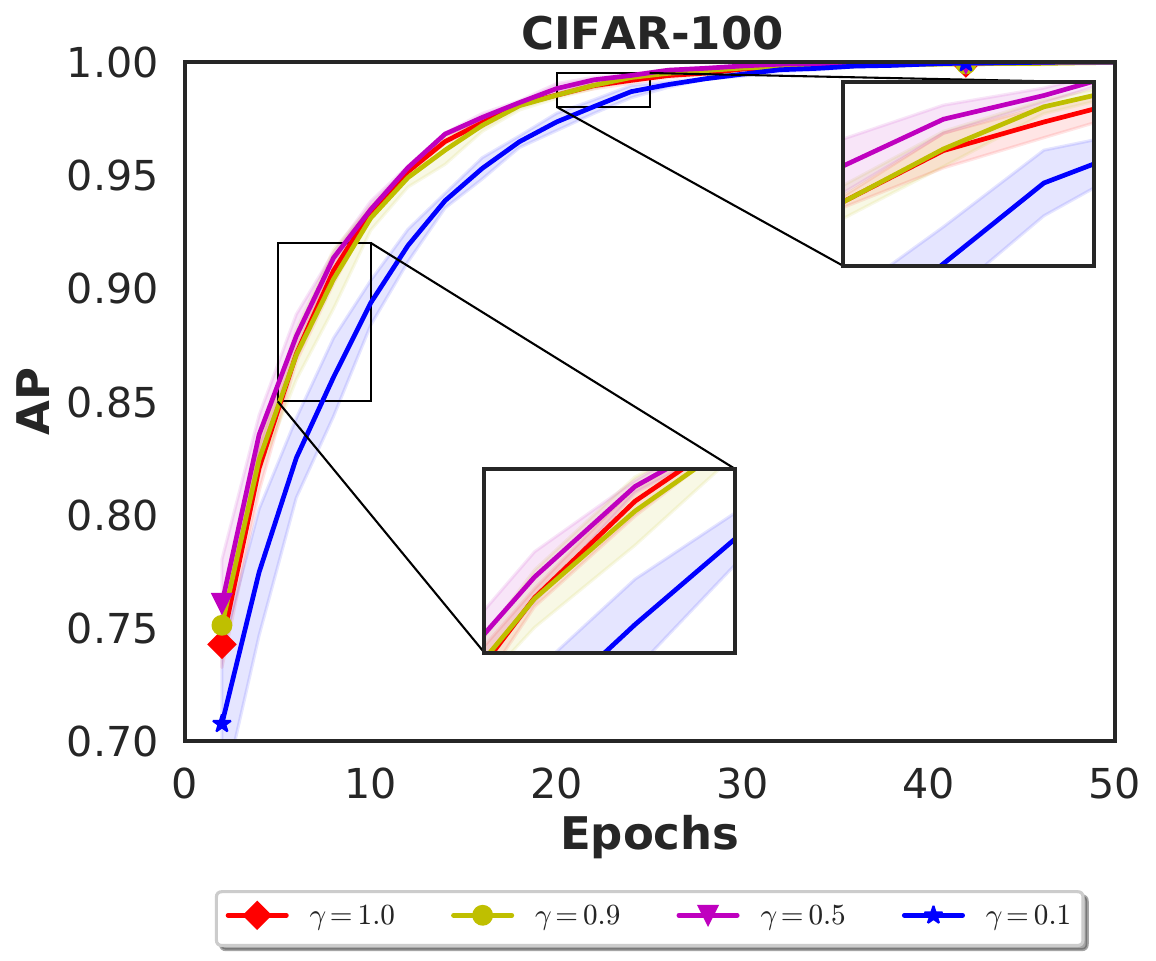}
	}
	\hfill
	\subfigure[vs. Baselines]{	\centering
		\includegraphics[width=0.3\linewidth]{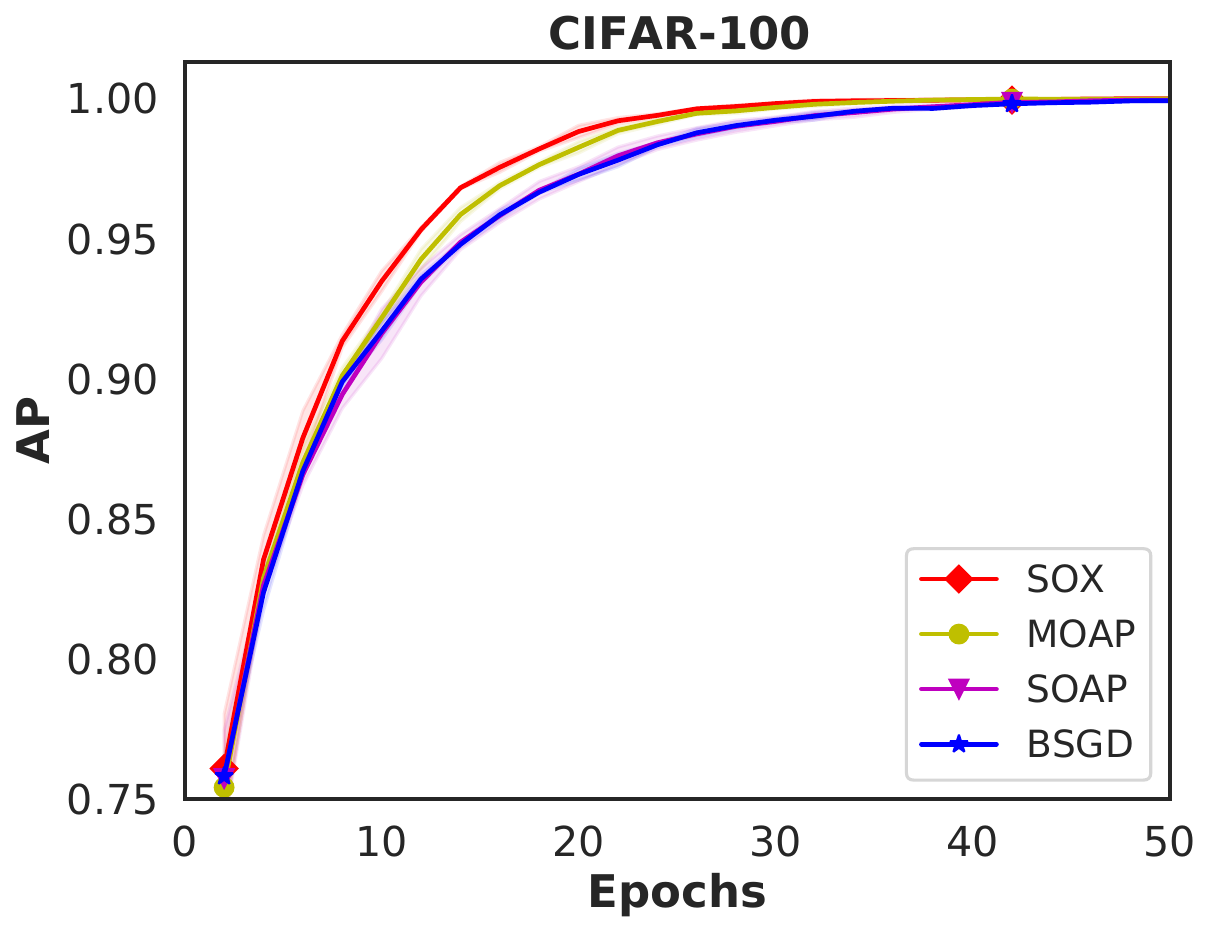}
	}
	\caption{Training average precision curves of AP maximization on the CIFAR-10 and CIFAR-100 data sets.}
	\label{fig:AP_tr_AP}
\end{figure*}

\subsection{Minimizing $p$-norm Push}
Table~\ref{tab:pnp_compare} and Figure~\ref{fig:pnorm-linear} (c)\&(f) show that SOX consistently outperforms BS-PnP/BSGD/MOAP in terms of $p$-norm push loss on the test data. SOX has better performance than SOAP on the covtype data while match its performance on ijcnn1. 
\begin{table}[htp]
	\centering 	\caption{Comparison among SOX and the baselines BS-PnP, BSGD for optimizing $p$-norm Push for learning a linear model. }
	\label{tab:pnp_compare}
	\scalebox{0.9}{	
		\begin{tabular}{cccccc}
			\toprule[.1em]
			\multicolumn{6}{c}{covtype} \\
			\midrule[0.01em]
			Algorithms & BS-PnP & BSGD & SOAP& MOAP& SOX \\\midrule[.05em]
			Test Loss ($\downarrow$) & 0.778 &0.625 $\pm$ 0.018 &0.523 $\pm$ 0.004&0.559 $\pm$ 0.011&{\bf 0.516 $\pm$ 0.003}\\\cmidrule(lr){1-1}
			Time (s) ($\downarrow$)&6043.90 & {\bf 4.20 $\pm$ 0.08}&4.32 $\pm$ 0.15 &4.89 $\pm$ 0.06&4.62 $\pm$ 0.10 \\	\midrule[0.1em]
			\multicolumn{6}{c}{ijcnn1} \\
			\midrule[0.01em]
			Algorithms & BS-PnP & BSGD & SOAP& MOAP& SOX \\\midrule[.05em]
			Test Loss ($\downarrow$) & 0.268 &0.202 $\pm$ 0.001 &{\bf 0.128 $\pm$ 0.002}&0.147 $\pm$ 0.001 &{\bf 0.128 $\pm$ 0.002}\\\cmidrule(lr){1-1}
			Time (s) ($\downarrow$)&648.06 & {\bf 4.02 $\pm$ 0.04}&4.04 $\pm$ 0.11 &4.42 $\pm$ 0.05&4.15 $\pm$ 0.06 \\
			\bottomrule
	\end{tabular}}
\end{table}

\begin{figure*}[h]
	\subfigure[Varying $B$]{
		\includegraphics[width=0.31\linewidth]{./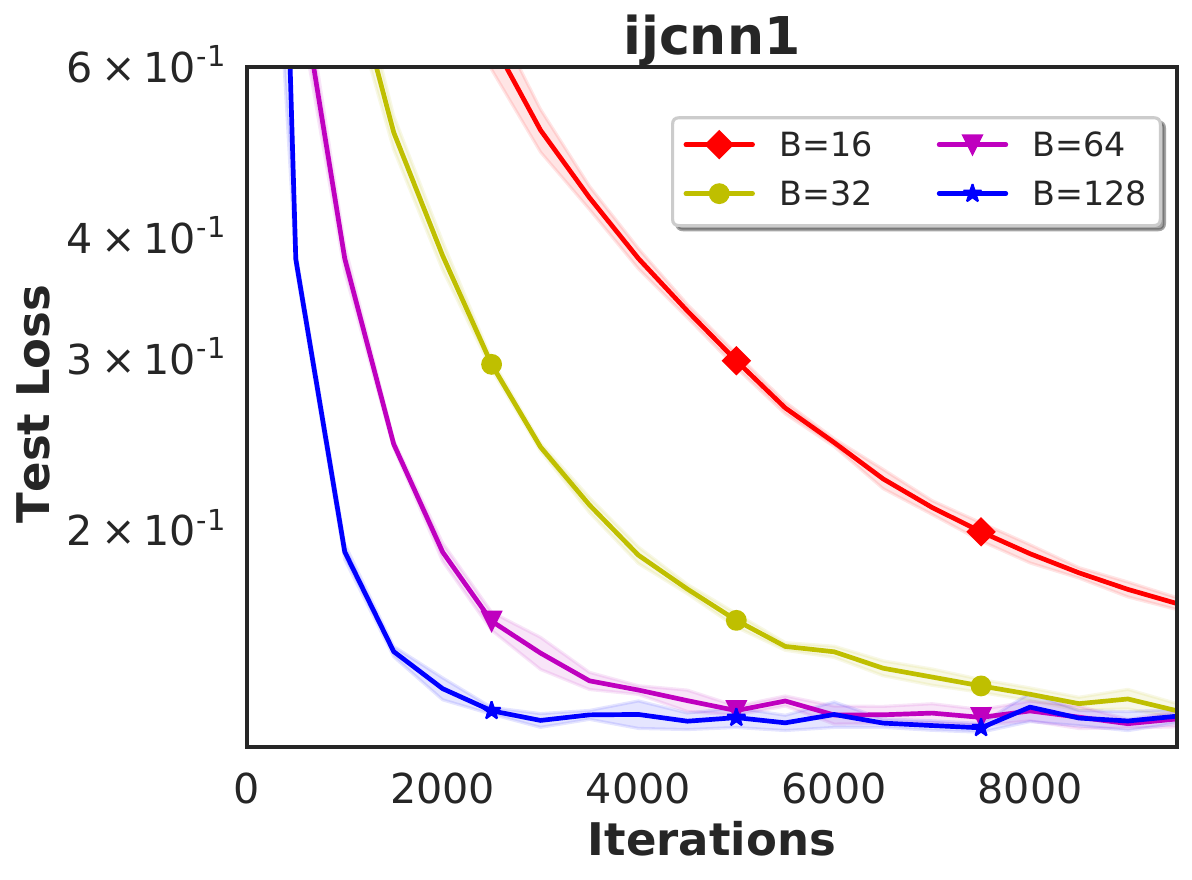}
	}\hfill
	\subfigure[Varying $\gamma$]{
		\includegraphics[width=0.31\linewidth]{./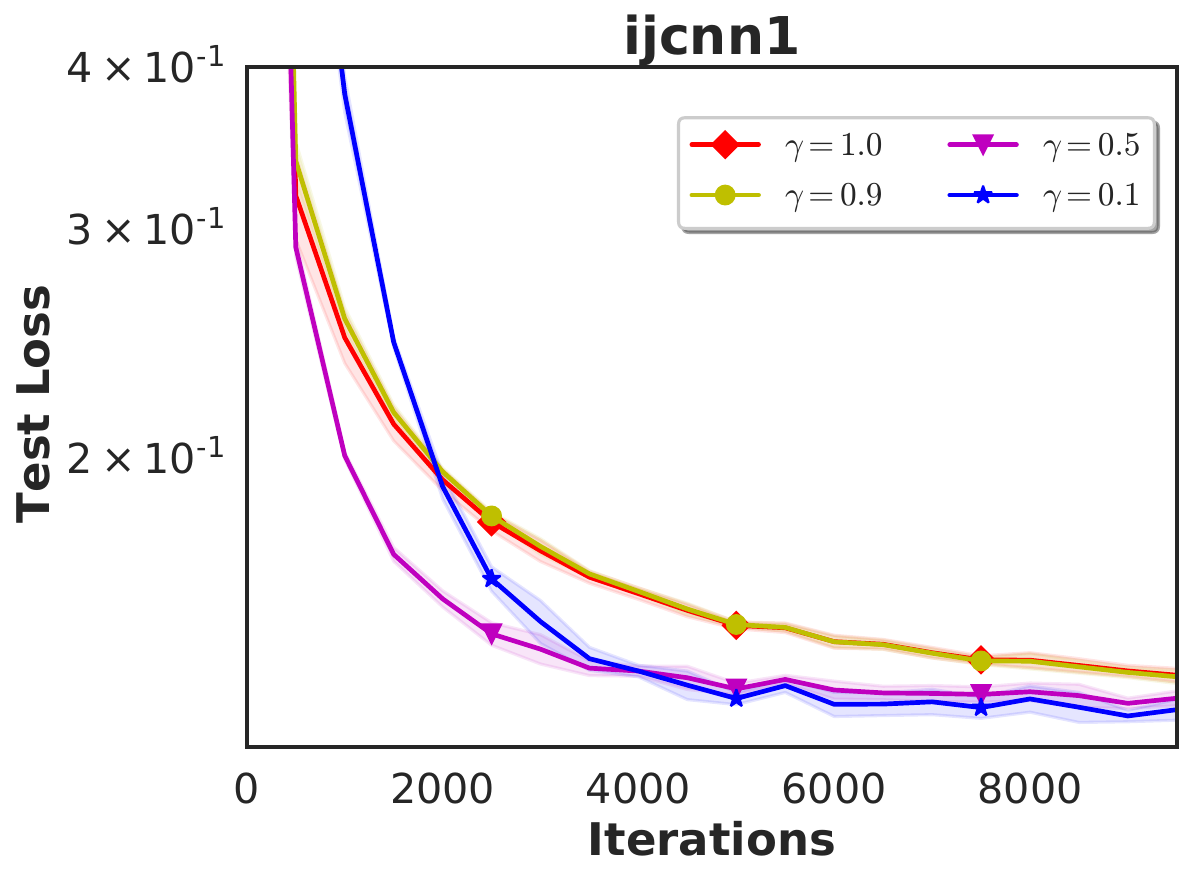}
	}\hfill
	\subfigure[v.s. Baselines]{
		\includegraphics[width=0.31\linewidth]{./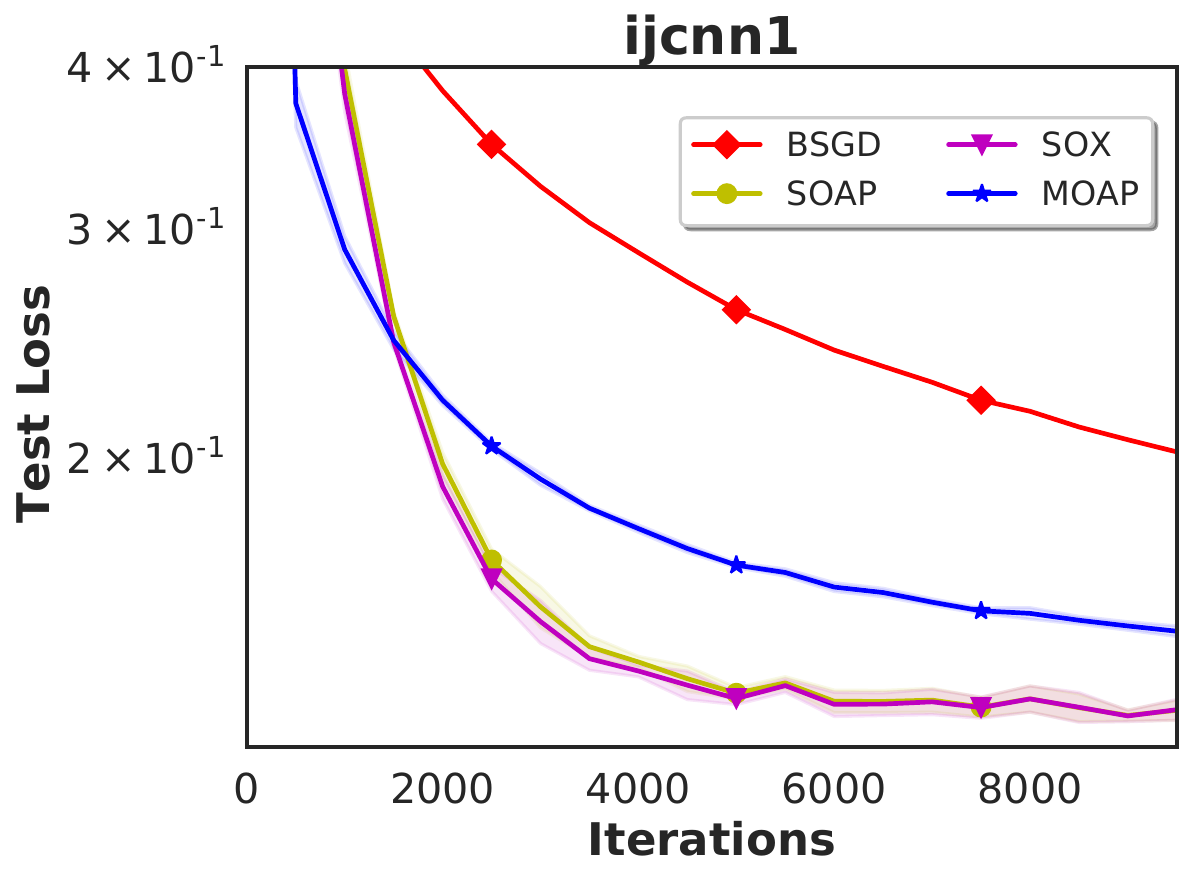}
	}\hfill
	\subfigure[Varying $B$]{
		\includegraphics[width=0.31\linewidth]{./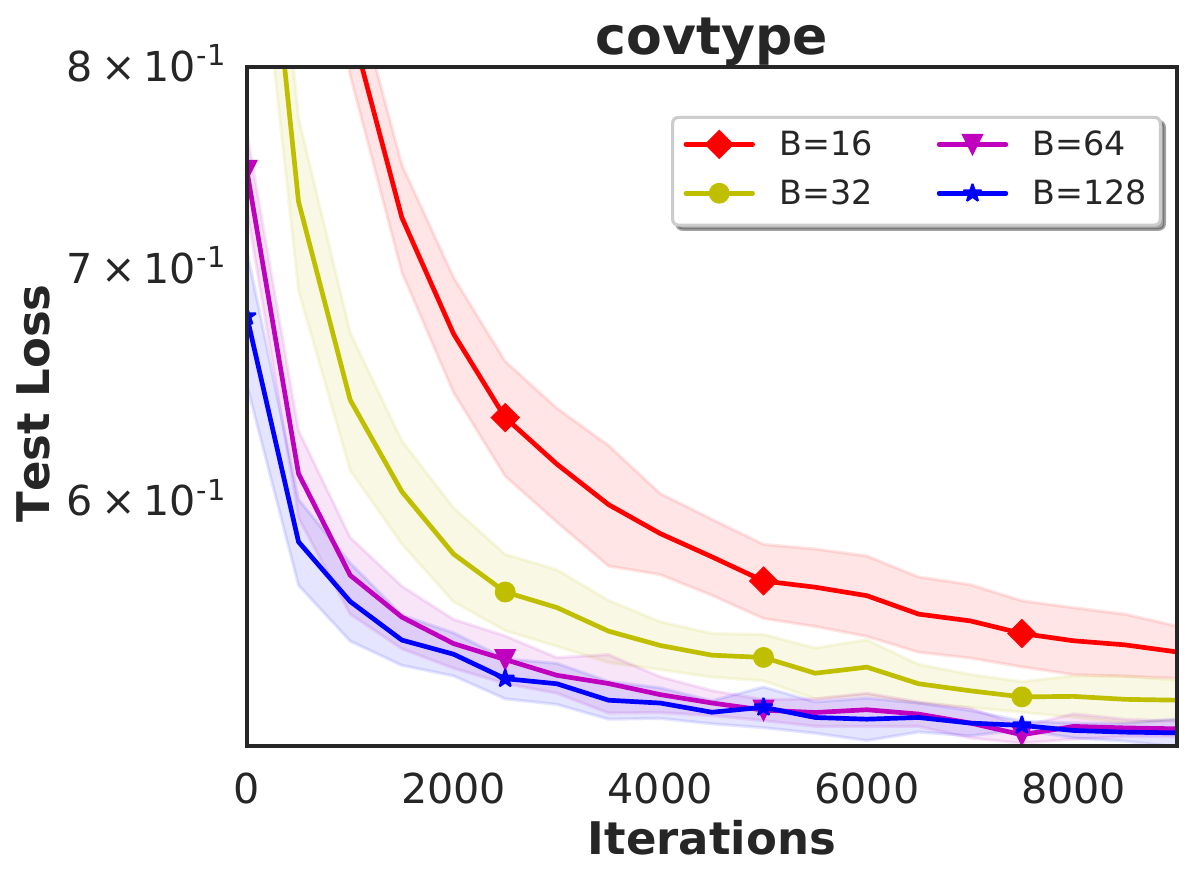}
	}\hfill
	\subfigure[Varying $\gamma$]{
		\includegraphics[width=0.31\linewidth]{./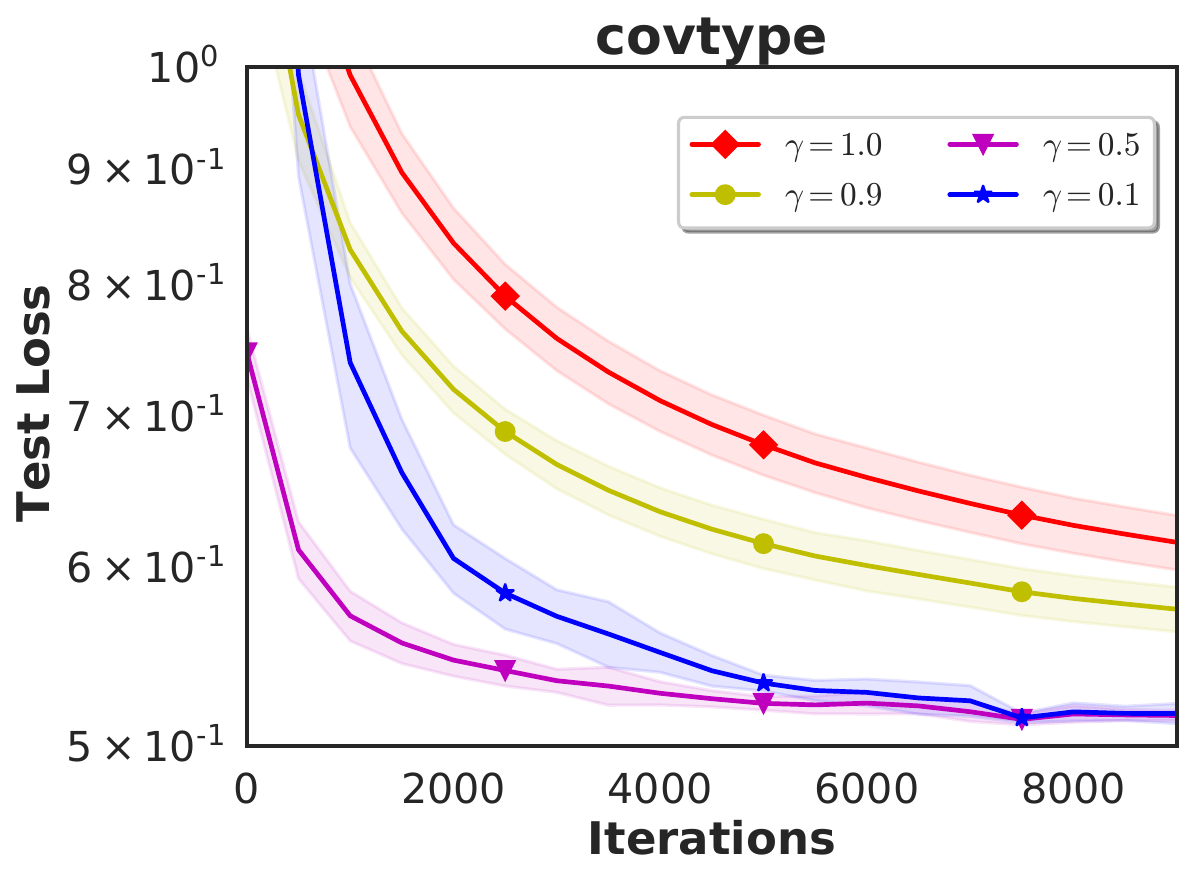}
	}\hfill
	\subfigure[v.s. Baselines]{
		\includegraphics[width=0.31\linewidth]{./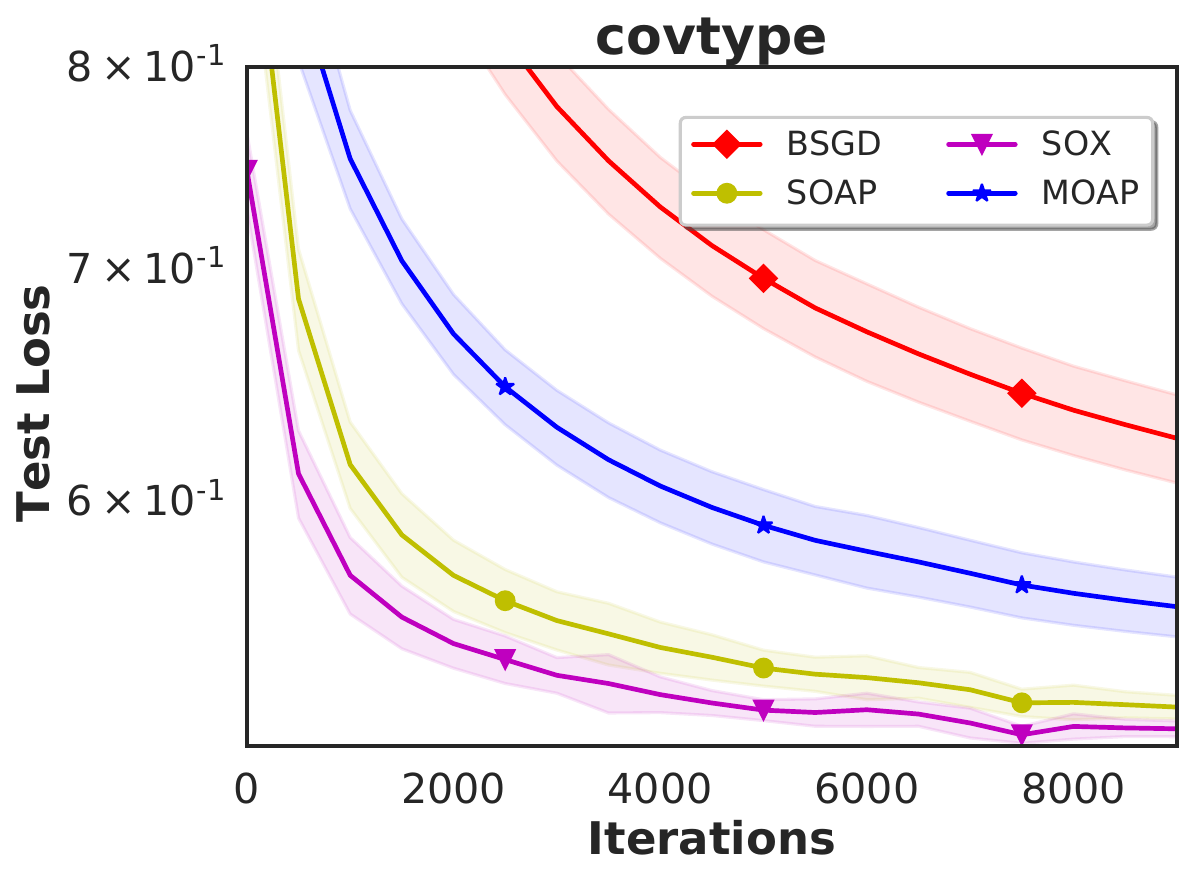}
	}\hfill
	\caption{Test loss curves of the $p$-norm push optimization task.}
	\label{fig:pnorm-linear}
\end{figure*}
Besides, we also empirically verify other aspects of the theory. In Figure~\ref{fig:pnorm-linear} (a)\&(d), we show that the iteration complexity of SOX decreases when $B$ increases; In Figure~\ref{fig:pnorm-linear} (b)\&(e), we show the effect of moving-average based estimation: $\gamma=1$ is not the best choice.

\section{Proof of Theorem~\ref{thm:sox}}
\begin{lemma}[Lemma 2 in \citealt{li2021page}]
	Consider a sequence $\w^{t+1} = \w^t - \eta \v^t$ and the $L_F$-smooth function $F$ and the step size $\eta L_F\leq 1/2$. 
	\begin{align}\label{eq:nonconvex_starter}
		F(\w^{t+1}) & \leq F(\w^t) + \frac{\eta}{2}\Delta^t - \frac{\eta}{2}\Norm{\nabla F(\w^t)}^2 - \frac{\eta}{4}\Norm{\v^t}^2,
	\end{align}	
	where $\Delta^t \coloneqq \Norm{\v^t - \nabla F(\w^t)}^2$.
	\label{lem:nonconvex_starter}
\end{lemma}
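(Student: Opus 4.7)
The plan is to derive this descent lemma directly from the $L_F$-smoothness of $F$ and a standard polarization identity, treating $\v^t$ as a biased estimator of $\nabla F(\w^t)$ whose error is quantified by $\Delta^t$.

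First I would invoke the smoothness upper bound: for any two points $\w, \w'$, an $L_F$-smooth function satisfies $F(\w') \leq F(\w) + \nabla F(\w)^\top(\w'-\w) + \frac{L_F}{2}\|\w'-\w\|^2$. Applying this at $\w'=\w^{t+1}$ and $\w=\w^t$ and substituting the update rule $\w^{t+1}-\w^t = -\eta\v^t$ produces
\begin{align*}
F(\w^{t+1}) \leq F(\w^t) - \eta \nabla F(\w^t)^\top \v^t + \frac{L_F\eta^2}{2}\|\v^t\|^2.
\end{align*}

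Next I would rewrite the cross-term $-\nabla F(\w^t)^\top\v^t$ using the polarization identity $-2a^\top b = \|a-b\|^2 - \|a\|^2 - \|b\|^2$ with $a=\nabla F(\w^t)$ and $b=\v^t$. This turns the cross-term into $\tfrac{1}{2}(\Delta^t - \|\nabla F(\w^t)\|^2 - \|\v^t\|^2)$, recognizing $\Delta^t = \|\v^t - \nabla F(\w^t)\|^2$. Plugging back gives
\begin{align*}
F(\w^{t+1}) \leq F(\w^t) + \tfrac{\eta}{2}\Delta^t - \tfrac{\eta}{2}\|\nabla F(\w^t)\|^2 - \tfrac{\eta}{2}(1-\eta L_F)\|\v^t\|^2.
\end{align*}

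Finally, using the hypothesis $\eta L_F \leq 1/2$, I get $1-\eta L_F \geq 1/2$, so the last coefficient satisfies $-\tfrac{\eta}{2}(1-\eta L_F) \leq -\tfrac{\eta}{4}$, which yields exactly the claimed inequality. There is no real obstacle here: the result is essentially the standard ``descent lemma for inexact gradient methods,'' and the only small care needed is in the sign of the polarization identity and in tracking the factor from $\eta L_F \le 1/2$ to get the $\tfrac{\eta}{4}\|\v^t\|^2$ term rather than something weaker.
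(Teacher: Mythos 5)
Your proof is correct, and it is exactly the standard derivation (smoothness descent inequality, polarization of the cross term, then $\eta L_F \le 1/2$ to absorb the quadratic) that underlies Lemma~2 of \citet{li2021page}; the paper itself does not reprove this lemma but simply cites that reference. No gaps — the algebra and the sign bookkeeping in your polarization step are all right.
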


We build a recursion for the gradient variance $\Delta^t \coloneqq \norm{\v^t - \nabla F(\w^t)}^2$ by proving the following lemma. \begin{lemma}\label{lem:grad_recursion}
	If $\beta\leq \frac{2}{7}$, the gradient variance $\Delta^t \coloneqq \norm{\v^t - \nabla F(\w^t)}^2$ can be bounded as
	\begin{align}\label{eq:grad_recursion}
\E\left[\Delta^{t+1}\right] & \leq (1-\beta)\E\left[\Delta^t\right] + \frac{2L_F^2\eta^2\E\left[\norm{\v^t}^2\right]}{\beta} + \colorbox{gray!40}{$\frac{3L_f^2C_1^2}{n}\E\left[\sum_{\z_i\in\B_1^t}\norm{u_i^{t+1}-u_i^t}^2\right] $} \\\nonumber
& \quad\quad +  \frac{2\beta^2 C_f^2(\zeta^2+C_g^2)}{\min\{B_1,B_2\}}+ 5\beta L_f^2C_1^2 \E\left[\Xi_{t+1}\right],
	\end{align}
	where $\Xi_{t} = \frac{1}{n}\norm{\u^t -\bg(\w^t;\S)}^2$, $\u^t = [u_1^t,\dotsc,u_n^t]^\top$, $\bg(\w^t;\S) = [g(\w^t;\z_1,\S_1),\dotsc, g(\w^t;\z_n,\S_n)]^\top$.
\end{lemma}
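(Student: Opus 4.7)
The plan is to follow a standard momentum-gradient-estimator analysis, carefully accounting for the extra bias introduced by using $u_i^t$ (rather than the true inner value $g(\w^{t+1};\z_i,\S_i)$) inside $\nabla f_i$. Denote the stochastic momentum increment by $G^{t+1}=\frac{1}{B_1}\sum_{\z_i\in\B_1^{t+1}}\nabla g(\w^{t+1};\z_i,\B_{i,2}^{t+1})\nabla f_i(u_i^t)$ and its conditional mean, given everything before drawing $\B_1^{t+1}$ and the $\{\B_{i,2}^{t+1}\}$, by $\bar G^{t+1}=\frac{1}{n}\sum_i \nabla g(\w^{t+1};\z_i,\S_i)\nabla f_i(u_i^t)$. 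The starting point is the decomposition
\begin{align*}
\v^{t+1}-\nabla F(\w^{t+1}) &= (1-\beta)\bigl(\v^t-\nabla F(\w^t)\bigr) + (1-\beta)\bigl(\nabla F(\w^t)-\nabla F(\w^{t+1})\bigr) \\
&\quad + \beta\bigl(G^{t+1}-\bar G^{t+1}\bigr) + \beta\bigl(\bar G^{t+1}-\nabla F(\w^{t+1})\bigr),
\end{align*}
which splits the error into a contraction term, a smoothness drift, a zero-mean mini-batch noise, and a deterministic bias.

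Next I would take conditional expectation w.r.t.\ the fresh randomness. The zero-mean noise piece drops out of every cross term and contributes only its second moment, while Young's inequality with coefficient $\beta/(1-\beta)$ cleanly separates the contraction from the remaining pieces---turning $(1-\beta)^2\Delta^t$ into $(1-\beta)\Delta^t$ and producing a factor $2/\beta$ on the drift and bias. This yields a master inequality of the form
\begin{align*}
\E[\Delta^{t+1}] &\leq (1-\beta)\E[\Delta^t] + \tfrac{2(1-\beta)^2}{\beta}\E\|\nabla F(\w^t)-\nabla F(\w^{t+1})\|^2 \\
&\quad + 2\beta\,\E\|\bar G^{t+1}-\nabla F(\w^{t+1})\|^2 + \beta^2\,\E\|G^{t+1}-\bar G^{t+1}\|^2.
\end{align*}

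I would then bound the three remaining pieces. For the drift, $L_F$-smoothness of $F$ gives $\|\nabla F(\w^t)-\nabla F(\w^{t+1})\|^2\leq L_F^2\eta^2\|\v^t\|^2$, and after absorbing $(1-\beta)^2\leq 1$ this matches the $2L_F^2\eta^2\|\v^t\|^2/\beta$ term. For the bias, Jensen together with Assumption~\ref{asm:lip} (smoothness of $f_i$ and boundedness of $\nabla g$) yields
\begin{align*}
\|\bar G^{t+1}-\nabla F(\w^{t+1})\|^2 \leq \frac{C_1^2 L_f^2}{n}\sum_i\|u_i^t-g(\w^{t+1};\z_i,\S_i)\|^2,
\end{align*}
and further splitting $\|u_i^t-g(\w^{t+1};\z_i,\S_i)\|^2\leq 2\|u_i^t-u_i^{t+1}\|^2+2\|u_i^{t+1}-g(\w^{t+1};\z_i,\S_i)\|^2$, together with the fact that $u_i^{t+1}=u_i^t$ for $\z_i\notin\B_1^{t+1}$, produces the highlighted $\sum\|u_i^{t+1}-u_i^t\|^2$ term (designed to cancel the negative term of Lemma~\ref{lem:fval_recursion}) and a $\Xi_{t+1}$ term. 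Finally, the variance of $G^{t+1}-\bar G^{t+1}$ is controlled by the law of total variance: conditioning on $\B_1^{t+1}$ first uses Assumption~\ref{asm:var} and $\|\nabla f_i\|\leq C_f$ to give an inner-level contribution of order $C_f^2\zeta^2/(B_1 B_2)$, and the outer sampling then contributes a term of order $C_f^2 C_g^2/B_1$; both are dominated by $C_f^2(\zeta^2+C_g^2)/\min\{B_1,B_2\}$.

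The main technical hurdle is matching the coefficient on $\sum_i\|u_i^{t+1}-u_i^t\|^2$ with what Lemma~\ref{lem:fval_recursion} supplies on the negative side, so that in a Lyapunov combination of $F(\w^t)$, $\Delta^t$, and $\Xi_t$ this term can be cancelled. This is exactly where the subtle modification of Algorithm~\ref{alg:sox} emphasized in Section~\ref{sec:sox_comparison}---using $u_i^{t-1}$ in place of $u_i^t$ inside $\nabla f_i$---pays off: it makes $u_i^t$ independent of $\B_1^{t+1}$ and $\{\B_{i,2}^{t+1}\}$ and enables the clean conditioning argument that turns $(1-\beta)^2\|\v^t-\nabla F(\w^t)\|^2$ into a pure $(1-\beta)\Delta^t$ contraction. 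A secondary concern is ensuring the variance is truly of order $1/\min\{B_1,B_2\}$ rather than merely $1/B_1$, which requires treating the two independent samplings in the correct order before invoking Assumption~\ref{asm:var}.
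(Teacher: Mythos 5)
Your proposal is correct and follows essentially the same route as the paper: decompose the momentum error into a contraction term, a smoothness drift, a mean-zero mini-batch noise term, and a bias term caused by using $u_i^t$ in $\nabla f_i$; kill the cross terms with the noise by conditioning (which is exactly where the $u_i^{t-1}$ independence is used), apply Young's inequality to get the $(1-\beta)$ contraction and the $2L_F^2\eta^2\|\v^t\|^2/\beta$ drift, bound the bias by $L_f^2 C_1^2$ times the estimation error split into $\Xi_{t+1}$ plus the $\sum_{\z_i\in\B_1^t}\|u_i^{t+1}-u_i^t\|^2$ increments, and control the noise at order $C_f^2(\zeta^2+C_g^2)/\min\{B_1,B_2\}$ via the two levels of sampling. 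The only (harmless) difference is that you split the last two pieces at the conditional mean $\bar G^{t+1}$ rather than at the estimator evaluated at the true inner values as the paper does, which yields slightly different intermediate constants (e.g.\ $4\beta$ instead of $5\beta$ on $\Xi_{t+1}$) that are still dominated by those in the lemma.
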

\begin{proof} The proof technique follows similarly as that in~\citep{Ghadimi2020AST}. 
	We define that $\Delta^t \coloneqq \Norm{\v^t - \nabla F(\w^t)}^2$ and $G(\w^{t+1}) = \frac{1}{B_1}\sum_{\z_i\in\B_1^t}\nabla g(\w^{t+1};\z_i,\B_{i,2}^t) \nabla f(u^t_i)$. Based on the rule of update $\v^{t+1} = (1-\beta)\v^t + \beta G(\w^{t+1})$, we have
	\begin{align*}
		\Delta^{t+1}&= \Norm{\v^{t+1} - \nabla F(\w^{t+1})}^2 = \Norm{(1-\beta)\v^t + \beta \frac{1}{B_1}\sum_{\z_i\in\B_1^{t+1}} \nabla g(\w^{t+1}; \z_i, \B_{i,2}^{t+1}) \nabla f(u^t_i) - \nabla F(\w^{t+1})}^2\\
		& = \Norm{\tcircle{1} + \tcircle{2} + \tcircle{3}  + \tcircle{4}}^2,
	\end{align*}
	where $\tcircle{1}$, $\tcircle{2}$, $\tcircle{3}$, $\tcircle{4}$ are defined as
	\begin{align*}
		& \tcircle{1} = (1-\beta)(\v^t - \nabla F(\w^t)),\quad \tcircle{2} = (1-\beta)(\nabla F(\w^t) - \nabla F(\w^{t+1})),\\
		& \tcircle{3} = \beta \frac{1}{B_1}\sum_{\z_i\in\B_1^{t+1}}\left(\nabla g(\w^{t+1};\z_i, \B_{i,2}^{t+1})\nabla f(u^t_i) - \nabla g(\w^{t+1};\z_i, \B_{i,2}^{t+1}) \nabla f(g(\w^{t+1};\z_i,\S_i))\right),\\
		& \tcircle{4} = \beta \left(\frac{1}{B_1}\sum_{\z_i\in\B_1^{t+1}}  \nabla g(\w^{t+1};\z_i,\B_{i,2}^{t+1})\nabla f(g(\w^{t+1};\z_i,\S_i))-\nabla F(\w^{t+1})\right).
	\end{align*}
	Note that $\E_t\left[\inner{\tcircle{1}}{\tcircle{4}}\right] = \E_t\left[\inner{\tcircle{2}}{\tcircle{4}}\right] = 0$. Then, the Young's inequality for products implies that
	\begin{align*}
		& \E_t\left[\Norm{\tcircle{1} + \tcircle{2} + \tcircle{3}  + \tcircle{4}}^2\right] \\
		&= \Norm{\tcircle{1}}^2 + \Norm{\tcircle{2}}^2 + \E_t\left[\Norm{\tcircle{3}}^2\right] + \E_t\left[\Norm{\tcircle{4}}^2\right] + 2\inner{\tcircle{1}}{\tcircle{2}} + 2\E_t\left[\inner{\tcircle{1}}{\tcircle{3}}\right] + 2\E_t\left[\inner{\tcircle{2}}{\tcircle{3}}\right] +  2\E_t\left[\inner{\tcircle{3}}{\tcircle{4}}\right]\\
		& \leq (1+\beta)\Norm{\tcircle{1}}^2 + 2\left(1+\frac{1}{\beta}\right)\Norm{\tcircle{2}}^2 + \frac{2+3\beta}{\beta}\E_t\left[\Norm{\tcircle{3}}^2\right] + 2\E_t\left[\Norm{\tcircle{4}}^2\right].
	\end{align*}
	Besides, we have
	\begin{align*}
		& (1+\beta)\Norm{\tcircle{1}}^2 = (1+\beta)(1-\beta)^2\Norm{\v^t - \nabla F(\w^t)}^2 \leq (1-\beta)\Norm{\v^t - \nabla F(\w^t)}^2,\\
		& 2\left(1+\frac{1}{\beta}\right)\Norm{\tcircle{2}}^2 = 2\left(1+\frac{1}{\beta}\right)(1-\beta)^2\Norm{\nabla F(\w^t) - \nabla F(\w^{t+1})}^2 \leq \frac{2L_F^2 \eta^2}{\beta}\Norm{\v^t}^2,\\
		& \frac{2+3\beta}{\beta}\Norm{\tcircle{3}}^2 = \frac{2+3\beta}{\beta}\frac{\beta^2}{B_1}\sum_{\z_i\in\B_1^{t+1}}\Norm{\nabla g(\w^{t+1};\z_i,\B_{i,2}^{t+1})}^2\Norm{\nabla f(u^t_i) - \nabla f(g(\w^{t+1};\z_i,\S_i))}^2\\
		&\quad\quad\quad\quad\quad\quad \leq \frac{(2+3\beta)\beta L_f^2}{B_1}\sum_{\z_i\in\B_1^{t+1}} \Norm{\nabla g(\w^{t+1};\z_i,\B_{i,2}^{t+1})}^2\Norm{u^t_i - g(\w^{t+1};\z_i,\S_i)}^2.
	\end{align*}
	Consider that $\w^{t+1}$ and $u_i^t$ do not depend on either $\B_1^{t+1}$ or $\B_{i,2}^{t+1}$.
	\begin{align*}
		& (2+3\beta)\beta L_f^2 \E_t\left[\frac{1}{B_1}\sum_{\z_i\in\B_1^{t+1}} \Norm{\nabla g(\w^{t+1};\z_i,\B_{i,2}^{t+1})}^2\Norm{u^t_i - g(\w^{t+1};\z_i,\S_i)}^2\right]\\ & = (2+3\beta)\beta L_f^2 \E_t\left[\frac{1}{B_1}\sum_{\z_i\in\B_1^{t+1}} \E_t\left[\Norm{\nabla g(\w^{t+1};\z_i,\B_{i,2}^{t+1})}^2\mid \z_i\in\B_1^{t+1}\right]\Norm{u^t_i - g(\w^{t+1};\z_i,\S_i)}^2\right]\\
		& \leq (2+3\beta)\beta L_f^2 C_1^2 \E_t\left[\frac{1}{B_1}\sum_{\z_i\in\B_1^{t+1}}\Norm{u^t_i - g(\w^{t+1};\z_i,\S_i)}^2 \right] \\
		&  \leq \frac{(2+3\beta)\beta(1+\delta) L_f^2 C_1^2}{n}\sum_{\z_i\in\D} \E_t\left[\Norm{u^{t+1}_i - g(\w^{t+1};\z_i,\S_i)}^2\right] \\
		&\quad\quad +\frac{(2+3\beta)\beta(1+1/\delta) L_f^2 C_1^2}{n}\E_t\left[\sum_{\z_i\in\D} \Norm{u^{t+1}_i -u^t_i}^2 \right],
	\end{align*}
	where $C_1^2\coloneqq C_g^2 + \zeta^2/B$ and $\delta>0$ is a constant to be determined later. Note that we have $u_i^{t+1} = u_i^t$ for all $i\notin \B_1^t$. 
	\begin{align*}
		& (2+3\beta)\beta L_f^2 \E\left[\frac{1}{B_1}\sum_{\z_i\in\B_1^t} \Norm{\nabla g(\w^{t+1};\z_i,\B_{i,2}^t)}^2\Norm{u^t_i - g(\w^{t+1};\z_i,\S_i)}^2\right]\\ 
		& \leq (2+3\beta)\beta\left((1+\delta) L_f^2 C_1^2 \E\left[ \frac{1}{n}\sum_{\z_i\in\D}\Norm{u^{t+1}_i - g(\w^{t+1};\z_i,\S_i)}^2\right] \right.\\
		&\quad\quad\quad\quad\quad\quad\quad\quad \left. + (1+1/\delta) L_f^2 C_1^2\E\left[\frac{1}{n}\sum_{\z_i\in\B_1^t} \Norm{u^{t+1}_i -u^t_i}^2 \right]\right).
	\end{align*}
	If $\beta\leq \frac{2}{7}$ and $\delta = \frac{3\beta}{2}$, we have $(2+3\beta)\beta(1+\delta)\leq 5\beta$ and $(2+3\beta)\beta(1+1/\delta)\leq 3$. 
	\begin{align*}
		\E\left[\frac{2+3\beta}{\beta}\Norm{\tcircle{3}}^2\right] & \leq 5\beta L_f^2C_1^2 \E\left[\Xi_{t+1}\right] + \frac{3L_f^2 C_1^2}{n}\E\left[\sum_{\z_i\in\B_1^t}\Norm{u_i^{t+1} - u_i^t}^2\right].
	\end{align*}
	Next, we upper bound the term $\E_t\left[\Norm{\tcircle{4}}^2\right]$.
	\begin{align*}
		& \E_t\left[\Norm{\tcircle{4}}^2\right] \\
		& = \beta^2\E_t\left[\Norm{\frac{1}{B_1}\sum_{\z_i\in\B_1^t}\nabla g(\w^{t+1};\z_i,\B_{i,2}^t)\nabla f(g(\w^{t+1};\z_i,\S_i)) - \frac{1}{n}\sum_{\z_i\in\D} \nabla g(\w^{t+1};\z_i,\S_i)\nabla f(g(\w^{t+1};\z_i,\S_i))}^2\right]\\
		& = \beta^2 \E_t\left[\Norm{\frac{1}{B_1}\sum_{\z_i\in\B_1^t}\nabla g(\w^{t+1};\z_i,\B_{i,2}^t) \nabla f(g(\w^{t+1};\z_i,\S_i)) - \frac{1}{B_1}\sum_{\z_i\in\B_1^t}\nabla g(\w^{t+1};\z_i,\S_i) \nabla f(g(\w^{t+1};\z_i,\S_i)) }^2\right]\\
		& +\beta^2 \E_t\left[\Norm{\frac{1}{B_1}\sum_{\z_i\in\B_1^t}\nabla g(\w^{t+1};\z_i,\S_i) \nabla f(g(\w^{t+1};\z_i,\S_i)) - \frac{1}{n}\sum_{\z_i\in\D}\nabla g(\w^{t+1};\z_i,\S_i) \nabla f(g(\w^{t+1};\z_i,\S_i)) }^2\right]\\
		& \leq 
		\frac{\beta^2C_f^2(\zeta^2 + C_g^2)}{\min\{B_1, B_2\}}.
	\end{align*}
\end{proof}

\subsection{Proof of Lemma~\ref{lem:fval_recursion}}
Based on Algorithm~\ref{alg:sox}, the update rule of $u_i$ is 
\begin{align*}
	u^{t+1}_i = \begin{cases} (1-\gamma) u^t_i + \gamma g(\w_{t+1};\z_i,\B_{i,2}^{t+1})  & \z_i\in\B_1^{t+1}\\ u^t_i & \z_i\notin \B_1^{t+1}.\end{cases}
\end{align*}
We can re-write it into the equivalent expression below.
\begin{align*}
	u^{t+1}_i = \begin{cases}  u^t_i - \gamma\left(u^t_i - g(\w^{t+1};\z_i,\B_{i,2}^{t+1})\right)& \z_i\in\B_1^{t+1}\\ u^t_i & \z_i\notin \B_1^{t+1}.\end{cases}
\end{align*}
Let us define $\phi_t(\u) =\frac{1}{2}\Norm{\u - \bg(\w^t)}^2= \frac{1}{2}\sum_{\z_i\in\D}\Norm{u_i - g(\w^t;\z_i,\S_i)}^2$, which is a 1-strongly convex function. Then, the update rule \eqref{eq:bcd} can be viewed as one step of the stochastic block coordinate descent algorithm (Algorithm 2 in \citealt{dang2015stochastic}) for minimizing $\phi_{t+1}(\u)$, where the Bregman divergence is associated with the quadratic function. We follow the analysis of \citet{dang2015stochastic}.
\begin{align*}
	\phi_{t+1}(\u^{t+1}) & = \frac{1}{2}\Norm{\u^{t+1} - \bg(\w^{t+1})}^2 = \frac{1}{2}\Norm{\u^t - \bg(\w^{t+1})}^2 + \inner{\u^t - \bg(\w^{t+1})}{\u^{t+1} - \u^t} + \frac{1}{2}\Norm{\u^{t+1} - \u^t}^2\\
	&= \frac{1}{2}\Norm{\u^t - \bg(\w^{t+1})}^2 +  \sum_{\z_i\in\B_1^t}\inner{u^t_i - g(\w^{t+1};\z_i,\B_{i,2}^{t+1})}{u^{t+1}_i - u^t_i} + \frac{1}{2}\sum_{\z_i\in\B_1^{t+1}}\Norm{u^{t+1}_i -u^t_i}^2\\
	&\quad\quad\quad + \sum_{\z_i\in\B_1^t} \inner{g(\w_{t+1};\z_i,\B_{i,2}^{t+1}) - g(\w_{t+1};\z_i,\S_i)}{u^{t+1}_i - u^t_i}.
\end{align*}
Note that $u^t_i - g(\w^{t+1};\z_i,\B^{t+1}_{2,i}) = (u^t_i - u^{t+1}_i)/\gamma$ and $2\inner{b-a}{a-c}\leq \Norm{b-c}^2 - \Norm{a-b}^2 - \Norm{a-c}^2$.
\begin{align*}
	& \sum_{\z_i\in\B_1^{t+1}}\inner{u^t_i - g(\w^{t+1};\z_i,\B^{t+1}_{2,i})}{u^{t+1}_i - u^t_i} \\
	& = \sum_{\z_i\in\B_1^{t+1}}\inner{u^t_i - g(\w^{t+1};\z_i,\B^{t+1}_{2,i})}{g(\w^{t+1};\z_i,\S_i) - u^t_i} + 	\sum_{\z_i\in\B_1^{t+1}} \inner{u^t_i - g(\w^{t+1};\z_i,\B^{t+1}_{2,i})}{u^{t+1}_i - g(\w^{t+1};\z_i,\S_i)}\\
	& = \sum_{\z_i\in\B_1^{t+1}}\inner{u^t_i - g(\w^{t+1};\z_i,\B^{t+1}_i)}{g(\w^{t+1};\z_i,\S_i) - u^t_i} + 	\frac{1}{\gamma}\sum_{\z_i\in\B_1^{t+1}} \inner{u^t_i - u^{t+1}_i}{u^{t+1}_i - g(\w^{t+1};\z_i,\S_i)}\\
	& \leq \sum_{\z_i\in\B_1^{t+1}}\inner{u^t_i - g(\w^{t+1};\z_i,\B^{t+1}_{2,i})}{g(\w^{t+1};\z_i,\S_i) - u^t_i} \\
	& \quad\quad\quad + \frac{1}{2\gamma} \sum_{\z_i\in\B_1^{t+1}}\left(\Norm{u^t_i - g(\w_{t+1};\z_i,\S_i)}^2 - \Norm{u^{t+1}_i - u^t_i}^2 - \Norm{u^{t+1}_i - g(\w^{t+1};\z_i,\S_i)}^2\right).
\end{align*}
If $\gamma <\frac{1}{5}$, we have
\begin{align*}
	& -\frac{1}{2}\left(\frac{1}{\gamma} -1 -  \frac{\gamma+1}{4\gamma}\right)\sum_{\z_i\in\B_1^{t+1}}\Norm{u^{t+1}_i - u^t_i}^2 +   \sum_{\z_i\in\B_1^{t+1}} \inner{g(\w^{t+1};\z_i,\B^{t+1}_{2,i}) - g(\w^{t+1};\z_i,\S_i)}{u^{t+1}_i - u^t_i} \\
	& \leq - \frac{1}{4\gamma} \sum_{\z_i\in\B_1^{t+1}}\Norm{u_i^{t+1} - u_i^t}^2 + \gamma\sum_{\z_i\in\B_1^{t+1}} \Norm{g(\w^{t+1};\z_i,\B^{t+1}_{2,i}) - g(\w^{t+1};\z_i,\S_i)}^2 + \frac{1}{4\gamma} \sum_{\z_i\in\B_1^{t+1}}\Norm{u_i^{t+1} - u_i^t}^2\\
	& = \gamma \sum_{\z_i\in\B_1^{t+1}} \Norm{g(\w^{t+1};\z_i,\B^{t+1}_{2,i}) - g(\w^{t+1};\z_i,\S_i)}^2.
\end{align*}
Then, we have
\begin{align*}
&  \frac{1}{2}\Norm{\u^{t+1} - \bg(\w^{t+1})}^2 \leq \frac{1}{2}\Norm{\u^t -  \bg(\w^{t+1})}^2 + \frac{1}{2\gamma}\sum_{\z_i\in\B_1^{t+1}} \Norm{u^t_i - g(\w^{t+1};\z_i,\S_i)}^2  \\
	& \quad\quad\quad - \frac{1}{2\gamma} \sum_{\z_i\in\B_1^{t+1}} \Norm{u^{t+1}_i - g(\w^{t+1};\z_i,\S_i)}^2  + \gamma \sum_{\z_i\in\B^{t+1}_1}\Norm{g(\w^{t+1};\z_i, \B^{t+1}_{2,i}) - g(\w^{t+1};\z_i,\S_i)}^2 \\
	&  \quad\quad\quad - \frac{(\gamma+1)}{8\gamma}\sum_{\z_i\in\B_1^{t+1}}\Norm{u_i^{t+1} - u_i^t}^2 +  \sum_{\z_i\in\B_1^{t+1}}\inner{u^t_i - g(\w^{t+1};\z_i,\B^{t+1}_{2,i})}{g(\w^{t+1};\z_i,\S_i) - u^t_i}.
\end{align*}
Note that $\frac{1}{2\gamma}\sum_{i\notin\B_1^t} \Norm{u^t_i - g(\w^{t+1};\z_i,\S_i)}^2 = \frac{1}{2\gamma} \sum_{i\notin\B_1^t} \Norm{u^{t+1}_i - g(\w^{t+1};\z_i,\S_i)}^2 $ based on Algorithm~\ref{alg:sox}, which implies that
\begin{align*}
	\frac{1}{2\gamma}\sum_{\z_i\in\B_1^t} \left(\Norm{u^t_i - g(\w^{t+1};\z_i,\S_i)}^2 - \Norm{u^{t+1}_i - g(\w^{t+1};\z_i,\S_i)}^2\right) = \frac{1}{2\gamma_t}\left(\Norm{\u^t - \bg(\w^t)}^2 - \Norm{\u^{t+1} - \bg(\w^t)}^2\right).
\end{align*}
Besides, we also have $\E\left[\sum_{\z_i\in\B_1^{t+1}}\Norm{g(\w^{t+1};\z_i,\B^{t+1}_{2,i}) - g(\w^{t+1};\z_i,\S_i)}^2\right] \leq \frac{B_1 \sigma^2}{B_2}$ and 
\begin{align*}
& \E_t\left[\sum_{\z_i\in\B_1^{t+1}}\inner{u^t_i - g(\w^{t+1};\z_i,\B^{t+1}_{2,i})}{g(\w^{t+1};\z_i,\S_i) - u^t_i}\right]
\\
& = \frac{B_1}{n}\sum_{\z_i\in\D}\inner{u^t_i - g(\w^{t+1};\z_i,\S_i)}{g(\w^{t+1};\z_i,\S_i)-u^t_i} = -\frac{B_1}{n} \Norm{u^t_i - g(\w^{t+1};\z_i,\S_i)}^2.
\end{align*}
Then, we can obtain 
\begin{align*}
& \frac{\gamma + 1}{2}\E\left[\Norm{\u^{t+1} - \bg(\w^{t+1})}^2\right] \\
& \leq \frac{\gamma\left(1-\frac{B_1}{n}\right) + 1}{2}\E\left[\Norm{\u^t - \bg(\w^{t+1})}^2\right] + \frac{\gamma^2 B_1 \sigma^2}{B_2} - \frac{(\gamma+1)}{8}\sum_{\z_i\in\B_1^t}\Norm{u_i^{t+1} - u_i^t}^2.	
\end{align*}
Further divide $\frac{\gamma + 1}{2}$ and take full expectation on both sides
\begin{align*}
	\E\left[\Norm{\u^{t+1} - \bg(\w^{t+1})}^2\right] & \leq \frac{\gamma\left(1-\frac{B_1}{n}\right) + 1}{\gamma + 1} \E\left[\Norm{\u^t - \bg(\w^{t+1})}^2\right] + \frac{2}{1+\gamma}\frac{\gamma^2 \sigma^2B_1}{B_2} \\
	&\quad\quad - \frac{1}{4}\E\left[\sum_{\z_i\in\B_1^t}\Norm{u_i^{t+1} - u_i^t}^2\right].
\end{align*}
Note that $\frac{\gamma\left(1-\frac{B_1}{n}\right) + 1}{\gamma + 1} = 1- \frac{\gamma B_1}{\left(\gamma + 1\right)n}\leq 1-\frac{\gamma B_1}{2n}$ and $\frac{1}{1+\gamma}\leq 1$ for $\gamma \in (0,1]$. Besides, we have $\Norm{\u^t - \bg(\w^{t+1})}^2 \leq (1+\frac{\gamma B_1}{4n})\Norm{\u^t - \bg(\w^t)}^2 + (1+\frac{4n}{\gamma B_1})\Norm{\bg(\w^{t+1}) - \bg(\w^t)}^2$ due to Young's inequality and $\Norm{\bg(\w^t) - \bg(\w^{t-1})}^2 \leq n C_g^2 \Norm{\w^{t+1} - \w^t}^2 = n \eta^2 C_g^2 \Norm{\v^t}^2$.
\begin{align*}
	& \E\left[\Xi_{t+1}\right] = \E\left[\frac{1}{n}\Norm{\u^{t+1} - \bg(\w^{t+1})}^2\right] \\
	&\leq \left(1-\frac{\gamma B_1}{2n}\right)\E\left[\frac{1}{n}\Norm{\u^t - \bg(\w^{t+1})}^2\right] + \frac{2\gamma^2 \sigma^2 B_1}{n B_2} - \frac{1}{4n}\E\left[\sum_{\z_i\in\B_1^t}\Norm{u_i^{t+1} - u_i^t}^2\right]\\
	& \leq \left(1-\frac{\gamma B_1}{4n}\right)\E\left[\frac{1}{n}\Norm{\u^t - \bg(\w^t)}^2\right] + \frac{5nC_g^2\E\left[\Norm{\w^{t+1}- \w^t}^2\right]}{\gamma B_1} + \frac{2\gamma^2\sigma^2 B_1}{nB_2} - \frac{1}{4n}\E\left[\sum_{\z_i\in\B_1^t}\Norm{u_i^{t+1} - u_i^t}^2\right]\\
	& \leq \left(1-\frac{\gamma B_1}{4n}\right)\E\left[\Xi_t\right] + \frac{5n\eta^2C_g^2\E\left[\Norm{\v^t}^2\right]}{\gamma B_1} + \frac{2\gamma^2\sigma^2B_1}{nB_2} -\frac{1}{4n}\E\left[\sum_{\z_i\in\B_1^t}\Norm{u_i^{t+1} - u_i^t}^2\right].
\end{align*}

\subsection{Proof of Theorem~\ref{thm:sox}}

Based on Lemma~\ref{lem:nonconvex_starter}, Lemma~\ref{lem:grad_recursion}, and Lemma~\ref{lem:fval_recursion}, we have
\begin{align}\label{eq:sox_first_eq}
	& \E\left[F(\w^{t+1}) - F^*\right] \leq \E\left[F(\w^t)- F^*\right] + \frac{\eta}{2}\E\left[\Delta_t\right] - \frac{\eta}{2}\E\left[\Norm{\nabla F(\w^t)}^2\right] - \frac{\eta}{4}\E\left[\Norm{\v^t}^2\right],\\\label{eq:sox_second_eq}
	&\E\left[\Delta_{t+1}\right] \leq (1-\beta) \E\left[\Delta_t\right] + \frac{2L_F^2\eta^2}{\beta}\E\left[\Norm{\v^t}^2\right] + 5\beta L_f^2C_1^2\E\left[\Xi_{t+1}\right] + \frac{2\beta^2C_f^2(\zeta^2 + C_g^2)}{\min\{B_1,B_2\}} \\\nonumber
	&\quad\quad\quad\quad\quad\quad + \frac{3L_f^2 C_1^2}{n}\E\left[\sum_{\z_i\in\B_1^t}\Norm{u_i^{t+1} - u_i^t}^2\right],\\\label{eq:sox_third_eq}
	&	\E\left[\Xi_{t+1}\right] \leq \left(1-\frac{\gamma B_1}{4n}\right)\E\left[\Xi_t\right] + \frac{5n\eta^2 C_g^2\E\left[\Norm{\v^t}^2\right]}{\gamma B_1} + \frac{2\gamma^2\sigma^2B_1}{nB_2} - \frac{1}{4n}\E\left[\sum_{\z_i\in\B_1^t}\Norm{u_i^{t+1} - u_i^t}^2\right].
\end{align} 
Summing \eqref{eq:sox_first_eq}, $\frac{\eta}{\beta}\times$\eqref{eq:sox_second_eq}, and $\frac{20L_f^2C_1^2n\eta}{\gamma B_1}\times$\eqref{eq:sox_third_eq} leads to
\begin{align*}
	& \E\left[F(\w^{t+1})- F^* + \frac{\eta}{\beta}\Delta_{t+1} + \frac{20L_f^2C_1^2n\eta}{\gamma B_1}\left(1-\frac{\gamma B_1}{4n}\right)\Xi_{t+1}\right]\\
	& \leq  \E\left[F(\w^t)- F^* + \frac{\eta}{\beta}\left(1-\frac{\beta }{2}\right)\Delta_t + \frac{20L_f^2C_1^2n\eta}{\gamma B_1}\left(1-\frac{\gamma B_1}{4n}\right)\Xi_t\right] \\
	&  \quad\quad\quad - L_f^2C_1^2\eta\left(\frac{5n}{\gamma B_1}-\frac{3}{\beta}\right)\E\left[\frac{1}{n}\sum_{\z_i\in\B_1^t}\Norm{u_i^{t+1} - u_i^t}^2\right] -\frac{\eta}{2}\E\left[\Norm{\nabla F(\w^t)}^2\right]\\
	& \quad\quad\quad  - \eta\left(\frac{1}{4} - \frac{2L_F^2\eta^2}{\beta^2} - \frac{100L_f^2 n^2C_1^2\eta^2 C_g^2}{\gamma^2 B_1^2}\right) \E\left[\Norm{\v^t}^2\right] + \frac{2\beta\eta C_f^2(\zeta^2 + C_g^2)}{\min\{B_2,B_2\}} + \frac{40\eta\gamma L_f^2 C_1^2\sigma^2}{B_2}.
\end{align*}
If $\gamma \leq \frac{5n}{3B_1}\beta$, we have $\frac{5n}{\gamma B_1}-\frac{3}{\beta}\geq 0$. Set $\beta=\min\{\frac{\min\{B_1,B_2\}\epsilon^2}{12C_f^2(\zeta^2+C_g^2)}, \frac{2}{7}\}$, $\gamma = \min\left\{\frac{B_2\epsilon^2}{240L_f^2C_1^2\sigma^2}, \frac{1}{5}, \frac{5n}{3B_1}\beta\right\}$, and $\eta = \min\left\{\frac{\beta}{4 L_F}, \frac{\gamma B_1}{30L_f n C_1 C_g}\right\}$. Define the Lyapunov function as $\Phi_t\coloneqq F(\w^t) - F^* + \frac{\eta}{\beta}\Delta_t + \frac{20L_f^2 C_1^2}{B_1}\frac{\eta}{\gamma}\left(1-\frac{\gamma B_1}{4n}\right)\Xi_t$. If we initialize $\u^1$ and $\v^1$ as $\u^1_i = g_i(\w^1;\B_{i,2}^1)$ for $\z_i\in\B_1^t$ and $\v^1 = 0$, we have $\E\left[\Delta_1\right]\leq C_f^2C_g^2$ and $\E\left[\Xi_1\right]\leq \frac{\sigma^2}{B_2}$. Then,
\begin{align}\label{eq:sox_grad_norm_bound}
	\frac{1}{T}\sum_{t=1}^T \E\left[\Norm{\nabla F(\w^t)}^2\right] \leq \frac{2\Lambda_\Phi^1}{\eta T} + \frac{4\beta C_f^2(\zeta^2 + C_g^2)}{\min\{B_1,B_2\}} + \frac{80 \gamma L_f^2 C_1^2\sigma^2}{B_2},
\end{align}
where we define $ \Lambda_\Phi^1 \coloneqq  \Delta_F + \frac{1}{4L_F}C_f^2 C_g^2 + \frac{2 L_f C_1\sigma^2}{3 C_g B_2}\geq \E\left[\Phi_1\right]$.  After
\begin{align*}
 \resizebox{\hsize}{!}{$
	T = \frac{6\Lambda_\Phi^1}{\epsilon^2}\max\left\{\frac{48C_f^2(\zeta^2+C_g^2)L_F}{\min\{B_1,B_2\}\epsilon^2}, 14L_F, \frac{7200n L_f^3C_1^3C_g\sigma^2}{B_1B_2\epsilon^2}, \frac{150 L_f n C_1 C_g}{B_1}, 63L_fC_1 C_g, \frac{216L_f C_1 C_g C_f^2(\zeta^2+C_g^2)}{\min\{B_1,B_2\}\epsilon^2}\right\}$}
\end{align*}
iterations, we have $\frac{1}{T}\sum_{t=1}^T \E\left[\Norm{\nabla F(\w^t)}^2\right] \leq \epsilon^2$.

\section{Proof of Theorem~\ref{thm:sox-boost-simp}}

\begin{lemma}\label{lem:per_stage_func}
	Under assumptions~\ref{asm:lip},~\ref{asm:var} and the $\mu$-PL of $F$, the $k$-th epoch of applying Algorithm~\ref{alg:sox-boost} leads to
	\begin{align}\label{eq:single_epoch_dnasa}
		\E\left[\Gamma_{k+1}\right] & \leq \frac{\E\left[\Gamma_k\right]}{\mu\eta_k T_k} + \frac{\E\left[\Delta_k\right]}{\mu \beta_k T_k} + \frac{20 n L_f^2 C_1^2\E\left[\Xi_k\right]}{B_1 \mu \gamma_k T_k} + \frac{\beta_k C_2^2}{\mu \min\left\{B_1, B_2\right\}} + \frac{\gamma_k C_3^2}{\mu B_2}, 
	\end{align}
	where $\Gamma_k\coloneqq F(\w^k) - F(\w^*)$, $C_1^2 \coloneqq C_g^2 +\zeta^2/B_2$, $C_2^2\coloneqq 2C_f^2(\zeta^2 + C_g^2)$, $C_3^2\coloneqq 40 L_f^2 C_1^2\sigma^2$.
\end{lemma}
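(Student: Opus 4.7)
The plan is to reuse, within a single epoch $k$, the three coupled recursions already established for Theorem~\ref{thm:sox}: the descent inequality \eqref{eq:sox_first_eq}, the gradient-variance recursion \eqref{eq:sox_second_eq}, and the function-value-variance recursion \eqref{eq:sox_third_eq}. Since the stepsizes $(\eta_k,\beta_k,\gamma_k)$ are held constant throughout the $k$-th epoch, these bounds apply verbatim with subscript $k$.

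First I would assemble the Lyapunov function used in the proof of Theorem~\ref{thm:sox},
$$\Phi_t \;=\; \Gamma_t \;+\; \frac{\eta_k}{\beta_k}\,\Delta_t \;+\; \frac{20\,L_f^2 C_1^2\,n\,\eta_k}{\gamma_k B_1}\,\Xi_t ,$$
and take the same weighted sum of the three recursions. Under the usual conditions $\beta_k\leq 2/7$, $\gamma_k\leq 1/5$ and $\eta_k \leq \min\{\beta_k/(4L_F),\,\gamma_k B_1/(30 L_f n C_1 C_g)\}$, the coefficient of $\E[\|\v^t\|^2]$ becomes non-positive, and the positive $\sum_{\z_i\in\B_1^t}\|u_i^{t+1}-u_i^t\|^2$ term produced by \eqref{eq:sox_second_eq} is killed by the highlighted negative copy in \eqref{eq:sox_third_eq}. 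What remains is a clean one-step inequality of the form
$$\E[\Phi_{t+1}] \;\leq\; \E[\Phi_t] \;-\; \tfrac{\eta_k}{2}\,\E\!\left[\|\nabla F(\w^t)\|^2\right] \;+\; \tfrac{\eta_k\beta_k C_2^2}{\min\{B_1,B_2\}} \;+\; \tfrac{\eta_k\gamma_k C_3^2}{B_2},$$
exactly as in the derivation of \eqref{eq:sox_grad_norm_bound}.

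Next I would telescope $t=1,\dots,T_k$ and apply the $\mu$-PL inequality $\|\nabla F(\w^t)\|^2 \geq \mu\,\Gamma_t$, yielding
$$\tfrac{\mu}{T_k}\sum_{t=1}^{T_k}\E[\Gamma_t] \;\leq\; \tfrac{2\,\E[\Phi_1]}{\eta_k T_k} \;+\; \tfrac{2\beta_k C_2^2}{\min\{B_1,B_2\}} \;+\; \tfrac{2\gamma_k C_3^2}{B_2}.$$
Interpreting $\w^{k+1}$ as a uniformly random pick from $\{\w^1,\dots,\w^{T_k}\}$ within the epoch turns the left-hand side into $\mu\,\E[\Gamma_{k+1}]$. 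Finally, substituting the initial-stage Lyapunov value $\E[\Phi_1]=\E[\Gamma_k]+(\eta_k/\beta_k)\E[\Delta_k]+(20 L_f^2 C_1^2 n\eta_k/(\gamma_k B_1))\E[\Xi_k]$, dividing through by $\mu$, and absorbing the uniform factor of $2$ into the constants $C_2^2 = 2C_f^2(\zeta^2+C_g^2)$ and $C_3^2 = 40\,L_f^2 C_1^2\sigma^2$ produces the claimed bound \eqref{eq:single_epoch_dnasa}.

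The main obstacle is purely bookkeeping: I need to (i) check that the exact negative term $-\tfrac{1}{4n}\E[\sum_{\z_i\in\B_1^t}\|u_i^{t+1}-u_i^t\|^2]$ from \eqref{eq:sox_third_eq}, after being multiplied by the Lyapunov weight $20 L_f^2 C_1^2 n\eta_k/(\gamma_k B_1)$, dominates the positive $\tfrac{3 L_f^2 C_1^2}{n}\E[\sum_{\z_i\in\B_1^t}\|u_i^{t+1}-u_i^t\|^2]$ term from \eqref{eq:sox_second_eq} (which amounts to the single condition $\gamma_k \leq \tfrac{5n}{3B_1}\beta_k$ already appearing in Theorem~\ref{thm:sox}); and (ii) verify that the stagewise choices of $(\eta_k,\beta_k,\gamma_k)$ prescribed by Theorem~\ref{thm:sox-boost-simp} remain in the admissible regime above, so that \eqref{eq:single_epoch_dnasa} is valid at every stage. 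Everything else is a direct specialization of the Lyapunov telescope that already powers Theorem~\ref{thm:sox}, followed by a single invocation of the PL condition.
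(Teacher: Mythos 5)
Your proposal is correct and follows essentially the same route as the paper: the paper's proof of Lemma~\ref{lem:per_stage_func} simply applies the PL condition together with the average-gradient-norm bound \eqref{eq:sox_grad_norm_bound} already established in the proof of Theorem~\ref{thm:sox} (with $\Lambda_\Phi^1$ replaced by the epoch-initial Lyapunov value $\E[\Gamma_k]+\tfrac{\eta_k}{\beta_k}\E[\Delta_k]+\tfrac{20nL_f^2C_1^2\eta_k}{\gamma_k B_1}\E[\Xi_k]$), which is exactly the telescoped Lyapunov inequality you re-derive from \eqref{eq:sox_first_eq}--\eqref{eq:sox_third_eq}. The only cosmetic differences are that you apply PL per iterate before averaging while the paper applies it at the epoch output $\w^{k+1}$ (both implicitly treating $\w^{k+1}$ as a randomly selected iterate), and your "absorb the factor of 2" remark mirrors the paper's own factor-of-2 slack between its main-text PL definition and the $\tfrac{1}{2\mu}$ form used in the proof; your admissibility checks (i)--(ii) are precisely the step-size conditions the paper invokes implicitly.
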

\begin{proof}
	Since $F$ is $\mu$-PL, we have $F(\w) - F(\w^*) \leq \frac{1}{2\mu} \Norm{\nabla F(\w)}^2$. We define $\Gamma_k\coloneqq F(\w^k) - F(\w^*)$, $\Delta_k\coloneqq \Norm{\v^k - \nabla F(\w^k)}^2$ and $\Xi_k\coloneqq \frac{1}{n}\Norm{\u^k - \bg(\w^k)}^2$. Applying PL condition and Theorem~\ref{thm:sox} to one epoch of SOX-boost leads to $\E\left[\Gamma_{k+1}\right]  \leq \frac{1}{2\mu}\E\left[\Norm{\nabla F(\w^{k+1})}^2\right]$ and 
	\begin{align*}
		\resizebox{\hsize}{!}{$ \frac{1}{2\mu}\E\left[\Norm{\nabla F(\w^{k+1})}^2\right]\leq \frac{1}{2\mu T_k}\sum_{t=1}^{T_k}\E\left[\Norm{\nabla F(\w^t)}^2\right] \leq \frac{\E\left[\Gamma_k\right]}{\mu\eta_k T_k} + \frac{\E\left[\Delta_k\right]}{\mu \beta_k T_k} + \frac{20 n L_f^2 C_1^2\E\left[\Xi_k\right]}{B_1 \mu \gamma_k T_k} + \frac{\beta_k C_2^2}{\mu \min\left\{B_1, B_2\right\}} + \frac{\gamma_k C_3^2}{\mu B_2},$}
	\end{align*}
	where we define $C_2^2\coloneqq 2C_f^2(\zeta^2 + C_g^2)$, $C_3^2\coloneqq 40 L_f^2 C_1^2\sigma^2$. 
\end{proof}

\begin{lemma}\label{lem:epoch_var_decay}
	Under assumptions~\ref{asm:lip},~\ref{asm:var}, the $k$-th epoch of Algorithm~\ref{alg:sox-boost} leads to
	\begin{align*}
		& \E\left[\Delta_{k+1} + C_5\Xi_{k+1}\right]\leq  \frac{\E\left[6\Gamma_k + 10C_4\Delta_k + 7C_4C_5 \Xi_k\right]}{\eta_k T_k}+ \frac{10\eta_kC_2^2}{C_4\min\{B_1,B_2\}} + \frac{80nC_3^2\eta_k }{3B_1 B_2 C_4}, 
	\end{align*}
	where $\beta_k\leq \min\left\{\frac{3B_1}{50n}, \frac{2}{7}\right\}$, $\gamma_k = \frac{10n}{3B_1}\beta_k$, $\eta_k = \beta_kC_4$, $C_4\coloneqq \min\left\{1/4 L_F, 1/9 L_fC_1 C_g\right\}$, $C_5\coloneqq 12 L_f^2C_1^2$.
\end{lemma}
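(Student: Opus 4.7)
\textbf{Proof Plan for Lemma~\ref{lem:epoch_var_decay}.}

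The plan is to interpret the stage output $(\w^{k+1},\u^{k+1},\v^{k+1})$ as a \emph{uniformly random iterate} drawn from the $T_k$ SOX steps executed in stage $k$ (a standard device for nonconvex analyses). Under this convention, $\E[\Delta_{k+1}+C_5\Xi_{k+1}]$ equals the iteration-averaged quantity $\frac{1}{T_k}\sum_{t=1}^{T_k}\E[\Delta_t+C_5\Xi_t]$, where $\Gamma_t,\Delta_t,\Xi_t$ denote the internal SOX states inside stage $k$ initialized at $\Gamma_k,\Delta_k,\Xi_k$. The task is therefore to telescope the three per-step recursions \eqref{eq:sox_first_eq}, \eqref{eq:sox_second_eq}, \eqref{eq:sox_third_eq} and extract a bound on this average.

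First, I would form the linear combination $\frac{1}{\beta_k}\cdot$\eqref{eq:sox_second_eq}$+\frac{4nC_5}{\gamma_k B_1}\cdot$\eqref{eq:sox_third_eq} and sum over $t=1,\dots,T_k$. The $\Delta$- and $\Xi$-telescopes produce $\sum_t\E[\Delta_t+C_5\Xi_t]$ on the left plus boundary terms $\frac{1}{\beta_k}\E[\Delta_k]$ and $\frac{4nC_5}{\gamma_k B_1}\E[\Xi_k]$ on the right. The choice $C_5=12L_f^2C_1^2$ together with $\gamma_k=\frac{10n}{3B_1}\beta_k$ is precisely what makes the coefficient $\frac{3L_f^2C_1^2}{n\beta_k}-\frac{C_5}{\gamma_k B_1}$ in front of $\sum_t\E[\sum_{\z_i\in\B_1^t}\|u_i^{t+1}-u_i^t\|^2]$ non-positive, so that cross term is discarded. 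The residual $5L_f^2C_1^2\sum_t\E[\Xi_{t+1}]$ inherited from \eqref{eq:sox_second_eq} can be re-indexed into $5L_f^2C_1^2\sum_t\E[\Xi_t]$ plus a harmless $\E[\Xi_{T_k+1}]$ boundary contribution, and then absorbed into $C_5\sum_t\E[\Xi_t]$ on the left because $5L_f^2C_1^2<C_5$.

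Next, I would handle the $\sum_t\E[\|\v^t\|^2]$ term that appears on the right by invoking Lemma~\ref{lem:nonconvex_starter}, rearranged as $\E[\|\v^t\|^2]\le\tfrac{4}{\eta_k}\E[\Gamma_t-\Gamma_{t+1}]+2\E[\Delta_t]$. Summing and telescoping yields $\frac{4\E[\Gamma_k]}{\eta_k}+2\sum_t\E[\Delta_t]$. The scalar in front is $A:=\tfrac{2L_F^2\eta_k^2}{\beta_k^2}+\tfrac{20n^2 C_5\eta_k^2 C_g^2}{\gamma_k^2 B_1^2}=C_4^2\bigl(2L_F^2+\tfrac{108L_f^2C_1^2C_g^2}{5}\bigr)$, which the stipulated $C_4\le\min\{1/(4L_F),1/(9L_fC_1C_g)\}$ bounds by a small absolute constant, so $2A\sum_t\E[\Delta_t]$ can be moved back to the LHS without swamping the leading $\sum_t\E[\Delta_t]$ coefficient. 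Using the identities $\frac{1}{\beta_k}=\frac{C_4}{\eta_k}$ and $\frac{4nC_5}{\gamma_k B_1}=\frac{6C_5C_4}{5\eta_k}$, along with $\beta_k=\eta_k/C_4$ in the noise terms (which, together with $C_3^2=40L_f^2C_1^2\sigma^2$ and $C_2^2=2C_f^2(\zeta^2+C_g^2)$, deliver $\frac{10\eta_kC_2^2}{C_4\min\{B_1,B_2\}}$ and $\frac{80nC_3^2\eta_k}{3B_1B_2C_4}$), and dividing by $T_k$ produces the stated bound.

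The main obstacle is the \emph{constant matching} in the final step. The residual $\frac{7}{12}C_5$ coefficient that survives on the LHS after absorbing $5L_f^2C_1^2\sum_t\E[\Xi_{t+1}]$, and the $(1-2A)$ coefficient that survives in front of $\sum_t\E[\Delta_t]$, force one to inflate the RHS by a fixed multiplicative factor; this inflation is precisely what produces the integers $6,10,7$ multiplying $\Gamma_k,\Delta_k,\Xi_k$ and the $\frac{10}{1}$ and $\frac{80}{3}$ appearing in the two noise terms. Carefully bookkeeping these ratios, and verifying that the parameter constraint $\beta_k\le\min\{3B_1/(50n),2/7\}$ (which ensures Lemmas~\ref{lem:grad_recursion} and~\ref{lem:fval_recursion} apply and that the $\|\v^t\|^2$ coefficient $A$ remains sufficiently small), is the tedious but nonessential part of the proof.
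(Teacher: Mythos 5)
Your overall architecture is sound and matches the paper's skeleton: you interpret the stage output as a uniformly averaged/random iterate, you combine the recursions \eqref{eq:sox_second_eq} and \eqref{eq:sox_third_eq} with a weight chosen exactly so that the $\sum_{\z_i\in\B_1^t}\Norm{u_i^{t+1}-u_i^t}^2$ terms cancel (indeed $\frac{C_5}{\gamma_k B_1}=\frac{3.6L_f^2C_1^2}{n\beta_k}\geq\frac{3L_f^2C_1^2}{n\beta_k}$) and the residual $5L_f^2C_1^2\,\E[\Xi_{t+1}]$ is absorbed into $C_5=12L_f^2C_1^2$, and you telescope. The one genuine difference is how $\E\Norm{\v^t}^2$ is eliminated: the paper splits $\Norm{\v^t}^2\leq 2\Delta_t+2\Norm{\nabla F(\w^t)}^2$ \emph{before} rescaling, absorbs the extra $\Delta_t$ into the $(1-\beta_k)$ contraction (yielding the clean factor $1-\beta_k/4$ and a $\tfrac{3\beta_k}{4}\Norm{\nabla F(\w^t)}^2$ term), and then invokes the already-proved bound \eqref{eq:sox_grad_norm_bound}, whose $\frac{2\Lambda_\Phi^1}{\eta T}$ term is what delivers $6\Gamma_k+6C_4\Delta_k+3C_4C_5\Xi_k$ after multiplication by $3$; you instead telescope Lemma~\ref{lem:nonconvex_starter} directly, $\E\Norm{\v^t}^2\leq\frac{4}{\eta_k}\E[\Gamma_t-\Gamma_{t+1}]+2\E[\Delta_t]$, and absorb $2A\sum_t\E[\Delta_t]$ against the unit coefficient on the left. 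Your route is more self-contained (it does not reuse the Lyapunov bound from Theorem~\ref{thm:sox}), and it is a legitimate proof of a bound of the same form.

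The gap is in the constant-matching claim, which you assert but do not verify, and which in fact fails in the worst case for your specific scaling. With your weights, $A=C_4^2\bigl(2L_F^2+\tfrac{108}{5}L_f^2C_1^2C_g^2\bigr)$ can reach $\tfrac{1}{8}+\tfrac{4}{15}\approx 0.392$ when $4L_F\approx 9L_fC_1C_g$, so $1-2A\approx 0.217$ and the inflation factor is about $4.6$, not the $\leq 4$ implicit in the paper. The resulting coefficient on $\Gamma_k$ is $\frac{4A}{1-2A}\approx 7.2>6$, and the $C_3^2$ noise term comes out as roughly $\frac{8}{1-2A}\cdot\frac{n\eta_k}{B_1B_2C_4}\approx\frac{37\,n\eta_k C_3^2}{B_1B_2C_4}>\frac{80nC_3^2\eta_k}{3B_1B_2C_4}$, while your $\Delta_k$, $\Xi_k$ and $C_2^2$ coefficients land below the stated ones. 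So as written your argument proves the lemma only with somewhat larger absolute constants (which would be harmless for Lemma~\ref{lem:epoch_sox_induction} and Theorem~\ref{thm:sox-boost} after renaming, but does not establish the stated inequality). To hit the stated constants you should do the absorption at the per-iteration level as the paper does --- keep the unscaled combination $\Delta_{t+1}+C_5\Xi_{t+1}$, where the extra $\Delta_t$ contribution is only $\tfrac{25}{36}\beta_k$ against a contraction budget of $\beta_k$ --- and then dispose of the remaining $\tfrac{3\beta_k}{4}\Norm{\nabla F(\w^t)}^2$ term via \eqref{eq:sox_grad_norm_bound}, whose $\Gamma$-coefficient of $2$ is what produces the $6$.
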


\begin{proof}
	Applying Lemma~\ref{lem:grad_recursion} to single iteration in any epoch of SOX-boost with $\beta_k\leq \frac{2}{7}$ leads to
	\begin{align*}
		\E\left[\Delta_{t+1}\right] &\leq (1-\beta_k) \E\left[\Delta_t\right] + \frac{4L_F^2\eta_k^2}{\beta} \E\left[\Delta_t\right] + \frac{4 L_F^2\eta_k^2}{\beta_k} \E\left[\Norm{\nabla F(\w^t)}^2\right]\\
		&\quad\quad + 5\beta_k L_f^2C_1^2\E\left[\Xi_{t+1}\right] + \frac{2\beta_k^2C_f^2(\zeta^2+ C_g^2)}{\min\{B_1,B_2\}} + \frac{3L_f^2 C_1^2}{n}\E\left[\sum_{\z_i\in\B_1^t}\Norm{u_i^{t+1} - u_i^t}^2\right].
	\end{align*}
	Applying Lemma~\ref{lem:fval_recursion} to one iteration in any epoch of SOX-boost with $\gamma_k\leq \frac{1}{5}$ leads to
	\begin{align*}
	\resizebox{\hsize}{!}{$
		\E\left[\Xi_{t+1}\right] \leq \left(1-\frac{\gamma_k B_1}{4n}\right)\E\left[\Xi_t\right] + \frac{10n\eta_k^2 C_g^2\E\left[\Delta_t\right]}{\gamma_k B_1} + \frac{10n\eta_k^2 C_g^2\E\left[\Norm{\nabla F(\w^t)}^2\right]}{\gamma_k B_1} + \frac{2\gamma_k^2\sigma^2 D}{n B_2} - \frac{1}{4n}\E\left[\sum_{\z_i\in\B_1^t}\Norm{u_i^{t+1} - u_i^t}^2\right].$}
	\end{align*}
	The following holds by summing up $\E\left[\Delta_{t+1}\right]$ and $\frac{40 L_f^2 C_1^2 n \beta_k}{\gamma_k B_1}\times \E\left[\Xi_{t+1}\right]$ and noticing $\left(1- \frac{\gamma_k B_1}{4n}\right)\leq \left(1- \frac{\gamma_k B_1}{8n}\right)^2$.
	\begin{align*}
	& \E\left[\Delta_{t+1} + \frac{40 L_f^2C_1^2 n\beta_k}{\gamma_k B_1}\Xi_{t+1}\right]  \\
	& \leq \E\left[\left(1-\beta_k + \frac{4 L_F^2\eta_k^2}{\beta_k} + \frac{400n^2\eta_k^2\beta_k C_g^2 L_f^2C_1^2}{\gamma_k^2B_1^2}\right)\Delta_t + \frac{40 L_f^2C_1^2 n\beta_k}{\gamma_k B_1}\left(1-\frac{\gamma_k B_1}{8n}\right)\Xi_t\right]\\
		& \quad\quad + \left(\frac{4 L_F^2\eta_k^2}{\beta_k} + \frac{400n^2\eta_k^2\beta_k C_g^2 L_f^2C_1^2}{\gamma_k^2 B_1^2}\right) \E\left[\Norm{\nabla F(\w^t)}^2\right] + \frac{2\beta_k^2 C_f^2(\zeta^2+C_g^2)}{\min\{B_1,B_2\}} \\
		&  \quad\quad + \frac{80 \gamma_k\beta L_f^2 C_1^2\sigma^2}{B_2} -L_f^2C_1^2\left(\frac{10n\beta_k}{\gamma_k B_1} - 3\right) \E\left[\frac{1}{n}\sum_{\z_i\in\B_1^t} \Norm{u_i^{t+1} - u_i^t}^2\right]
	\end{align*}
	If we set $\gamma_k\leq \frac{10n}{3B_1}\beta_k$, $\eta_k\leq \frac{\beta_k}{4 L_F}$ and $\eta_k\leq \frac{\gamma_k B_1}{30 L_f n C_1 C_g}$. 
	\begin{align*}
		1-\beta_k + \frac{4 L_F^2\eta_k^2}{\beta_k} + \frac{400n^2\eta_k^2\beta_k C_g^2 L_f^2C_1^2}{\gamma_k^2B_1^2} \leq 1 - \beta_k + \frac{\beta_k}{4} + \frac{4\beta_k}{9} \leq 1-\frac{\beta_k}{4}.  
	\end{align*}
	If $\beta_k\leq \min\left\{\frac{3B_1}{50n}, \frac{2}{7}\right\}$, we can set $\gamma_k = \frac{10n}{3B_1}\beta_k$ and $\eta_k = \beta_k C_4$, where $C_4\coloneqq \min\left\{1/4 L_F, 1/9 L_fC_1 C_g\right\}$. Then, $1-\frac{\gamma_k B_1}{8n}=1-\frac{5\beta_k}{12}\leq 1-\frac{\beta_k}{4}$. Besides, we define $C_5\coloneqq \frac{40 L_f^2C_1^2 n\beta_k}{\gamma_k B_1} = 12 L_f^2C_1^2$.
	\begin{align*}
		& \E\left[\Delta_{t+1} + C_5\Xi_{t+1}\right]\leq \left(1-\frac{\beta_k}{4}\right)\E\left[\Delta_t + C_5\Xi_t\right] + \frac{3\beta_k}{4} \E\left[\Norm{\nabla F(\w^t)}^2\right] +  \frac{2\beta_k^2 C_f^2(\zeta^2+C_g^2)}{\min\{B_1,B_2\}}  + \frac{2 \gamma_k\beta_k C_3^2}{B_2}.
	\end{align*}
	Telescoping the equation above from 1 to $T_k$ iterations in epoch $k$ leads to
	\begin{align*}
		\E\left[\Delta_{k+1} + C_5\Xi_{k+1}\right] & = \E\left[\frac{1}{T_k}\sum_{t=0}^{T_k}\Delta_{t+1} + C_5\frac{1}{T_k}\sum_{t=0}^{T_k}\Xi_{t+1}\right]\\
		& \leq \frac{4C_4\E\left[\Delta_k + C_5\Xi_k\right]}{\eta_k T_k} + 3\frac{1}{T_k}\sum_{t=1}^{T_k} \E\left[\Norm{\nabla F(\w^t)}^2\right] + \frac{4\beta_k C_2^2}{\min\{B_1,B_2\}} + \frac{20nC_3^2\eta_k}{3B_1 B_2C_4}.
	\end{align*}
	Applying \eqref{eq:sox_grad_norm_bound}, we can further derive that
	\begin{align*}
		& \E\left[\Delta_{k+1} + C_5\Xi_{k+1}\right]\leq  \frac{\E\left[6\Gamma_k + 10C_4\Delta_k + 7C_4C_5 \Xi_k\right]}{\eta_k T_k}+ \frac{10\eta_kC_2^2}{C_4\min\{B_1,B_2\}} + \frac{80nC_3^2\eta_k }{3B_1 B_2 C_4}.
	\end{align*} 
\end{proof}

\begin{lemma}\label{lem:epoch_sox_induction}
	If we set $ \eta_k = \min\left\{\frac{\mu C_4\min\{B_1,B_2\}\varepsilon_k}{6C_2^2}, \frac{\min\{B_1,B_2\}\varepsilon_k}{60C_2^2}, \frac{\mu C_4 B_1 B_2 \varepsilon_k}{20nC_3^2},\frac{B_1 B_2 C_4\varepsilon_k}{200nC_3^2}, \frac{3B_1 C_4}{50n}, \frac{2C_4}{7}\right\}$ and $T_k = \frac{\max\left\{\frac{12}{\mu},96C_4\right\}}{\eta_k}$, $\gamma_k = \frac{10n\eta_k}{3B_1 C_4}$, $\beta_k = \frac{\eta_k}{C_4}$, we can conclude that $\E\left[\Gamma_k\right]\leq \varepsilon_k$ and $\E\left[\Delta_k + C_5\Xi_k\right]\leq \frac{\varepsilon_k}{C_4}$, where $\varepsilon_1 = \max\left\{\Delta_F, C_4(C_f^2 C_g^2 + C_5\sigma^2/B_2)\right\}$ and $\varepsilon_k = \varepsilon_1/2^{k-1}$ for $k\geq 1$.
\end{lemma}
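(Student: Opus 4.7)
I would prove this lemma by strong induction on $k \geq 1$, using Lemma~\ref{lem:per_stage_func} to control $\E[\Gamma_{k+1}]$ and Lemma~\ref{lem:epoch_var_decay} to control $\E[\Delta_{k+1}+C_5\Xi_{k+1}]$ in the inductive step. The doubling schedule $\varepsilon_{k+1}=\varepsilon_k/2$ is the target for both inequalities.

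\textbf{Base case ($k=1$).} Using the initialization $\v^1=0$ and $u_i^1 = g(\w^1;\z_i,\B_{i,2}^1)$ as in the proof of Theorem~\ref{thm:sox}, I would bound $\Gamma_1 = F(\w^1)-F(\w^*) \leq \Delta_F$, $\E[\Delta_1]=\E\|\nabla F(\w^1)\|^2 \leq C_f^2 C_g^2$, and $\E[\Xi_1] \leq \sigma^2/B_2$ by Assumption~\ref{asm:var} and the mini-batch averaging. Substituting into $\E[\Delta_1+C_5\Xi_1]\leq C_f^2 C_g^2+C_5\sigma^2/B_2$ and comparing with the definition $\varepsilon_1 = \max\{\Delta_F, C_4(C_f^2 C_g^2+C_5\sigma^2/B_2)\}$ immediately yields both $\E[\Gamma_1] \leq \varepsilon_1$ and $\E[\Delta_1+C_5\Xi_1]\leq \varepsilon_1/C_4$.

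\textbf{Inductive step.} Assume $\E[\Gamma_k]\leq \varepsilon_k$ and $\E[\Delta_k+C_5\Xi_k]\leq \varepsilon_k/C_4$, which in particular gives $\E[\Delta_k]\leq \varepsilon_k/C_4$ and $\E[\Xi_k]\leq \varepsilon_k/(C_4 C_5)$. For the first bound, plug these into \eqref{eq:single_epoch_dnasa}. With the choices $\beta_k = \eta_k/C_4$ and $\gamma_k = 10n\eta_k/(3 B_1 C_4)$, the three ``initial-condition'' terms collapse to constant multiples of $\varepsilon_k/(\mu \eta_k T_k)$, which the choice $T_k \geq 12/(\mu\eta_k)$ drives below a small fraction of $\varepsilon_k$; the two ``noise'' terms $\beta_k C_2^2/(\mu\min\{B_1,B_2\})$ and $\gamma_k C_3^2/(\mu B_2)$ are each controlled by the corresponding two upper bounds $\eta_k \leq \mu C_4 \min\{B_1,B_2\}\varepsilon_k/(6 C_2^2)$ and $\eta_k \leq \mu C_4 B_1 B_2\varepsilon_k/(20n C_3^2)$. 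Summing the five contributions yields $\E[\Gamma_{k+1}] \leq \varepsilon_k/2 = \varepsilon_{k+1}$.

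For the second bound, I would plug the inductive hypothesis into Lemma~\ref{lem:epoch_var_decay}. The first block, of the form $\E[6\Gamma_k+10C_4\Delta_k+7C_4C_5\Xi_k]/(\eta_k T_k)$, is bounded by a constant multiple of $\varepsilon_k/(C_4\eta_k T_k)$ using $\E[\Gamma_k]\leq \varepsilon_k$ and $\E[\Delta_k+C_5\Xi_k]\leq \varepsilon_k/C_4$; the choice $T_k \geq 96 C_4/\eta_k$ then shrinks this below a small fraction of $\varepsilon_k/C_4$. The two ``noise'' terms $10\eta_k C_2^2/(C_4\min\{B_1,B_2\})$ and $80 n C_3^2\eta_k/(3 B_1 B_2 C_4)$ are handled by the dedicated step-size caps $\eta_k\leq \min\{B_1,B_2\}\varepsilon_k/(60 C_2^2)$ and $\eta_k \leq B_1 B_2 C_4\varepsilon_k/(200 n C_3^2)$ in the definition of $\eta_k$. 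Combining gives $\E[\Delta_{k+1}+C_5\Xi_{k+1}] \leq \varepsilon_k/(2C_4) = \varepsilon_{k+1}/C_4$, closing the induction. The last two caps $\eta_k \leq 3B_1 C_4/(50n)$ and $\eta_k \leq 2C_4/7$ ensure the pre-conditions $\beta_k\leq \min\{3B_1/(50n),2/7\}$ and $\gamma_k\leq 1/5$ required by Lemmas~\ref{lem:grad_recursion} and~\ref{lem:fval_recursion} on which both Lemmas~\ref{lem:per_stage_func} and~\ref{lem:epoch_var_decay} rest.

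The main obstacle is bookkeeping: verifying that each of the several terms in the two recursions is simultaneously dominated by a small enough fraction of $\varepsilon_k$ (respectively $\varepsilon_k/C_4$) so that the total is at most $\varepsilon_{k+1} = \varepsilon_k/2$, while keeping the dependencies among $\eta_k$, $\beta_k$, $\gamma_k$, $T_k$ consistent with those required by the upstream lemmas. All other work is routine algebra once the step-size identities $\beta_k = \eta_k/C_4$, $\gamma_k = 10n\eta_k/(3B_1 C_4)$ are substituted.
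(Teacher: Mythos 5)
Your proposal is correct and follows essentially the same route as the paper's proof: induction with the same initialization-based base case, then plugging the inductive hypotheses into Lemma~\ref{lem:per_stage_func} and Lemma~\ref{lem:epoch_var_decay}, using $T_k \geq \max\{12/\mu, 96C_4\}/\eta_k$ to kill the initial-condition terms and the dedicated step-size caps to control the noise terms, with the last two caps guaranteeing the preconditions on $\beta_k$ and $\gamma_k$. No substantive difference from the paper's argument.
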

\begin{proof}
	We prove this lemma by induction. First, we define $\Delta_F \coloneqq F(\w^1) - F(\w^*)$, $\Gamma_k = F(\w^k) - F(\w^*)$, and $\varepsilon_1 = \max\left\{\Delta_F, C_4(C_f^2 C_g^2 + C_5\sigma^2/B_2)\right\}$, where $C_4\coloneqq \min\left\{1/4 L_F, 1/9L_f C_1 C_g\right\}$, $C_5\coloneqq 12 L_f^2C_1^2$. If we initialize $\u^1$ and $\v^1$ as $u^1_i = g_i(\w^1;\B_{i,2}^1)$ for $i\in\D_t$ and $\v^1 = 0$, we have $\E\left[\Gamma_1\right]\leq \varepsilon_1$ and $\E\left[\Delta_1+ C_5\Xi_1\right]\leq \frac{\varepsilon_1}{C_4}$. 
	
	Next, we consider the $k\geq 2$ case. Assume $\E\left[\Gamma_k\right]\leq \varepsilon_k$ and $\E\left[\Delta_k + C_5\Xi_k\right]\leq \frac{\varepsilon_k}{C_4}$. We define $\varepsilon_k = \varepsilon_1/2^{k-1}$ for $k\geq 2$. We choose $\beta_k\leq \min\left\{\frac{3D}{50n}, \frac{2}{7}\right\}$, $\gamma_k = \frac{10n}{3D}\beta_k$, $\eta_k = \beta_kC_4$. Based on Lemma~\ref{lem:per_stage_func}, we have
	\begin{align*}
		\E\left[\Gamma_{k+1}\right] &\leq \frac{\E\left[\Gamma_k + C_4(\Delta_k + C_5 \Xi_k)\right]}{\mu \eta_k T_k} + \frac{\eta_k C_2^2}{\mu C_4 \min\{B_1,B_2\}} + \frac{10 n C_3^2\eta_k}{3\mu B_1 B_2 C_4}.
	\end{align*}
	Besides, Lemma~\ref{lem:epoch_var_decay} implies that
	\begin{align*}
		& \E\left[\Delta_{k+1} + C_5\Xi_{k+1}\right]\leq  \frac{\E\left[6\Gamma_k + 10C_4(\Delta_k + C_5\Xi_k)\right]}{\eta_k T_k}+ \frac{10\eta_k C_2^2}{C_4\min\{B_1,B_2\}} + \frac{80n\eta_kC_3^2}{3B_1 B_2 C_4}.
	\end{align*} 
	The following choices of $\eta_k$ and $T_k$ makes $\E\left[\Gamma_{k+1}\right]\leq \varepsilon_{k+1} = \frac{\varepsilon_k}{2}$ and $\E\left[\Delta_{k+1} + C_5\Xi_{k+1}\right] \leq \frac{\varepsilon_{k+1}}{C_4} = \frac{\varepsilon_k}{2C_4}$. We define $C_6\coloneqq \min\{\lambda C_4,\frac{1}{10}\}$.
	\begin{align*}
		& \eta_k =\min\left\{\frac{\mu C_4\min\{B_1,B_2\}\varepsilon_k}{6C_2^2}, \frac{\min\{B_1,B_2\}\varepsilon_k}{60C_2^2}, \frac{\mu C_4 B_1 B_2 \varepsilon_k}{20nC_3^2},\frac{B_1 B_2 C_4\varepsilon_k}{200n C_3^2}, \frac{3B_1 C_4}{50n}, \frac{2C_4}{7}\right\},\\
		& T= \frac{\max\left\{\frac{12}{\mu}, 96 C_4\right\}}{\eta_k}.\\
	\end{align*}
\end{proof}	
\begin{theorem}[Detailed Version of Theorem~\ref{thm:sox-boost-simp}]\label{thm:sox-boost}
	Under assumptions~\ref{asm:lip}, \ref{asm:var} and the $\mu$-PL of $F$, SOX-boost (Algorithm~\ref{alg:sox-boost}) can find an $\w$ satisfying that $\E\left[F(\w) - F(\w^*)\right]\leq 2\epsilon$ after $$T = C_{1/\mu} \max\left\{\frac{6C_2^2}{\mu C_4\min\{B_1,B_2\}\epsilon}, \frac{60C_2^2}{\min\{B_1,B_2\}\epsilon}, \frac{20nC_3^2}{\mu C_4 B_1 B_2\epsilon}, \frac{200nC_3^2}{B_1 B_2C_4\epsilon}, \frac{50n\log(\varepsilon_1/\epsilon)}{3B_1C_4}, \frac{7\log(\varepsilon_1/\epsilon)}{2C_4}\right\}$$ iterations, where $C_{1/\mu} = \max\left\{\frac{12}{\mu}, 96C_4\right\}$.
\end{theorem}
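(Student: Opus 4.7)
The plan is to prove this by a stagewise geometric decay induction, analogous to epoch-based SGD with restart under PL, but now tailored to the SOX estimators so that the auxiliary variance quantities $\Delta_k\coloneqq\|\v^k-\nabla F(\w^k)\|^2$ and $\Xi_k\coloneqq\frac{1}{n}\|\u^k-\bg(\w^k;\S)\|^2$ also halve from stage to stage. Concretely, I would set a target sequence $\varepsilon_k = \varepsilon_1/2^{k-1}$, initialize $\u^1$ and $\v^1$ so that $\E[\Gamma_1]\le\varepsilon_1$ and $\E[\Delta_1 + C_5\Xi_1]\le\varepsilon_1/C_4$ (for constants $C_4,C_5$ collecting smoothness and Lipschitz parameters), and show by induction that after stage $k$,
\begin{align*}
\E[\Gamma_k]\le\varepsilon_k,\qquad \E[\Delta_k + C_5\Xi_k]\le \varepsilon_k/C_4.
\end{align*}
After $K=\log(\varepsilon_1/\epsilon)$ stages this yields $\E[\Gamma_K]\le\epsilon$, and summing $T_k=O(1/(\mu\eta_k))$ with the prescribed $\eta_k\propto \varepsilon_k$ gives a geometric sum dominated by its last term $T_K=O(1/(\mu\eta_K))$, producing the stated $O(\max(n/(\mu^2 B_1B_2\epsilon),\,1/(\mu^2\min(B_1,B_2)\epsilon)))$ complexity.

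The per-stage engine has two ingredients. First, I would invoke Theorem~\ref{thm:sox} applied inside one stage to bound $\tfrac{1}{T_k}\sum_t\E[\|\nabla F(\w^t)\|^2]$ by the Lyapunov initial value $\Lambda_\Phi^k$ divided by $\eta_k T_k$ plus the bias terms $O(\beta_k/\min(B_1,B_2))$ and $O(\gamma_k/B_2)$; the PL condition $\Gamma_{k+1}\le\frac{1}{2\mu}\|\nabla F(\w^{k+1})\|^2$ then converts this into the functional recursion in Lemma~\ref{lem:per_stage_func}. Second, and this is the more delicate piece, I need a companion recursion showing that $\E[\Delta_{k+1}+C_5\Xi_{k+1}]$ also shrinks; this is exactly Lemma~\ref{lem:epoch_var_decay}, obtained by combining the one-step recursions of Lemma~\ref{lem:grad_recursion} and Lemma~\ref{lem:fval_recursion} with the scaling $\gamma_k=\tfrac{10n}{3B_1}\beta_k$ and $\eta_k=\beta_k C_4$, which ensures the contraction factor $(1-\beta_k/4)$ on both $\Delta$ and $\Xi$ simultaneously and cancels the shared negative $\sum_i\|u_i^{t+1}-u_i^t\|^2$ term.

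With both recursions in hand, the inductive step reduces to choosing $\eta_k$ small enough relative to $\varepsilon_k$ so that the bias contributions $\eta_k C_2^2/\min(B_1,B_2)$ and $n\eta_k C_3^2/(B_1B_2)$ are each at most $\varepsilon_k/4$ (for the function gap) and $\varepsilon_{k}/(4C_4)$ (for the Lyapunov gap), and then picking $T_k$ large enough so the initial-condition term $\Lambda_\Phi^k/(\eta_k T_k)\lesssim \varepsilon_k/(\mu\eta_k T_k)$ shrinks by at least a factor of $2$. The explicit choices
\begin{align*}
\eta_k = \min\!\Bigl\{\tfrac{\mu C_4 \min(B_1,B_2)\varepsilon_k}{6C_2^2},\tfrac{\mu C_4 B_1B_2\varepsilon_k}{20nC_3^2},\ldots\Bigr\},\quad T_k = \tfrac{\max(12/\mu,\,96C_4)}{\eta_k}
\end{align*}
in Lemma~\ref{lem:epoch_sox_induction} accomplish this, closing the induction and giving $\E[\Gamma_{k+1}]\le\varepsilon_{k+1}$ and $\E[\Delta_{k+1}+C_5\Xi_{k+1}]\le\varepsilon_{k+1}/C_4$.

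The main obstacle I anticipate is the coupled nature of the induction: it is not enough to show $\Gamma_k$ halves, since the per-stage bound of Theorem~\ref{thm:sox} carries an $O(\Lambda_\Phi^k/(\eta_k T_k))$ term that depends on the current $\Delta_k$ and $\Xi_k$; if those did not also halve, the required $T_k$ would blow up. The key technical trick to get around this is the joint Lyapunov descent in Lemma~\ref{lem:epoch_var_decay}, which uses the \emph{negative} $-\tfrac{1}{4n}\sum_{i\in\B_1^t}\|u_i^{t+1}-u_i^t\|^2$ term from Lemma~\ref{lem:fval_recursion} to absorb the corresponding positive term from Lemma~\ref{lem:grad_recursion}, producing matched $(1-\beta_k/4)$ contractions under the ratio constraint $\gamma_k\asymp (n/B_1)\beta_k$. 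Once both quantities decay geometrically in lockstep, the total iteration count $\sum_{k=1}^K T_k = O(\sum_k 1/(\mu\eta_k)) = O(1/(\mu\eta_K))$ follows from the geometric series, and substituting $\varepsilon_K=\epsilon$ into the expression for $\eta_K$ delivers the final $O(\max(n/(\mu^2 B_1B_2\epsilon),\,1/(\mu^2\min(B_1,B_2)\epsilon)))$ bound.
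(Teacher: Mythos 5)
Your proposal follows essentially the same route as the paper: the coupled induction on $\E[\Gamma_k]\le\varepsilon_k$ and $\E[\Delta_k+C_5\Xi_k]\le\varepsilon_k/C_4$ via Lemma~\ref{lem:per_stage_func} (PL applied to the per-stage bound from Theorem~\ref{thm:sox}) and Lemma~\ref{lem:epoch_var_decay} (joint contraction using the cancellation of the $\sum_i\|u_i^{t+1}-u_i^t\|^2$ terms under $\gamma_k=\tfrac{10n}{3B_1}\beta_k$, $\eta_k=\beta_kC_4$), then the parameter choices of Lemma~\ref{lem:epoch_sox_induction} and the geometric summation of $T_k$ over $K=\log(\varepsilon_1/\epsilon)$ stages. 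This matches the paper's proof, so no further comparison is needed.
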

\begin{proof}
	According to Lemma~\ref{lem:epoch_sox_induction}, the total number of iterations to achieve target accuracy $\E\left[\Gamma_k\right]\leq \epsilon$ can be represented as:
	\begin{align*}
		& T = \sum_{k=1}^{\log(\varepsilon_1/\epsilon)} T_k \\
		& = C_{1/\mu} \max\left\{\frac{6C_2^2}{\mu C_4\min\{B_1,B_2\}\epsilon}, \frac{60C_2^2}{\min\{B_1,B_2\}\epsilon}, \frac{20nC_3^2}{\mu C_4 B_1 B_2\epsilon}, \frac{200nC_3^2}{B_1 B_2C_4\epsilon}, \frac{50n\log(\varepsilon_1/\epsilon)}{3B_1C_4}, \frac{7\log(\varepsilon_1/\epsilon)}{2C_4}\right\},
	\end{align*}
	where $C_{1/\mu} = \max\left\{\frac{12}{\mu}, 96C_4\right\}$.  
\end{proof}

\begin{proof}[Proof of Corollary~\ref{cor:cvx}]
	Suppose that $\w^*$ is a minimum of $F$ and $\hat{\w}^*$ is the minimum of the strongly convexified $\hat{F}$. If $\E\left[\hat{F}(\w) - \hat{F}(\hat{\w}^*)\right]\leq \epsilon$, we have 
	\begin{align*}
		\E\left[F(\w)\right] \leq \E\left[\hat{F}(\w)\right] \leq \hat{F}(\hat{\w}^*)  + \epsilon \leq \hat{F}(\w^*)  + \epsilon = F(\w^*) + \frac{\lambda}{2}\Norm{\w^*}^2 + \epsilon.
	\end{align*}
	Thus, if the minimum $\w^*$ of $F$ is in a bounded domain $\Norm{\w^*}\leq C_*$ and we choose $\lambda = \frac{2\epsilon}{C_*^2}$, we also have $\E\left[F(\w) - F(\w^*)\right]\leq 2\epsilon$.
\end{proof}

\section{Proof of Theorem~\ref{thm:cvx_sox-simp}}

First, we state some technical lemmas.

\subsection{Technical Lemmas}

For any iteration $t$, we can define $\y^t = (\w^t;\pi_1^{t+1};\pi_2^{t+1})$ and $\pi_1^{t+1} = [\pi_{1,1}^{t+1}, \dotsc, \pi_{n,1}^{t+1}]^\top$, $\pi_2^{t+1} = [\pi_{1,2}^{t+1}, \dotsc, \pi_{n,2}^{t+1}]^\top$, where $\pi_{i,1}^{t+1}=\nabla f(u^{t+1}_i)$ and $\pi_{i,2}^{t+1}=\nabla g_i(\w_t)$.  
We can define the gap $Q(\y^t,\y)$ as
\begin{align*}
	Q(\y^t,\y) = \L(\w^t,\pi_1,\pi_2) - \L(\w,\pi_1^{t+1},\pi_2^{t+1}) = \frac{1}{n}\sum_{\z_i\in\D} \left(\underbrace{\L_{i,1}(\w^t,\pi_{i,1},\pi_{i,2})-\L_{i,1}(\w,\pi_{i,1}^{t+1},\pi_{i,2}^{t+1})}_{\coloneqq Q_i(\y^t,\y)}\right).
\end{align*}
The following lemmas are needed.
\begin{lemma}
	For any $\pi_{i,1}$ such that $\pi_{i,1}=\nabla f_i(u_{i})$ for some bounded $u_i$, then there exists $C_{f_*}$ such that $|f_i^*(\pi_{i,1})|\leq C_{f_*}$. 
\end{lemma}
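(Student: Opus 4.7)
The plan is to apply the Fenchel--Young equality, which holds with equality whenever the conjugate is evaluated at a subgradient of a convex differentiable function. Since $f$ is convex and differentiable (Assumption~\ref{asm:lip}) and $\pi_{i,1}=\nabla f(u_i)$, we have
\begin{align*}
f^*(\pi_{i,1}) = f^*(\nabla f(u_i)) = \langle u_i, \nabla f(u_i)\rangle - f(u_i).
\end{align*}
Hence $|f^*(\pi_{i,1})| \leq \|u_i\|\cdot\|\nabla f(u_i)\| + |f(u_i)|$, and it suffices to bound each factor on the right.

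First, by assumption $u_i$ lies in a bounded set, so $\|u_i\|\leq C_u$ for some finite $C_u$. Second, $C_f$-Lipschitz continuity of $f$ implies $\|\nabla f(u_i)\|\leq C_f$. Third, Lipschitz continuity on the bounded region containing $u_i$ yields $|f(u_i)| \leq |f(0)| + C_f\|u_i\|\leq |f(0)| + C_f C_u$ (or, more generally, $f$ is bounded on any bounded set because it is continuous). Combining the three gives
\begin{align*}
|f^*(\pi_{i,1})| \leq C_u C_f + |f(0)| + C_f C_u = 2C_u C_f + |f(0)| =: C_{f_*},
\end{align*}
which is the desired uniform bound.

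The argument is essentially one line once Fenchel--Young equality is invoked; the only subtle point is checking that equality (not merely inequality) holds, which requires $\pi_{i,1}$ to be an element of $\partial f(u_i)$, guaranteed here because $f$ is differentiable and $\pi_{i,1}=\nabla f(u_i)$ by hypothesis. The ``bounded $u_i$'' hypothesis is what ultimately provides the boundedness, and in the intended application (the primal--dual reformulation where $u_i^t$ is a moving average of $g_i(\w;\B_{i,2})$ values) this follows from Assumption~\ref{asm:bounded_dom}'s bound $\|g(\w;\z_i,\xi_i)\|\leq D_g$, since a convex combination of bounded quantities remains bounded.
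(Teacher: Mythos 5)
Your proof is correct and follows essentially the same route as the paper: both invoke the Fenchel--Young equality $f(u_i)+f^*(\pi_{i,1})=\pi_{i,1}u_i$ (valid because $\pi_{i,1}=\nabla f(u_i)$ is a subgradient of the convex, differentiable $f$) and then bound $u_i$ by hypothesis, $\pi_{i,1}$ by the $C_f$-Lipschitz continuity of $f$, and $f(u_i)$ by Lipschitz continuity on the bounded region. Your version just spells out the constants slightly more explicitly than the paper does.
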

\begin{proof}
	Due to the definition of convex conjugate, we have $f_i(u_i) + f_i^*(\pi_{i,1})=\pi_{i,1}u_i$. Due to that $\pi_{i,1}$ and $u_i$ are bounded and $f_i(u_i)$ is bounded due to its Lipchitz continity. As a result, $f_i^*(\pi_{i,1})$ is bounded by some constant $C_{f_*}$. 
\end{proof}
\begin{lemma}\label{lem:juditsky_variant}
	For any $\pi_1\in\Pi$ and the sequences $\{\tilde{\pi}_1^t\}$ and $\hat{\pi}_1^t$ defined as $\tilde{\pi}_1^t = \arg\min_{\pi_1}  \inner{\hat{g}(\w^t;\tilde{\B}_2^t) - g(\w^t)}{\pi_1} + \tau' \bD_{f^*}(\hat{\pi}_1^t, \pi_1)$, 
	$\hat{\pi}_1^{t+1}  = \arg\min_{\pi_1}  \inner{\hat{g}(\w^t;\B_2^t) - g(\w^t)}{\pi_1} + \tau' \bD_{f^*}(\hat{\pi}_1^t, \pi_1)$, we have
	\begin{align*}
		\inner{\hat{g}(\w^t;\B_2^t) - g(\w^t)}{\pi_1 - \tilde{\pi}_1^t} & \geq \tau' (\bD_{f^*}(\hat{\pi}_1^{t+1},\pi_1) - \bD_{f^*}(\hat{\pi}_1^t,\pi_1)) - \frac{L_f}{2\tau'}\Norm{\hat{g}(\w^t;\B_2^t)  - \hat{g}(\w^t;\tilde{\B}_2^t)}^2.
	\end{align*}
\end{lemma}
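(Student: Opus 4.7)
The plan is to view this as a standard \emph{Juditsky-style} inequality for proximal/mirror-descent updates with an inexact gradient, where $\hat{\pi}_1^{t+1}$ is the ``actual'' update driven by the noisy vector $\hat{g}(\w^t;\B_2^t) - g(\w^t)$ and $\tilde{\pi}_1^t$ is a ``phantom'' update driven by the independent copy $\hat{g}(\w^t;\tilde{\B}_2^t) - g(\w^t)$. I would exploit first-order optimality for each of the two argmin problems, stitch them together via the Bregman three-point identity, and then pay for the discrepancy $\hat{g}(\w^t;\B_2^t)-\hat{g}(\w^t;\tilde{\B}_2^t)$ using Young's inequality, calibrated so that the leftover quadratic is absorbed by the $\tfrac{1}{L_f}$-strong convexity of $\bD_{f^*}$ (which follows from the $L_f$-smoothness of $f$).

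Concretely, first I would write down first-order optimality for $\hat{\pi}_1^{t+1}$: for every admissible $\pi_1$,
\begin{equation*}
\inner{(\hat{g}(\w^t;\B_2^t)-g(\w^t))+\tau'(\nabla f^*(\hat{\pi}_1^{t+1})-\nabla f^*(\hat{\pi}_1^t))}{\pi_1-\hat{\pi}_1^{t+1}}\ \geq\ 0,
\end{equation*}
and apply the three-point identity $\inner{\nabla f^*(y)-\nabla f^*(x)}{z-x}=\bD_{f^*}(y,x)+\bD_{f^*}(x,z)-\bD_{f^*}(y,z)$ with $x=\hat{\pi}_1^{t+1}$, $y=\hat{\pi}_1^t$, $z=\pi_1$ to obtain
\begin{equation*}
\inner{\hat{g}(\w^t;\B_2^t)-g(\w^t)}{\pi_1-\hat{\pi}_1^{t+1}}\ \geq\ \tau'\bigl(\bD_{f^*}(\hat{\pi}_1^{t+1},\pi_1)-\bD_{f^*}(\hat{\pi}_1^t,\pi_1)+\bD_{f^*}(\hat{\pi}_1^t,\hat{\pi}_1^{t+1})\bigr).
\end{equation*}
The same identity applied to $\tilde{\pi}_1^t$ with the test point $\pi_1=\hat{\pi}_1^{t+1}$ produces
\begin{equation*}
\inner{\hat{g}(\w^t;\tilde{\B}_2^t)-g(\w^t)}{\hat{\pi}_1^{t+1}-\tilde{\pi}_1^t}\ \geq\ \tau'\bigl(\bD_{f^*}(\tilde{\pi}_1^t,\hat{\pi}_1^{t+1})-\bD_{f^*}(\hat{\pi}_1^t,\hat{\pi}_1^{t+1})+\bD_{f^*}(\hat{\pi}_1^t,\tilde{\pi}_1^t)\bigr).
\end{equation*}

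Next I would split the LHS of the lemma through $\hat{\pi}_1^{t+1}$, namely $\pi_1-\tilde{\pi}_1^t=(\pi_1-\hat{\pi}_1^{t+1})+(\hat{\pi}_1^{t+1}-\tilde{\pi}_1^t)$, and within the second piece add and subtract $\hat{g}(\w^t;\tilde{\B}_2^t)$ to isolate the noise contribution $\inner{\hat{g}(\w^t;\B_2^t)-\hat{g}(\w^t;\tilde{\B}_2^t)}{\hat{\pi}_1^{t+1}-\tilde{\pi}_1^t}$. Summing the two three-point bounds, the shared term $\tau'\bD_{f^*}(\hat{\pi}_1^t,\hat{\pi}_1^{t+1})$ cancels with opposite signs, leaving $\tau'(\bD_{f^*}(\hat{\pi}_1^{t+1},\pi_1)-\bD_{f^*}(\hat{\pi}_1^t,\pi_1))+\tau'\bD_{f^*}(\tilde{\pi}_1^t,\hat{\pi}_1^{t+1})+\tau'\bD_{f^*}(\hat{\pi}_1^t,\tilde{\pi}_1^t)$ plus the noise inner product. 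The last Bregman term is nonnegative and can be discarded.

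The last step, which is where the main obstacle lies, is to choose the Young's parameter so that the stray $\Norm{\hat{\pi}_1^{t+1}-\tilde{\pi}_1^t}^2$ is exactly paid for. Writing Young's inequality
\begin{equation*}
\inner{\hat{g}(\w^t;\B_2^t)-\hat{g}(\w^t;\tilde{\B}_2^t)}{\hat{\pi}_1^{t+1}-\tilde{\pi}_1^t}\ \geq\ -\tfrac{1}{2\alpha}\Norm{\hat{g}(\w^t;\B_2^t)-\hat{g}(\w^t;\tilde{\B}_2^t)}^2-\tfrac{\alpha}{2}\Norm{\hat{\pi}_1^{t+1}-\tilde{\pi}_1^t}^2,
\end{equation*}
and invoking $\tfrac{1}{L_f}$-strong convexity of $\bD_{f^*}$ (from $L_f$-smoothness of $f$ and the standard duality identity) to obtain $\tau'\bD_{f^*}(\tilde{\pi}_1^t,\hat{\pi}_1^{t+1})\geq \tfrac{\tau'}{2L_f}\Norm{\hat{\pi}_1^{t+1}-\tilde{\pi}_1^t}^2$, the natural calibration is $\alpha=\tau'/L_f$, which makes the quadratic in $\Norm{\hat{\pi}_1^{t+1}-\tilde{\pi}_1^t}^2$ vanish and leaves a coefficient $L_f/(2\tau')$ (or, under the paper's scaling convention, $L_f/(2(\tau')^2)$) in front of $\Norm{\hat{g}(\w^t;\B_2^t)-\hat{g}(\w^t;\tilde{\B}_2^t)}^2$. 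The delicate point is ensuring that this cancellation is tight: picking $\alpha$ too small wastes the strong convexity budget, while picking $\alpha$ too large leaves a positive quadratic that cannot be absorbed — matching exactly the coefficient in the stated bound pins down $\alpha$ uniquely.
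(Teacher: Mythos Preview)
Your proposal is correct and follows essentially the same route as the paper: two applications of the three-point inequality (one for $\hat{\pi}_1^{t+1}$ and one for $\tilde{\pi}_1^t$), the cancellation of the shared $\bD_{f^*}(\hat{\pi}_1^t,\hat{\pi}_1^{t+1})$ term, and then Young's inequality calibrated against the $\tfrac{1}{L_f}$-strong convexity of $f^*$ to absorb $\|\hat{\pi}_1^{t+1}-\tilde{\pi}_1^t\|^2$. Your observation that the honest constant is $L_f/(2\tau')$ rather than $L_f/(2(\tau')^2)$ is also correct --- the paper's own derivation yields $L_f/(2\tau')$ before the final ``re-arranging'' line, so the extra $\tau'$ in the statement appears to be a typo.
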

\begin{proof}
	The proof of this lemma is almost the same to that of Lemma 4 in \citet{juditsky2011solving}. Due to the three-point inequality (Lemma 1 in \citealt{zhang2020optimal}), we have:
	\begin{align}\label{eq:3p_1}
		\bD_{f^*}(\hat{\pi}_1^t, \hat{\pi}_1^{t+1}) & \geq \bD_{f^*}(\tilde{\pi}_1^t, \hat{\pi}_1^{t+1}) - \frac{1}{\tau'}\inner{\hat{g}(\w^t;\tilde{\B}_2^t) - g(\w^t)}{\hat{\pi}_1^{t+1} - \tilde{\pi}_1^t} + \bD_{f^*}(\hat{\pi}_1^t,\tilde{\pi}_1^t),\\\label{eq:3p_2}
		\bD_{f^*}(\hat{\pi}_1^{t+1},\pi_1) &\leq \bD_{f^*}(\hat{\pi}_1^t,\pi_1) + \frac{1}{\tau'}\inner{\hat{g}(\w^t;\B_2^t) - g(\w^t)}{\pi_1 - \hat{\pi}_1^{t+1}} - \bD_{f^*}(\hat{\pi}_1^t,\hat{\pi}_1^{t+1}),
	\end{align}
	where $\pi_1$ could be any $\pi_1\in\Pi_1$. The last term on the R.H.S. of \eqref{eq:3p_2} can be upper bounded by \eqref{eq:3p_1}. 
	\begin{align*}
		\bD_{f^*}(\hat{\pi}_1^{t+1},\pi_1) &\leq \bD_{f^*}(\hat{\pi}_1^t,\pi_1) +\frac{1}{\tau'}\inner{\hat{g}(\w^t;\B_2^t) - g(\w^t)}{\pi_1 - \tilde{\pi}_1^t} \\
		& \quad\quad\quad + \frac{1}{\tau'}\inner{\hat{g}(\w^t;\B_2^t)  - \hat{g}(\w^t;\tilde{\B}_2^t) }{\tilde{\pi}_1^t - \hat{\pi}_1^{t+1}} - \bD_{f^*}(\hat{\pi}_1^t,\tilde{\pi}_1^t) - \bD_{f^*}(\tilde{\pi}_1^t,\hat{\pi}_1^{t+1})
	\end{align*}
	Considering the strong convexity, we have
	\begin{align*}
		\bD_{f^*}(\hat{\pi}_1^{t+1},\pi_1) &\leq \bD_{f^*}(\hat{\pi}_1^t,\pi_1) +\frac{1}{\tau'}\inner{\hat{g}(\w^t;\B_2^t) - g(\w^t)}{\pi_1 - \tilde{\pi}_1^t} \\
		& \quad\quad\quad + \frac{1}{\tau'}\inner{\hat{g}(\w^t;\B_2^t)  - \hat{g}(\w^t;\tilde{\B}_2^t) }{\tilde{\pi}_1^t - \hat{\pi}_1^{t+1}}-\frac{1}{2L_f}\Norm{\hat{\pi}_1^t-\tilde{\pi}_1^t}^2 - \frac{1}{2L_f}\Norm{\tilde{\pi}_1^t - \hat{\pi}_1^{t+1}}^2
	\end{align*}
	Based on the Young's inequality, we further have
	\begin{align*}
		\frac{1}{\tau'}\inner{\hat{g}(\w^t;\B_2^t)  - \hat{g}(\w^t;\tilde{\B}_2^t) }{\tilde{\pi}_1^t - \hat{\pi}_1^{t+1}} &\leq \frac{L_f}{2(\tau')^2}\Norm{\hat{g}(\w^t;\B_2^t)  - \hat{g}(\w^t;\tilde{\B}_2^t)}^2 +  \frac{1}{2L_f}\Norm{\tilde{\pi}_1^t - \hat{\pi}_1^{t+1}}^2.
	\end{align*}
	Re-arranging the terms leads to
	\begin{align*}
		\inner{\hat{g}(\w^t;\B_2^t) - g(\w^t)}{\pi_1 - \tilde{\pi}_1^t} & \geq \tau' (\bD_{f^*}(\hat{\pi}_1^{t+1},\pi_1) - \bD_{f^*}(\hat{\pi}_1^t,\pi_1)) - \frac{L_f}{2\tau'}\Norm{\hat{g}(\w^t;\B_2^t)  - \hat{g}(\w^t;\tilde{\B}_2^t)}^2.
	\end{align*}
\end{proof} 
We can decompose $Q_i(\y^t,\y)$ into three terms
\begin{align*}
	Q_{i,2}(\y^t,\y) & = \L_{i,1}(\w^t,\pi_{i,1},\pi_{i,2}) - \L_{i,1}(\w^t,\pi_{i,1},\pi_{i,2}^{t+1})  \\
	& = \pi_{i,1}\left(\inner{\pi_{i,2}}{\w^t} - g_i^*(\pi_{i,2}) - \inner{\pi_{i,2}^{t+1}}{\w^t} + g_i^*(\pi_{i,2}^{t+1})\right),\\
	Q_{i,1}(\y^t,\y) &= \L_{i,1}(\w^t,\pi_{i,1},\pi_{i,2}^{t+1}) - \L_{i,2}(\w^t,\pi_{i,1}^{t+1},\pi_{i,2}^{t+1})\\&= \pi_{i,1}\L_{i,2}(\w^t,\pi_{i,2}^{t+1}) - f^*(\pi_{i,1}) - \pi_{i,1}^{t+1}\L_{i,2}(\w^t,\pi_{i,2}^{t+1}) + f^*(\pi_{i,1}^{t+1}), \\
	Q_{i,0}(\y^t,\y) &= \L_{i,1}(\w^t,\pi_{i,1}^{t+1},\pi_{i,2}^{t+1}) - \L_{i,1}(\w;\pi_{i,1}^{t+1},\pi_{i,2}^{t+1}) = \pi_i^{t+1} \inner{\pi_{i,2}^{t+1}}{\w^t-\w}.
\end{align*}

We upper bound these terms one by one by the following lemmas.

\begin{lemma}\label{lem:bound_Q2}
	We have $Q_{i,2}(\y^t,\y) \leq 0$ for any $\z_i\in\D$ and any $t=0,\dotsc, T-1$.
\end{lemma}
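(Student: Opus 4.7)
\textbf{Proof proposal for Lemma~\ref{lem:bound_Q2}.} The plan is to observe that $Q_{i,2}(\y^t,\y^*)$ factors as a product of two quantities, one of which is non-positive by a Fenchel-duality optimality argument and the other non-negative by the monotonicity of $f$. So the idea is just to verify each factor separately and multiply.

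First I would argue that the bracketed term
\[
\inner{\bar{\pi}_{i,2}^*}{\w^t} - g_i^*(\bar{\pi}_{i,2}^*) - \inner{\pi_{i,2}^{t+1}}{\w^t} + g_i^*(\pi_{i,2}^{t+1})
\]
is non-positive. By construction (see the primal--dual update \eqref{eq:pd1}) we have $\pi_{i,2}^{t+1}=\nabla g_i(\w^t)$, which is precisely the maximizer of $\pi_{i,2}\mapsto \inner{\pi_{i,2}}{\w^t}-g_i^*(\pi_{i,2})$ because $g_i$ is convex and $g_i(\w^t)=\max_{\pi_{i,2}}\{\inner{\pi_{i,2}}{\w^t}-g_i^*(\pi_{i,2})\}$ is attained at the subgradient of $g_i$ at $\w^t$. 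Hence for \emph{any} $\bar{\pi}_{i,2}^*$, including the one defined after Theorem~\ref{thm:cvx_sox-simp}, the bracketed expression is $\leq 0$.

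Next I would show $\bar{\pi}_{i,1}^*\geq 0$. Under Assumption~\ref{asm:bounded_dom} the function $f$ is convex and monotonically increasing on $\mathbb{R}$, which forces $\mathrm{dom}(f^*)\subseteq [0,\infty)$: any admissible dual variable must satisfy $\pi_{i,1}\geq 0$, else $\pi_{i,1} u - f(u)\to +\infty$ as $u\to -\infty$ and $f^*(\pi_{i,1})=+\infty$. Since $\bar{\pi}_{i,1}^*=\arg\max_{\pi_{i,1}}\{\pi_{i,1}\L_{i,2}(\bar{\w}^T,\bar{\pi}_{i,2}^*)-f^*(\pi_{i,1})\}$ lies in $\mathrm{dom}(f^*)$, it is non-negative.

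Combining the two observations gives that $Q_{i,2}(\y^t,\y^*)$ is the product of a non-negative number and a non-positive number, hence $\leq 0$, as claimed. I do not foresee any obstacle here; the only subtlety is verifying that the definition of $\bar{\pi}_{i,1}^*$ indeed yields a non-negative value, which follows directly from the domain of $f^*$ under the monotonicity hypothesis.
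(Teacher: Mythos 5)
Your proposal is correct and follows essentially the same route as the paper: the paper's proof likewise notes that $0\leq \bar{\pi}_{i,1}^*\leq C_f$ by monotonicity/convexity of $f$ (you merely spell out the $\mathrm{dom}(f^*)\subseteq[0,\infty)$ reasoning behind this) and that the bracketed Fenchel term is non-positive because $\pi_{i,2}^{t+1}$ is the maximizer of $\pi_{i,2}\mapsto\inner{\pi_{i,2}}{\w^t}-g_i^*(\pi_{i,2})$, then multiplies the two signs.
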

\begin{proof}
	Since $f$ is Lipschitz-continuous, convex and monotonically increasing, we have $0\leq \pi_{i,1}\leq C_f$. Besides, $\inner{\pi_{i,2}}{\w_t} - g_i^*(\pi_{i,2})\leq \inner{\pi_{i,2}^{t+1}}{\w_t} - g_i^*(\pi_{i,2}^{t+1})$ due to  $\pi_{i,2}^{t+1} = \arg\max_{\pi_{i,2}}\pi_{i,2}^\top \w_t - g_i^*(\pi_{i,2})$. We can conclude that $Q_{i,2}(\y_t,\y) \leq 0$. 
\end{proof}

\begin{lemma}\label{lem:bound_Q1}
	For any valid $\y=(\w, \pi_1, \pi_2)$, the term $\sum_{t=0}^{T-1}\E\left[\frac{1}{n}\sum_{\z_i\in\D} Q_{i,1}(\y^t,\y)\right]$ can be upper bounded as
	\begin{align*}
		&\sum_{t=0}^{T-1}\E\left[\frac{1}{n}\sum_{\z_i\in\D} Q_{i,1}(\y^t,\y)\right]\\ &\leq \frac{2n(C_fC_gC_\Omega+C_{f^*})}{B_1} + \frac{n\eta C T}{B_1} + \frac{\tau n L_fC_g^2C_\Omega^2}{B_1} + \frac{L_f T\sigma^2}{\tau B_2} + \frac{B_1 T L_f\sigma^2}{n \tau B_2} \colorbox{gray!40}{$-  \frac{\tau}{2B_1}\E\left[\sum_{t=0}^{T-1}\bD_{f^*}(\pi_1^t,\pi_1^{t+1})\right]$}.
	\end{align*}	
\end{lemma}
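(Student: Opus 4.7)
The plan is to apply a proximal three-point inequality to the $\pi_{i,1}$-update rule, telescope the resulting Bregman divergences across $t$, and reconcile the per-coordinate bound (which only fires when $\z_i\in\B_1^t$) with the full average $\frac{1}{n}\sum_{\z_i\in\D}Q_{i,1}(\y^t,\y^*)$. The main tools are the $1/L_f$-strong convexity of $f^*$ (dual to the $L_f$-smoothness of $f$), the unbiasedness and $\sigma^2/B_2$ variance of $g_i(\w^t;\B_{i,2}^t)$ (Assumption~\ref{asm:var}), and the uniform sampling probability $\mathbb{P}[\z_i\in\B_1^t]=B_1/n$.

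Concretely, for $\z_i\in\B_1^t$ the proximal optimality of $\pi_{i,1}^{t+1}$ evaluated at the test point $\bar{\pi}_{i,1}^*$ gives $Q_{i,1}(\y^t,\y^*)\leq \tau[D_{f^*}(\pi_{i,1}^t,\bar{\pi}_{i,1}^*)-D_{f^*}(\pi_{i,1}^{t+1},\bar{\pi}_{i,1}^*)]-\tau D_{f^*}(\pi_{i,1}^t,\pi_{i,1}^{t+1})+(\pi_{i,1}^{t+1}-\bar{\pi}_{i,1}^*)(g_i(\w^t;\B_{i,2}^t)-g_i(\w^t))$. I split the stochastic cross-term as $(\pi_{i,1}^{t+1}-\pi_{i,1}^t)+(\pi_{i,1}^t-\bar{\pi}_{i,1}^*)$: Young's inequality combined with $\|\pi_{i,1}^{t+1}-\pi_{i,1}^t\|^2\leq 2L_f D_{f^*}(\pi_{i,1}^t,\pi_{i,1}^{t+1})$ (strong convexity of $f^*$) bounds the first piece by $\tfrac{\tau}{2}D_{f^*}(\pi_{i,1}^t,\pi_{i,1}^{t+1})+\tfrac{L_f}{\tau}\|g_i(\w^t;\B_{i,2}^t)-g_i(\w^t)\|^2$, halving the $-\tau D_{f^*}$ term and leaving the highlighted $-\tfrac{\tau}{2}D_{f^*}$ residual earmarked for cancellation with its positive counterpart in Lemma~\ref{lem:bound_Q0}; the second piece has zero conditional expectation because $\B_{i,2}^t$ is independent of $(\pi_{i,1}^t,\w^t,\bar{\pi}_{i,1}^*)$ and $\E[g_i(\w^t;\B_{i,2}^t)\mid\w^t]=g_i(\w^t)$.

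Summing over $t$, the Bregman differences telescope cleanly across all iterations — including $\z_i\notin\B_1^t$, where $\pi_{i,1}^{t+1}=\pi_{i,1}^t$ — leaving an initial term $\tau D_{f^*}(\pi_{i,1}^0,\bar{\pi}_{i,1}^*)$ (bounded by $2\tau L_f C_g^2 C_\Omega^2$ via the identity $D_{f^*}(\nabla f(u^0),\nabla f(\bar{u}^*))=D_f(\bar{u}^*,u^0)\leq \tfrac{L_f}{2}(2C_gC_\Omega)^2$) and a non-positive terminal term $-\tau D_{f^*}(\pi_{i,1}^T,\bar{\pi}_{i,1}^*)$ that is dropped. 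To lift the bound from $\sum_{\z_i\in\B_1^t}$ to $\frac{1}{n}\sum_{\z_i\in\D}$, I exploit that $\pi_{i,1}^t$ is piecewise constant between sampling events of $\z_i$: writing $\ell_i^t(\pi_{i,1}^{t+1})-\ell_i^t(\bar{\pi}_{i,1}^*)=[\ell_i^{t_0}(\pi_{i,1}^{t_0+1})-\ell_i^{t_0}(\bar{\pi}_{i,1}^*)]-[\pi_{i,1}^{t_0+1}-\bar{\pi}_{i,1}^*][g_i(\w^t)-g_i(\w^{t_0})]$ with $t_0$ the most recent sampling time of $\z_i$, the Lipchitz drift obeys $\|g_i(\w^t)-g_i(\w^{t_0})\|\leq C_g\eta\sum_{s=t_0}^{t-1}\|\v^s\|\leq C_g\eta C(t-t_0)$. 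Taking expectation and dividing by $\mathbb{P}[\z_i\in\B_1^t]=B_1/n$ introduces the $n/B_1$ factor in front of the initial-Bregman, boundary and drift pieces, producing the $\tfrac{\tau n L_f C_g^2 C_\Omega^2}{B_1}$, $\tfrac{2n(C_fC_gC_\Omega+C_{f^*})}{B_1}$ and $\tfrac{n\eta CT}{B_1}$ contributions; the variance pieces $\tfrac{L_f T\sigma^2}{\tau B_2}$ and $\tfrac{B_1 TL_f\sigma^2}{n\tau B_2}$ emerge from accumulating $\tfrac{L_f}{\tau}\|g_i(\w^t;\B_{i,2}^t)-g_i(\w^t)\|^2$ under the two natural scalings (post- versus pre-rescaling).

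The main obstacle is this drift bookkeeping: the three-point inequality is naturally a per-update statement, so one must telescope across the constant-$\pi_{i,1}$ stretches and pay the Lipchitz drift of $\ell_i^t$ during those stretches, all while preserving the residual $-\frac{\tau}{2B_1}\bD_{f^*}(\pi_1^t,\pi_1^{t+1})$ in exactly the form required. This latter property — that a single batch $\B_{i,2}^t$ suffices thanks to cancellation against Lemma~\ref{lem:bound_Q0} — is what removes the two-independent-batch requirement of \citet{zhang2020optimal}.
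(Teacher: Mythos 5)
Your overall skeleton (three-point inequality for the $\pi_{i,1}$-update, Young's inequality via the $1/L_f$-strong convexity of $f^*$ to halve the $-\tau D_{f^*}(\pi_{i,1}^t,\pi_{i,1}^{t+1})$ term and earmark the residual for cancellation with Lemma~\ref{lem:bound_Q0}) matches the paper. But there is a genuine gap at the step where you discard the cross term $(\pi_{i,1}^t-\bar{\pi}_{i,1}^*)\bigl(g_i(\w^t;\B_{i,2}^t)-g_i(\w^t)\bigr)$ by claiming it has zero conditional expectation because ``$\B_{i,2}^t$ is independent of $(\pi_{i,1}^t,\w^t,\bar{\pi}_{i,1}^*)$''. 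The comparator $\bar{\pi}_{i,1}^*$ is \emph{not} independent of $\B_{i,2}^t$: it is defined through $\bar{\w}=\frac{1}{T}\sum_{t=0}^{T-1}\w^t$, the average of the entire trajectory, and the iterates after time $t$ depend on $\B_{i,2}^t$. So $\E\bigl[(\pi_{i,1}^t-\bar{\pi}_{i,1}^*)(g_i(\w^t;\B_{i,2}^t)-g_i(\w^t))\bigr]\neq 0$ in general, and your argument collapses exactly at the point that makes this lemma nontrivial. The paper resolves this with the ghost-iterate device of Lemma~\ref{lem:juditsky_variant} (in the spirit of Juditsky et al.): it introduces auxiliary sequences $\hat{\pi}_1^t$ and $\tilde{\pi}_1^t$, the latter driven by a virtual batch $\tilde{\B}_2^t$ independent of $\B_2^t$, splits the noise term as $\inner{\hat{g}-g}{\bar{\pi}_1^*-\tilde{\pi}_1^t}+\inner{\hat{g}-g}{\tilde{\pi}_1^t-\pi_1^t}$, kills only the second piece by conditional expectation (legitimate, since both $\tilde{\pi}_1^t$ and $\pi_1^t$ are independent of $\B_2^t$), and controls the first piece deterministically by telescoping $\bD_{f^*}(\hat{\pi}_1^t,\bar{\pi}_1^*)$ at the price of a variance term $\frac{L_f}{2(\tau')^2}\Norm{\hat{g}(\w^t;\B_2^t)-\hat{g}(\w^t;\tilde{\B}_2^t)}^2$ with $\tau'=n\tau/B_1$. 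That term is precisely the origin of the second variance contribution $\frac{B_1TL_f\sigma^2}{n\tau B_2}$ in the statement; your proposal has no mechanism that produces it (your ``post- versus pre-rescaling'' remark does not correspond to an actual derivation), which is a symptom of the missing ghost-iterate argument.

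Two smaller points. First, your drift bookkeeping (telescoping over the constant-$\pi_{i,1}$ stretches back to the last sampling time $t_0$ and paying drift proportional to $t-t_0$) differs from the paper, which simply compares $h(\w^t,\pi_1^t)-h(\w^t,\bar\pi_1^*)$ with $\Delta^{t-1}=h(\w^{t-1},\pi_1^t)-h(\w^{t-1},\bar\pi_1^*)$ and pays a per-iteration drift $n\eta C$; your variant could probably be made rigorous but needs care because the stretch length is random and correlated with the iterates, whereas the per-iteration comparison avoids this entirely. Second, your bound $D_{f^*}(\pi_{i,1}^0,\bar{\pi}_{i,1}^*)\leq \frac{L_f}{2}(2C_gC_\Omega)^2$ loses a factor of $4$ relative to the paper's $\frac{L_f}{2}\Norm{u_i^0-g_i(\bar\w)}^2\leq\frac{L_fC_g^2C_\Omega^2}{2}$ (using $u_i^0=g_i(\w^0)$ and Lipschitzness of $g_i$ on the bounded domain), which would not reproduce the stated constant $\frac{\tau nL_fC_g^2C_\Omega^2}{B_1}$; this is cosmetic compared with the independence issue above.
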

\begin{proof}
		We define that $\bff^*(\pi_1) \coloneqq \sum_{\z_i\in\D} f_i^*(\pi_{i,1})$ for any $\pi_1\in\Pi_1$,  $$\phi(\w^t,\pi_1)\coloneqq -\sum_{\z_i\in\D}\pi_{i,1}\L_2^i(\w^t;\pi_{i,2}^{t+1}) = -\sum_{\z_i\in\D} \pi_{i,1} g_i(\w^t), \quad h(\w^t,\pi_1) \coloneqq \phi(\w^t,\pi_1) + \sum_{\z_i\in\D} f_i^*(\pi_{i,1}).$$ Due to the update of rule of $\pi_{i,1}^t$ ($\pi_{i,1}^{t+1}=\pi_{i,1}^t$ for $i\notin \B_1^t$) and the convexity, we have
	\begin{align*}
		h(\w^t,\pi_1^{t+1}) &= \phi(\w^t,\pi_1^t) + (\phi(\w^t,\pi_1^{t+1})-\phi(\w^t,\pi_1^t)) +  \bff^*(\pi_1^{t+1})\\
		& \leq \phi(\w^t,\pi_1^t) \underbrace{- \sum_{\z_i\in\B_1^t} g_i(\w^t;\B_{i,2}^t) (\pi_{i,1}^{t+1} - \pi_{i,1}^t) + \sum_{\z_i\in\B_1^t}f_i^*(\pi_{i,1}^{t+1})}_{\coloneqq \text{\ding{161}}} + \sum_{\z_i\notin\B_1^t}f_i^*(\pi_{i,1}^{t+1})\\
		& \quad\quad\quad + \sum_{\z_i\in\B_1^t} (g_i(\w^t;\B_{i,2}^t) - g_i(\w^t))(\pi_{i,1}^{t+1} - \pi_{i,1}^t)
	\end{align*}
	Applying the three-point inequality (e.g. Lemma 1 of \citealt{zhang2020optimal}) leads to
	\begin{align*}
		& -\pi_{i,1}^{t+1} g_i(\w^t;\B_{i,2}^t) +   f_i^*(\pi_{i,1}^{t+1}) + \tau D_{f_i^*}(\pi_{i,1}^{t+1}, \pi_{i,1}) + \tau D_{f_i^*}(\pi_{i,1}^t,\pi_{i,1}^{t+1})\\
		& \leq -\pi_{i,1}g_i(\w^t;\B_{i,2}^t) + f_i^*(\pi_{i,1}) + \tau D_{f_i^*}(\pi_{i,1}^t, \pi_{i,1}),\quad \z_i\in\B_1^t.
	\end{align*}
	Add $\pi_{i,1}^t g_i(\w^t;\B_{i,2}^t)$ on both sides and re-arrange the terms. For $\z_i\in\B_1^t$, we have
	\begin{align*}
		& -g_i(\w^t;\B_{i,2}^t)(\pi_{i,1}^{t+1} - \pi_{i,1}^t ) +  f_i^*(\pi_{i,1}^{t+1}) \\
		& \leq -g_{i_t}(\w^t;\B_{i,2}^t)(\pi_{i,1} - \pi_{i,1}^t ) + f_i^*(\pi_{i,1}) + \tau D_{f^*}(\pi_{i,1}^t, \pi_{i,1}) - \tau D_{f_i^*}(\pi_{i,1}^{t+1}, \pi_{i,1}) - \tau D_{f_i^*}(\pi_{i,1}^t,\pi_{i,1}^{t+1}).
	\end{align*}
	The \ding{161} term can be upper bounded by summing the inequality above over all $\z_i\in\B_1^t$. Besides, note that $\pi_{i,1}^{t+1} = \pi_{i,1}^t$ for $\z_i\notin\B_1^t$ such that $\sum_{\z_i\notin\B_1^t} f_i^*(\pi_{i,1}^{t+1}) = \sum_{\z_i\notin\B_1^t} f_i^*(\pi_{i,1}^t)$.
	\begin{align*}
		& h(\w^t,\pi_1^{t+1}) \leq 	 \phi(\w^t,\pi_1^t) - \sum_{\z_i\in\B_1^t} g_i(\w^t;\B_{i,2}^t)(\pi_{i,1} - \pi_{i,1}^t ) + \sum_{\z_i\in\B_1^t} f_i^*(\pi_{i,1}) + \sum_{\z_i\notin\B_1^t}f_i^*(\pi_{i,1}^t) \\
		&  \quad\quad\quad + \tau \sum_{\z_i\in\B_1^t} D_{f_i^*}(\pi_{i,1}^t, \pi_{i,1})  - \tau\sum_{\z_{i_t}\in\B_1^t} D_{f_i^*}(\pi_{i,1}^{t+1}, \pi_{i,1}) \\
		&  \quad\quad\quad- \tau \sum_{\z_i\in\B_1^t} D_{f_i^*}(\pi_{i,1}^t,\pi_{i,1}^{t+1})+ \sum_{\z_i\in\B_1^t} \underbrace{(g_i(\w^t;\B_{i,2}^t) - g_i(\w^t))(\pi_{i,1}^{t+1} - \pi_{i,1}^t)}_{\coloneqq   \circledast}.
	\end{align*}
	Based on the $\frac{1}{L_f}$-strong convexity of $f^*(\pi_{i,1})$, the $ \circledast$ term for $\z_i\in\B_1^t$ can be bounded as
	\begin{align*}
		(g_i(\w^t;\B_{i,2}^t) - g_i(\w^t))(\pi_{i,1}^{t+1} - \pi_{i,1}^t) & \leq \frac{L_f}{\tau}\Norm{g_i(\w^t;\B_{i,2}^t) - g_i(\w^t)}^2 + \frac{\tau}{4L_f}\Norm{\pi_{i,1}^{t+1} - \pi_{i,1}^t}^2\\
		& \leq \frac{L_f}{\tau}\Norm{g_i(\w^t;\B_{i,2}^t) - g_i(\w^t)}^2 + \frac{\tau}{2} D_{f_i^*}(\pi_{i,1}^t, \pi_{i,1}^{t+1}).
	\end{align*}
	Taking the upper bound of $ \circledast$ into consideration leads to
	\begin{align*}
		h(\w^t,\pi_1^{t+1}) &\leq  \phi(\w^t,\pi_1^t) \underbrace{- \sum_{\z_i\in\B_1^t} g_i(\w^t;\B_{i,2}^t)(\pi_{i,1} - \pi_{i,1}^t )}_{\coloneqq \text{\ding{164}}} + \sum_{\z_i\in\B_1^t} f_i^*(\pi_{i,1}) +\sum_{\z_i\notin\B_1^t} f_i^*(\pi_{i,1}^t) \\
		& - \frac{\tau}{2} \sum_{\z_i\in\B_1^t}D_{f_i^*}(\pi_{i,1}^t,\pi_{i,1}^{t+1}) + \underbrace{\tau \sum_{\z_i\in\B_1^t} D_{f_i^*}(\pi_{i,1}^t, \pi_{i,1}) - \tau\sum_{\z_i\in\B_1^t} D_{f_i^*}(\pi_{i,1}^{t+1}, \pi_{i,1})}_{\coloneqq \text{\ding{169}}} \\
		& + \frac{L_f}{\tau}\sum_{i\in\B_1^t} \Norm{g_i(\w^t;\B_{i,2}^t) - g_i(\w^t)}^2 .
	\end{align*}
	Define that $\hat{g}(\w^t;\B_2^t) =\sum_{\z_i\in\B_1^t} \frac{n}{B_1} g_i(\w^t;\B_{i,2}^t)\e_i$ and $g(\w^t) = \sum_{\z_i\in\D} g_i(\w^t)\e_i$, where $\e_i\in\R^n$ is the indicator vector that only the $i$-th element is 1 while the others are 0. Note that $\E\left[\hat{g}(\w^t;\B_2^t) \right] = g(\w^t)$. Then, \ding{164} can be decomposed as
	\begin{align*}
		\text{\ding{164}} &= -\frac{B_1}{n}\sum_{\z_i\in\B_1^t} \frac{n}{B_1}g_i(\w^t;\B_{i,2}^t)(\pi_{i,1} - \pi_{i,1}^t)\\
		& = -\frac{B_1}{n}\inner{\hat{g}(\w^t;\B_2^t)}{\pi_1 - \pi_1^t}\\
		& =  -\frac{B_1}{n}\inner{g(\w^t)}{\pi_1 - \pi_1^t} - \frac{B_1}{n}\inner{\hat{g}(\w^t;\B_2^t) - g(\w^t)}{\pi_1 - \pi_1^t}\\
		& = -\frac{B_1}{n}\inner{g(\w^t)}{\pi_1 - \pi_1^t} - \frac{B_1}{n}\underbrace{\inner{\hat{g}(\w^t;\B_2^t) - g(\w^t)}{\pi_1 - \tilde{\pi}_1^t}}_{\coloneqq \text{\ding{168}}} - \frac{B_1}{n}\underbrace{\inner{\hat{g}(\w^t;\B_2^t) - g(\w^t)}{\tilde{\pi}_1^t - \pi_1^t}}_{\coloneqq \text{\ding{166}}},
	\end{align*}
	where $\tilde{\pi}_1^t$ is defined as
	\begin{align*}
		\tilde{\pi}_1^t &= \arg\min_{\pi_1}  \inner{\hat{g}(\w^t;\tilde{\B}_2^t) - g(\w^t)}{\pi_1} + \tau' \bD_{f^*}(\hat{\pi}_1^t, \pi_1),\\
		\hat{\pi}_1^{t+1} & = \arg\min_{\pi_1}  \inner{\hat{g}(\w^t;\B_2^t) - g(\w^t)}{\pi_1} + \tau' \bD_{f^*}(\hat{\pi}_1^t, \pi_1), 
	\end{align*}
	where $\tau'>0$, $\tilde{\B}_2^t$ is a ``virtual batch'' (never sampled in the algorithm) that is independent of but has the same size as $\B_2^t$. Based on Lemma~\ref{lem:juditsky_variant}, \ding{168} can be lower bounded as
	\begin{align*}
		\inner{\hat{g}(\w^t;\B_2^t) - g(\w^t)}{\pi_1 - \tilde{\pi}_1^t} & \geq -\tau' \bD_{f^*}(\hat{\pi}_1^t, \pi_1) + \tau' \bD_{f^*}(\hat{\pi}_1^{t+1}, \pi_1) - \frac{L_f}{2\tau'}\Norm{\hat{g}(\w^t;\B_2^t) - \hat{g}(
			\w^t;\tilde{\B}_2^t)}^2. 
	\end{align*}
	Thus, taking the expectation of the equation above w.r.t. the randomness in iteration $t$ leads to
	\begin{align*}
		\E_t\left[\frac{B_1}{n}\inner{\hat{g}(\w^t;\B_2^t) - g(\w^t)}{\pi_1 - \tilde{\pi}_1^t} \right] & \geq \frac{\tau' B_1}{n}\left(-\E_t\left[\bD_{f^*}(\hat{\pi}_1^t, \pi_1)\right] + \E_t\left[\bD_{f^*}(\hat{\pi}_1^{t+1}, \pi_1)\right]\right) - \frac{B_1\sigma^2 L_f}{\tau' B_2}.
	\end{align*}
	Note that $\E_t\left[\text{\ding{166}}\right] = \E_t\left[\inner{\hat{g}(\w^t;\B_2^t) - g(\w^t)}{\tilde{\pi}_1^t - \pi_1^t}\right] = 0$ since both $\pi_1^t$ and $\tilde{\pi}_1^t$ are independent of $\B_2^t$. Besides, we have $\text{\ding{169}} = \sum_{\z_i\in\B_1^t} D_{f_i^*}(\pi_{i,1}^t, \pi_{i,1}) - \sum_{\z_i\in\B_1^t} D_{f_i^*}(\pi_{i,1}^{t+1}, \pi_{i,1}) = \sum_{\z_i\in\D} D_{f_i^*}(\pi_{i,1}^t, \pi_{i,1}) - \sum_{\z_i\in\D} D_{f_i^*}(\pi_{i,1}^{t+1}, \pi_{i,1}) = \bD_{f^*}(\pi_1^t,\pi_1) - \bD_{f^*}(\pi_1^{t+1}, \pi_1)$ and $\sum_{\z_i\in\B_1^t} D_{f_i^*}(\pi_{i,1}^t,\pi_{i,1}^{t+1}) = \sum_{\z_i\in\D} D_{f_i^*}(\pi_{i,1}^t,\pi_{i,1}^{t+1}) = \bD_{f^*}(\pi_1^t,\pi_1^{t+1})$ because $\pi_{i,1}^{t+1} = \pi_{i,1}^t$ for $i\notin\B_1^t$. 
	\begin{align}\label{eq:exp_Bt}
		& \E_t\left[h(\w^t,\pi_1^{t+1})\right] \\\nonumber
		&\leq  \phi(\w^t,\pi_1^t) + \left(1-\frac{B_1}{n}\right)\bff^*(\pi_1^t) + \frac{B_1 (\E_t\left[\phi(\w^t,\pi_1)\right] - \phi(\w^t,\pi_1^t))}{n} + \frac{B_1}{n} \E_t\left[\bff^*(\pi_1)\right]  \\\nonumber
		&\quad\quad  -\frac{\tau}{2}\E_t\left[\bD_{f^*}(\pi_1^t,\pi_1^{t+1})\right] + \frac{\tau' B_1}{n}(\E_t\left[\bD_{f^*}(\hat{\pi}_1^t, \pi_1)\right] - \E_t\left[\bD_{f^*}(\hat{\pi}_1^{t+1}, \pi_1)\right]) + \frac{nL_f\sigma^2}{\tau' B_2}  \\\nonumber
		&\quad\quad + \tau  \E_t\left[\bD_{f^*}(\pi_1^t,\pi_1)\right] - \tau \E_t\left[\bD_{f^*}(\pi_1^{t+1},\pi_1)\right] + \frac{L_f B_1\sigma^2}{\tau B_2}\\\nonumber
		& = \left(1-\frac{B_1}{n}\right) h(\w^t,\pi_1^t) + \frac{B_1}{n}\E_t\left[h(\w^t,\pi_1)\right] + \tau ( \E_t\left[\bD_{f^*}(\pi_1^t, \pi_1)\right]-  \E_t\left[\bD_{f^*}(\pi_1^{t+1}, \pi_1)\right])  \\\nonumber
		& \quad\quad - \frac{\tau}{2}\E_t\left[\bD_{f^*}(\pi_1^t,\pi_1^{t+1})\right] + \frac{\tau' B_1}{n}(\E_t\left[\bD_{f^*}(\hat{\pi}_1^t, \pi_1)\right] - \E_t\left[\bD_{f^*}(\hat{\pi}_1^{t+1}, \pi_1)\right]) \\\nonumber
		&\quad\quad + \frac{nL_f\sigma^2}{\tau' B_2} + \tau  \E_t\left[\bD_{f^*}(\pi_1^t,\pi_1)\right] - \tau \E_t\left[\bD_{f^*}(\pi_1^{t+1},\pi_1)\right] + \frac{L_f B_1\sigma^2}{\tau B_2}.
	\end{align}
	Subtract $\E_t\left[h(\w^t,\pi_1)\right]$ from both sides and use the tower property of conditional expectation.
	\begin{align*}
		& \E\left[h(\w^t,\pi_1^{t+1}) - h(\w^t,\pi_1)\right] \leq \left(1-\frac{B_1}{n}\right) \E\left[h(\w^t,\pi_1^t) -h(\w^t,\pi_1)\right] + \tau\E\left[ \bD_{f^*}(\pi_1^t, \pi_1)-  \bD_{f^*}(\pi_1^{t+1},\pi_1)\right]\\
		&\quad\quad\quad - \frac{\tau}{2}\E\left[\bD_{f^*}(\pi_1^t, \pi_1^{t+1})\right]+ \frac{L_f B_1\sigma^2}{\tau B_2} + \frac{L_f B_1 \sigma^2}{\tau' B_2} +  \frac{\tau' B_1}{n}\E\left[\bD_{f^*}(\hat{\pi}_1^t, \pi_1) - \bD_{f^*}(\hat{\pi}_1^{t+1},\pi_1)\right].
	\end{align*}
	Let $\Delta^t = h(\w^t,\pi_1^{t+1}) - h(\w^t,\pi_1)$. Thus,
	\begin{align*}
		& \E\left[h(\w^t,\pi_1^t) - h(\w^t,\pi_1) - \Delta^{t-1}\right] = \E\left[h(\w^t,\pi_1^t) - h(\w^t,\pi_1) - h(\w^{t-1},\pi_1^t) + h(\w^{t-1},\pi_1)\right]\\
		&= \E\left[\sum_{\z_i\in\D} (\pi_{i,1} - \pi_{i,1}^t) (g_i(\w^t)  - g_i(\w^{t-1}))\right] \leq n C_f C_g \eta \E\left[\Norm{\w^t-\w^{t-1}}\right]\leq n\eta C_f C_g  \sqrt{C_f^2(C_g^2 + \zeta^2/B_2)}.
	\end{align*}
	We define $C = C_f C_g  \sqrt{C_f^2(C_g^2 + \zeta^2/B_2)}$. Do the telescoping sum for $t=1,\dotsc, T$.
	\begin{align*}
		\E\left[\sum_{t=0}^{T-1} \frac{B_1}{n}\Delta^t\right] & \leq (\Delta^0-\Delta^T)  + n\eta C T + \tau \bD_{f^*}(\pi_1^0,\pi_1) -\frac{\tau}{2} \E\left[\sum_{t=0}^{T-1} \bD_{f^*}(\pi_1^t,\pi_1^{t+1})\right]\\
		& \quad\quad + \frac{L_f B_1\sigma^2 T}{\tau B_2} + \frac{B_1 TL_f\sigma^2}{\tau' B_2} +  \frac{\tau B_1}{n}\bD_{f^*}(\hat{\pi}_1^0,\pi_1) .
	\end{align*}
	Consider that $\frac{1}{n}\sum_{\z_i\in\D} Q_{i,1}(\y^t,\y) = \frac{1}{n} \left(h(\w^t,\pi_1^{t+1}) - h (\w^t,\pi_1)\right) = \frac{\Delta^t}{n}$.
	\begin{align*}
		\sum_{t=0}^{T-1}\E\left[\frac{1}{n}\sum_{\z_i\in\D} Q_{i,1}(\y^t,\y)\right] &\leq \frac{\Delta^0 - \Delta^T}{B_1} + \frac{n\eta C T}{B_1} + \frac{\tau \bD_{f^*}(\pi_1^0,\pi_1)}{B_1} + \frac{L_f T\sigma^2}{\tau B_2} + \frac{TL_f\sigma^2}{\tau' B_2} \\
		& \quad\quad + \frac{\tau' \bD_{f^*}(\hat{\pi}_1^0,\pi_1)}{n}- \frac{\tau}{2B_1}\E\left[\sum_{t=0}^{T-1}\bD_{f^*}(\pi_1^t,\pi_1^{t+1})\right].
	\end{align*}
	The numerator in the first term on the right hand side can be upper bounded as follows.
	\begin{align*}
		\Delta^0 - \Delta^T & = h(\w^0,\pi_1^1) - h(\w^0,\pi_1) - h(\w^T,\pi_1^{T+1})+h(\w^T,\pi_1) \\
		& = - \sum_{\z_i\in\D}(\pi_{i,1}^1-\pi_{i,1})g_i(\w^0) - \sum_{\z_i\in\D} (\pi_{i,1} - \pi_{i,1}^{T+1})g_i(\w^T) + \sum_{\z_i\in\D} f_i^*(\pi_{i,1}^1)  - \sum_{\z_i\in\D} f_i^*(\pi_{i,1}^{T+1}) \\
		& \leq 2nC_f C_g C_\Omega + 2nC_{f^*},
	\end{align*}
	On the other hand, if we set $\hat{\pi}_{i,1}^0 =\nabla f(u_i^0)$ and $u_i^0 = g_i(\w^0)$, we have $D_{f_i^*}(\pi_{i,1}^0,\pi_{i,1}) = D_{f_i^*}(\nabla f(u_i^0), \nabla f(g_i(\bar{\w}))) = D_{f_i}(u_i^0, g_i(\bar{\w})) \leq \frac{L_f}{2}\Norm{u_i^0 - g_i(\bar{\w})}^2\leq \frac{L_f C_g^2 C_\Omega^2}{2}$ such that $\bD_{f^*}(\pi_1^0,\pi_1) = \sum_{\z_i\in\D} D_{f_i^*}(\pi_{i,1}^0,\pi_{i,1})\leq \frac{n L_f C_g^2 C_\Omega^2}{2}$. The proof concludes by setting $\tau' = \frac{n\tau}{B_1}>1$.
\end{proof}

\begin{lemma}\label{lem:bound_Q0}
	We have
	\begin{align*}
\sum_{t=0}^{T-1}\E\left[\frac{1}{n}\sum_{\z_i\in \D} Q_{i,0}(\y^t,\y)\right] & \leq \frac{T C_f^2C_g^2}{\eta} + \frac{T C_f^2 (\zeta^2+C_g^2)}{\eta\min\{B_1,B_2\}} + \frac{\eta C_\Omega^2}{2} + \colorbox{gray!40}{$\frac{\tau}{2B_1}\E\left[\sum_{t=0}^{T-1}\bD_{f^*}(\pi_1^t,\pi_1^{t+1})\right]$} \\
&  \quad\quad + \frac{B_1 L_f C_g^2 C_\Omega^2 T}{n\tau}.
	\end{align*}
	
\end{lemma}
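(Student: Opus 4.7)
Let $\widetilde{G}^t \coloneqq \frac{1}{B_1}\sum_{\z_i\in\B_1^t}\pi_{i,1}^t\pi_{i,2}^{t+1}(\B_{i,2}^t)$ be the stochastic gradient actually driving the $\w$-update in \eqref{eq:pd1}, and $\bar{G}^t \coloneqq \frac{1}{n}\sum_{\z_i\in\D}\pi_{i,1}^{t+1}\pi_{i,2}^{t+1}$. The key identity is $\frac{1}{n}\sum_{\z_i\in\D} Q_{i,0}(\y^t,\y^*) = \inner{\bar{G}^t}{\w^t-\w^*}$. The strategy is to split this inner product into a ``primal descent'' piece $\inner{\widetilde{G}^t}{\w^t-\w^*}$ handled by the projected-gradient three-point inequality, and a ``dual-lag'' correction $\inner{\bar{G}^t-\widetilde{G}^t}{\w^t-\w^*}$ handled by a Young/Bregman argument.

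\textbf{Step 1 (three-point).} First-order optimality of the $\w$-update yields the standard projected-gradient inequality
$$\inner{\widetilde{G}^t}{\w^t-\w^*} \leq \frac{1}{2\eta}\bigl(\|\w^t-\w^*\|^2-\|\w^{t+1}-\w^*\|^2\bigr) + \frac{\eta}{2}\|\widetilde{G}^t\|^2,$$
obtained after absorbing the $-\frac{1}{2\eta}\|\w^{t+1}-\w^t\|^2$ penalty through Young on the residual $\inner{\widetilde{G}^t}{\w^t-\w^{t+1}}$. Summing in $t$ telescopes the distance terms into $\frac{C_\Omega^2}{2\eta}$ via $\|\w^0-\w^*\|\leq C_\Omega$. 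For the expected gradient norm, decompose $\widetilde{G}^t = \E_t[\widetilde{G}^t] + (\widetilde{G}^t - \E_t[\widetilde{G}^t])$ and combine Assumption~\ref{asm:var} with $|\pi_{i,1}^t|\leq C_f$, $\|\pi_{i,2}^{t+1}\|\leq C_g$ to get $\E\|\widetilde{G}^t\|^2 \leq C_f^2C_g^2 + \frac{C_f^2(\zeta^2+C_g^2)}{\min\{B_1,B_2\}}$, which supplies the first three target terms.

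\textbf{Step 2 (dual lag).} Because $\pi_{i,1}^{t+1}=\pi_{i,1}^t$ for $\z_i\notin\B_1^t$ (selective update), write
$$\bar{G}^t-\widetilde{G}^t = \underbrace{\frac{1}{n}\sum_{\z_i\in\B_1^t}(\pi_{i,1}^{t+1}-\pi_{i,1}^t)\pi_{i,2}^{t+1}}_{(\mathrm{A})} + \underbrace{\frac{1}{n}\sum_{\z_i\in\D}\pi_{i,1}^t\pi_{i,2}^{t+1}-\widetilde{G}^t}_{(\mathrm{B})}.$$
The term $(\mathrm{B})$ has zero conditional mean while $\w^t$ is $\F_{t-1}$-measurable, so $\E[\inner{(\mathrm{B})}{\w^t-\w^*}]=0$ and it drops out after taking expectations. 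For $(\mathrm{A})$, apply Young coordinate-wise with parameter $\alpha=\frac{n\tau}{2L_fB_1}$ and use the $\frac{1}{L_f}$-strong convexity of $f^*$, i.e., $(\pi_{i,1}^{t+1}-\pi_{i,1}^t)^2\leq 2L_fD_{f^*}(\pi_{i,1}^t,\pi_{i,1}^{t+1})$, together with $|\inner{\pi_{i,2}^{t+1}}{\w^t-\w^*}|\leq C_gC_\Omega$; summing over $\z_i\in\B_1^t$, invoking the selective-update identity $\bD_{f^*}(\pi_1^t,\pi_1^{t+1})=\sum_{\z_i\in\B_1^t}D_{f^*}(\pi_{i,1}^t,\pi_{i,1}^{t+1})$, and upper-bounding $B_1/n\leq 1$ in the residual produces exactly the positive $\frac{\tau}{2B_1}\bD_{f^*}(\pi_1^t,\pi_1^{t+1})$ and the $\frac{B_1L_fC_g^2C_\Omega^2}{n\tau}$ per-iteration tail; a final sum over $t$ closes the bound.

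\textbf{Main obstacle.} The delicate point is matching Young's parameter $\alpha$ so the Bregman coefficient in $(\mathrm{A})$ is \emph{exactly} $\frac{\tau}{2B_1}$: any mismatch would leave an uncancelled $\bD_{f^*}$ residue when this lemma is later added to Lemma~\ref{lem:bound_Q1}, breaking the $O(1/\epsilon^2)$ rate. The selective-update identity is what makes the per-coordinate Young sum telescope into $\bD_{f^*}$ cleanly; this is precisely the structural feature that allows SOX to avoid the two-independent-inner-batch device of~\citet{zhang2020optimal}, since here a single batch $\B_{i,2}^t$ is used to estimate both $g_i(\w^t)$ and $\nabla g_i(\w^t)$ without needing a ``virtual'' copy.
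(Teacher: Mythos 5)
Your proposal is correct and follows essentially the same route as the paper's proof: a three-point/prox inequality for the $\w$-update, a zero-conditional-mean argument that kills the batch-vs-full gradient error against the pre-update iterate, and a Young step on the dual increment $(\pi_{i,1}^{t+1}-\pi_{i,1}^t)$ calibrated (via the $\frac{1}{L_f}$-strong convexity of $f^*$ and the selective update) so that the Bregman coefficient is exactly $\frac{\tau}{2B_1}$, ready to cancel against Lemma~\ref{lem:bound_Q1}. The only differences are cosmetic repackagings — you pair the dual increment with $\w^t-\w^*$ rather than $\w^{t+1}-\w^*$ and absorb the stochastic-gradient variance through $\frac{\eta}{2}\E\|\widetilde{G}^t\|^2$ instead of a separate Young step against $\w^{t+1}-\w^t$ — which yield the same bound up to factors of $2$.
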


\begin{proof}
		Based on the definition of $Q_{i,0}(\y^t,\y)$, we can derive that
	\begin{align}\nonumber
& \sum_{t=0}^{T-1}\E\left[\frac{1}{n}\sum_{\z_i\in\D} Q_{i,0}(\y^t,\y)\right]
= \sum_{t=0}^{T-1}\E\left[\frac{1}{n}\sum_{\z_i\in\D}  \inner{\pi_{i,1}^{t+1}\pi_{i,2}^{t+1}}{\w^t - \w}\right]\\\nonumber
& = \sum_{t=0}^{T-1}\E\left[\frac{1}{n}\sum_{\z_i\in\D} \inner{\pi_{i,1}^t\pi_{i,2}^{t+1}}{\w^{t+1} - \w}\right] + \sum_{t=0}^{T-1}\E\left[\frac{1}{n}\sum_{\z_i\in\D} \inner{\pi_{i,1}^{t+1}\pi_{i,2}^{t+1}}{\w^t - \w^{t+1}}\right]\\\label{eq:Q0_decomp}
& \quad\quad  +\sum_{t=0}^{T-1}\E\left[\frac{1}{n}\sum_{\z_i\in\D} \inner{(\pi_{i,1}^{t+1} - \pi_{i,1}^t)\pi_{i,2}^{t+1}}{\w^{t+1} - \w}\right].
\end{align}
	The second term on the right hand side of \eqref{eq:Q0_decomp} can be upper bounded by $$\sum_{t=0}^{T-1}\E\left[\frac{1}{n}\sum_{\z_i\in\D}  \inner{\pi_{i,1}^{t+1}\pi_{i,2}^{t+1}}{\w^t - \w^{t+1}}\right]\leq \E\left[\sum_{t=0}^{T-1}\left( \frac{C_f^2 C_g^2}{\eta} +\frac{\eta}{4}\Norm{\w^{t+1}-\w^t}^2\right)\right].$$
	The first term on the right hand side of \eqref{eq:Q0_decomp} can be upper bounded by
	\begin{align*}
& \sum_{t=0}^{T-1}\E\left[\frac{1}{n}\sum_{\z_i\in\D}  \inner{\pi_{i,1}^t\pi_{i,2}^{t+1}}{\w^{t+1} - \w}\right] 
		=  \sum_{t=0}^{T-1}\E\left[ \inner{\frac{1}{n}\sum_{\z_i\in\D} \pi_{i,1}^t\pi_{i,2}^{t+1} - \frac{1}{B_1}\sum_{i\in \B_1^t}\pi_{i,1}^t\pi_{i,2}^{t+1}(\B_{i,2}^t)}{\w^{t+1} - \w}\right] \\
		& \quad\quad\quad\quad + \sum_{t=0}^{T-1}\E\left[ \inner{ \frac{1}{B_1}\sum_{i\in\B_1^t}\pi_{i,1}^t\pi_{i,2}^{t+1}(\B_{i,2}^t)}{\w^{t+1} - \w}\right] \\
		& = \sum_{t=0}^{T-1}\E\left[ \inner{\frac{1}{n}\sum_{\z_i\in\D} \pi_{i,1}^t\pi_{i,2}^{t+1} - \frac{1}{B_1}\sum_{i\in\B_1^t}\pi_{i,1}^t\pi_{i,2}^{t+1}(\B_{i,2}^t)}{\w^{t+1} - \w^t}\right] \\
		& \quad\quad\quad\quad + \sum_{t=0}^{T-1}\E\left[ \inner{ \frac{1}{B_1}\sum_{i\in\B_1^t}\pi_{i,1}^t\pi_{i,2}^{t+1}(\B_{i,2}^t)}{\w^{t+1} - \w}\right].
	\end{align*}
	The last equality above uses $\E\left[ \inner{\frac{1}{n}\sum_{\z_i\in\D} \pi_{i,1}^t\pi_{i,2}^{t+1} - \frac{1}{B_1}\sum_{i\in\B_1^t}\pi_{i,1}^t\pi_{i,2}^{t+1}(\B_{i,2}^t)}{\w^t - \w}\right] = 0$. Moreover,
	\begin{align}\label{eq:Q0_2nd}
		\E\left[ \inner{\frac{1}{n}\sum_{\z_i\in\D} \pi_{i,1}^t\pi_{i,2}^{t+1} - \frac{1}{B_1}\sum_{i\in\B_1^t}\pi_{i,1}^t\pi_{i,2}^{t+1}(\B_{i,2}^t)}{\w^{t+1} - \w^t}\right] \leq \frac{ C_f^2(\zeta^2+C_g^2)}{\eta \min\{B_1,B_2\}} +  \frac{\eta}{4}\Norm{\w^{t+1}-\w^t}^2.
	\end{align}
	According to the three-point inequality (Lemma 1 in \citealt{zhang2020optimal}), we have 
	\begin{align}\label{eq:Q0_3rd}
		\sum_{t=0}^{T-1}\E\left[ \inner{ \frac{1}{B_1}\sum_{i\in\B_1^t}\pi_{i,1}^t\pi_{i,2}^{t+1}(\B_{i,2}^t)}{\w^{t+1} - \w}\right] \leq \frac{\eta}{2}\Norm{\w^t-\w}^2 - \frac{\eta}{2}\Norm{\w^{t+1}-\w}^2 - \frac{\eta}{2}\Norm{\w^t-\w^{t+1}}^2.
	\end{align}
	Besides, the third term on the R.H.S. of \eqref{eq:Q0_decomp} can be upper bounded as follows based on the Young's inequality with a constant $\rho>0$.
	\begin{align*}
		& \sum_{t=0}^{T-1}\E\left[\frac{1}{n}\sum_{\z_i\in\D} \inner{(\pi_{i,1}^{t+1} - \pi_{i,1}^t)\pi_{i,2}^{t+1}}{\w^{t+1} - \w}\right] \leq \sum_{t=0}^{T-1}\E\left[\frac{C_g^2\rho}{2n}\sum_{\z_i\in\D} \Norm{\pi_{i,1}^{t+1} - \pi_{i,1}^t}^2 + \frac{\Norm{\w^{t+1}-\w}^2}{2\rho}\right]\\
		& \leq \sum_{t=0}^{T-1}\E\left[ \frac{L_f C_g^2\rho}{n} \bD_{f^*}(\pi_1^t,\pi_1^{t+1})+ \frac{\Norm{\w^{t+1}-\w}^2}{2\rho}\right],
	\end{align*}
	where the last inequality is due to $L_f$-smoothness of $f$. Choose $\rho = \frac{n\tau}{2B_1 L_f C_g^2}$. Besides, we also have $\Norm{\w^{t+1}-\w}^2\leq C_\Omega^2$. 
	\begin{align}\label{eq:Q0_4th}
		\sum_{t=0}^{T-1}\E\left[\frac{1}{n}\sum_{\z_i\in\D} \inner{(\pi_{i,1}^{t+1} - \pi_{i,1}^t)\pi_{i,2}^{t+1}}{\w^{t+1} - \w}\right] \leq \frac{\tau}{2B_1}\E\left[\sum_{t=0}^{T-1}\bD_{f^*}(\pi_1^t,\pi_1^{t+1})\right] + \frac{B_1 L_f C_g^2 C_\Omega^2 T}{n\tau}.
	\end{align}
	Plugging \eqref{eq:Q0_2nd},  \eqref{eq:Q0_3rd}, and \eqref{eq:Q0_4th} into \eqref{eq:Q0_decomp} leads to
	\begin{align*}
		\sum_{t=0}^{T-1}\E\left[\frac{1}{n}\sum_{\z_i\in\D} Q_{i,0}(\y^t,\y)\right] & \leq \frac{T C_f^2 C_g^2}{\eta} + \frac{T C_f^2 (\zeta^2+C_g^2)}{\eta\min\{B_1,B_2\}} + \frac{C_\Omega^2}{2\eta} \\
		& \quad\quad + \frac{\tau}{2B_1}\E\left[\sum_{t=0}^{T-1}\bD_{f^*}(\pi_1^t,\pi_1^{t+1})\right] + \frac{B_1 L_f C_g^2 C_\Omega^2 T}{n\tau}.
	\end{align*}
\end{proof}

\subsection{Proof of Theorem~\ref{thm:cvx_sox-simp}}
\begin{proof}
	Based on Lemma~\ref{lem:bound_Q2}, Lemma~\ref{lem:bound_Q1}, and Lemma~\ref{lem:bound_Q0}, we have
	\begin{align*}
		& \E\left[\frac{1}{T}\sum_{t=0}Q(\y^t,\y)\right] \\
		&\leq \frac{2n(C_fC_gC_\Omega+C_{f^*})}{B_1 T} + \frac{n C}{\eta B_1} + \frac{\tau n L_f C_g^2 C_\Omega^2}{B_1 T} + \frac{L_f\sigma^2}{\tau B_2} + \frac{B_1}{n\tau B_2} +  \frac{C_f^2 C_g^2}{\eta} + \frac{C_f^2(\zeta^2+C_g^2)}{\eta \min\{B_1,B_2\}} \\
		&\quad\quad + \frac{\eta C_\Omega^2}{2 T} + \frac{B _1 L_f C_g^2 C_\Omega^2}{n\tau},
	\end{align*} 
	where $F(\bar{\w}^T) = \L(\bar{\w}^T,\pi_1,\pi_2)$, $\bar{\w}^T = \frac{1}{T}\sum_{t=0}^{T-1}\w^t$ and $\L_{i,1}(\w,\pi_{i,1}^{t+1},\pi_{i,2}^{t+1})\leq f(g_i(\w))$ such that we have $\frac{1}{n}\sum_{\z_i\in\D} \L_{i,1}(\w,\pi_{i,1}^{t+1},\pi_{i,2}^{t+1})\leq \frac{1}{n}\sum_{\z_i\in\D} f(g_i(\w)) = F(\w)$. 
	\begin{align*}
		\E\left[\frac{1}{T}\sum_{t=0}Q(\y^t,\y)\right]  & = \E\left[\frac{1}{T}\sum_{t=0}(\L(\w^t,\pi_1,\pi_2) - \L(\w,\pi_1^{t+1},\pi_2^{t+1}))\right].  
	\end{align*}
Thus, to ensure that $\max_{\y} \E\left[\frac{1}{T}\sum_{t=0}Q(\y^t,\y)\right]\leq \epsilon$, we can make
	\begin{align*}
		& \eta = \max\left\{\frac{9nC}{B_1\epsilon}, \frac{9C_f^2C_g^2}{\epsilon}, \frac{9C_f^2(\zeta^2+C_g^2)}{\min\{B_1,B_2\}\epsilon}\right\},\quad \tau = \max\left\{\frac{9L_f \sigma^2}{B_2\epsilon}, \frac{9B_1}{n B_2\epsilon}, \frac{9B_1 L_f C_g^2 C_\Omega^2}{n\epsilon}\right\},\\
		&  \resizebox{\hsize}{!}{$T = \max\left\{\frac{18n(C_fC_gC_\Omega+C_{f^*})}{B_1 \epsilon}, \frac{81nL_f^2C_g^2C_\Omega^2\sigma^2}{B_1B_2\epsilon^2}, \frac{81L_f C_g^2C_\Omega^2}{B_2\epsilon^2}, \frac{81L_f^2 C_g^4C_\Omega^4}{\epsilon^2}, \frac{81 nC C_\Omega^2}{2B_1\epsilon^2}, \frac{81C_f^2C_g^2C_\Omega^2}{2\epsilon^2}, \frac{81C_f^2C_\Omega^2(\zeta^2+C_g^2)}{2\min\{B_1,B_2\}\epsilon^2}\right\}$}.
	\end{align*}
\end{proof}

\section{The Challenge of Deriving Strong Duality Gap}\label{sec:mistake}

In \eqref{eq:exp_Bt} of Lemma~\ref{lem:bound_Q1}, we used the equation for any $\pi_{i,1}$
\begin{align*}
 \boxed{\E\left[\sum_{\z_i\notin \B_1^t} f^*(\pi_{i,1})\right] = \left(1-\frac{B_1}{n}\right)\E\left[\sum_{\z_i\in \D}f^*(\pi_{i,1})\right],} 
\end{align*}
which meant to take expectation on the sampled mini-batch $\B_1^t$. To derive the upper bound of duality gap $\E[\max_{\w,\pi_1,\pi_2}(\L(\bar{\w}^T,\pi_1,\pi_2) - \L(\w, \bar{\pi}_1^T,\bar{\pi}_2^T))]$, we plugged in $\pi_{i,1} = \bar{\pi}_{i,1}^*$, $\bar{\pi}_{i,1}^* \coloneqq \arg\max_{\pi_{i,1}}\inner{\pi_{i,1}}{\L_{i,2}(\bar{\w}^T,\bar{\pi}_{i,2}^*)} - f^*(\pi_{i,1})$. However, $\bar{\pi}_{i,1}^*$ is not a valid value to plug in because $\bar{\pi}_{i,1}^*$ also depends on $\B_1^t$. Thus, the conclusion in Lemma 10 and Theorem 3 of our previous version are weakened. To be specific, we can only derive an upper bound for $\max_{\w,\pi_1,\pi_2}\E[\L(\bar{\w}^T,\pi_1,\pi_2) - \L(\w, \bar{\pi}_1^T,\bar{\pi}_2^T)]$, which is weaker than the duality gap (see Example 1 in \citet{alacaoglu2022complexity}). Unlike the duality gap, the upper bound on $\max_{\w,\pi_1,\pi_2}\E[\L(\bar{\w}^T,\pi_1,\pi_2) - \L(\w, \bar{\pi}_1^T,\bar{\pi}_2^T)]$ does not necessarily implies the bound for primal objective gap $\E[F(\bar{\w}^T) - F(\w^*)]$.

\section{Proof of Theorem~\ref{thm:scvx_sox}}

\begin{lemma}\label{lem:dual_contraction}
 Under Assumptions~\ref{asm:var},~\ref{asm:bounded_dom},~\ref{asm:R}, it is satisfied that 
 \begin{align*}
& \E[Q_1(\z^t,\z)] \leq \left(1 - \frac{B_1}{n}\right)\E[Q_1(\z^{t-1},\z)] +  \frac{C_1}{\eta_{t-1}} + \frac{L_f B_1 \sigma^2}{\tau_t n B_2}\\
& \quad\quad  + \left(\tau_t + 1 - \frac{B_1}{n}\right) \frac{1}{n}\E[\bD(\pi^t, \pi)]-  (\tau_t + 1)  \frac{1}{n}\E\left[\bD(\pi^{t+1}, \pi)\right]  - \colorbox{gray!40}{$\frac{\tau_t}{2}  \frac{1}{n}\E\left[\bD(\pi^t,\pi^{t+1}) \right]$},
 \end{align*}
 where $\z^t \coloneqq (\w^t,\pi^{t+1})$, $\z \coloneqq (\w,\pi)$ (independent of the randomness in the algorithm),  $Q_1(\z^t,\z) \coloneqq \L(\w^t, \pi) - \L(\w^t, \pi^{t+1})$, $\bD(\pi,\pi')\coloneqq \sum_{\z_i\in\D} D_{f_i^*}(\pi_i,\pi_i')$ for any $\pi,\pi'$, $C_1\coloneqq 2C_f C_g (C_f \sqrt{C_g^2+\zeta^2} + C_R)$.
 \end{lemma}
 \begin{proof}
 We define $h(\w,\pi) = -\sum_{\z_i\in\D} \pi_i g_i(\w) + \sum_{\z_i\in\D} f_i^*(\pi_i)$. Due to the update of rule of $\pi_i^t$ ($\pi_i^{t+1}=\pi_i^t$ for $i\notin \B_1^t$), we have
\begin{align}\nonumber
h(\w^t,\pi^{t+1}) & = -\sum_{\z_i\in\D} \pi_i^t g_i(\w^t) - \sum_{\z_i\in\D} (\pi_i^{t+1} - \pi_i^t) g_i(\w) + \sum_{\z_i\in\D} f_i^*(\pi_i^{t+1})\\\nonumber
& = -\sum_{\z_i\in\D} \pi_i^t g_i(\w^t) \underbrace{- \sum_{\z_i\in\D} (\pi_i^{t+1} - \pi_i^t) g_i(\w^t, \B_{i,2}^t) + \sum_{\z_i\in\D} f_i^*(\pi_i^{t+1})}_{\coloneqq \text{\ding{161}}} \\\label{eq:starter}
& \quad\quad + \sum_{i\in\B_1^t}(g_i(\w^t, \B_{i,2}^t) - g_i(\w^t))(\pi_i^{t+1} - \pi_i^t).
\end{align}
We define that $\bar{\pi}_i^{t+1}\coloneqq \arg\max_{\pi_i} \left\{\pi_i g_i(\w^t;\B_{i,2}^t) - f_i^*(\pi_i) - \tau_t D_{f_i^*}(\pi_i^t,\pi_i)\right\}$, $\forall \z_i\in\D$. Note that $\bar{\pi}_i^{t+1} = 
\pi_i^{t+1}$ when $\z_i\in\B_1^t$. Applying Lemma 3.8 in \citet{lan2020first} leads to
\begin{align*}
&  (\pi - \bar{\pi}_i^{t+1})g_i(\w^t;\B_{i,2}^t) +   f_i^*(\pi_i^{t+1}) - f_i^*(\pi_i) \\
& \leq \tau_t D_{f_i^*}(\pi_i^t, \pi_i)-  (\tau_t + 1) D_{f_i^*}(\bar{\pi}_i^{t+1}, \pi_i) - \tau_t D_{f_i^*}(\pi_i^t,\bar{\pi}_i^{t+1}).
\end{align*}
Add $\pi_i^t g_i(\w^t;\B_{i,2}^t)$ on both sides and re-arrange the terms. For $\z_i\in \D$, we have
\begin{align}\label{eq:3p_starter}
&  - (\bar{\pi}_i^{t+1} - \pi_i^t)g_i(\w^t;\B_{i,2}^t) +   f_i^*(\bar{\pi}_i^{t+1})  \\\nonumber
& \leq  - (\pi_i - \pi_i^t)g_i(\w^t;\B_{i,2}^t) + f_i^*(\pi_i)  + \tau_t D_{f_i^*}(\pi_i^t, \pi_i)-  (\tau_t + 1) D_{f_i^*}(\bar{\pi}_i^{t+1}, \pi_i) - \tau_t D_{f_i^*}(\pi_i^t,\bar{\pi}_i^{t+1}).
\end{align}
Since $\bar{\pi}_i^{t+1}$ is independent of $\B_1^t$, taking conditional expectation on $\B_1^t$ leads to
\begin{align*}
& \E[D_{f_i^*}(\pi_i^{t+1},\pi_i)\mid \F_t] = \frac{B_1}{n} \E[D_{f_i^*}(\bar{\pi}_i^{t+1},\pi_i)\mid \F_t] + \left(1 - \frac{B_1}{n}\right)D_{f_i^*}(\pi_i^t,\pi_i),\\
& \E[D_{f_i^*}(\pi_i^t,\pi_i^{t+1})\mid \F_t] = \frac{B_1}{n} \E[D_{f_i^*}(\pi_i^t,\bar{\pi}_i^{t+1})\mid \F_t],\\
& \E[\pi_i^{t+1}\mid \F_t] = \frac{B_1}{n} \E[\bar{\pi}_i^{t+1}\mid \F_t] + \left(1 - \frac{B_1}{n}\right) \pi_i^t,\\
& \E[f_i^*(\pi_i^{t+1})\mid \F_t] = \frac{B_1}{n}\E[ f_i^*(\bar{\pi}_i^{t+1})\mid \F_t] + \left(1 - \frac{B_1}{n}\right) f_i^*(\pi_i^t).
\end{align*}
We define $\bD(\pi,\pi')\coloneqq \sum_{\z_i\in\D} D_{f_i^*}(\pi_i,\pi_i')$. By multiply both sides of \eqref{eq:3p_starter} by $\frac{B_1}{n}$ and plug the above in, we can bound $\E[\text{\ding{161}}\mid \F_t]$ by  
\begin{align*}
& \E[\text{\ding{161}}\mid \F_t] = - \sum_{\z_i\in\D}\E\left[\left(\pi_i^{t+1} - \pi_i^t\right)g_i(\w^t;\B_{i,2}^t)\mid \F_t\right] +   \sum_{\z_i\in\D}\E\left[f_i^*(\pi_i^{t+1})\mid \F_t\right]  \\
& \leq  \left(1 - \frac{B_1}{n}\right)\sum_{\z_i\in\D} f_i^*(\pi_i^t) -  \frac{B_1}{n}\sum_{\z_i\in\D} \E[(\pi_i - \pi_i^t)g_i(\w^t;\B_{i,2}^t)\mid \F_t] + \frac{B_1}{n} \sum_{\z_i\in\D} f_i^*(\pi_i)  \\
& \quad\quad + \left(\tau_t + 1 - \frac{B_1}{n}\right) \bD(\pi^t, \pi)-  (\tau_t + 1) \E\left[\bD(\pi^{t+1}, \pi)\mid \F_t\right] - \tau_t \E\left[\bD(\pi^t,\pi^{t+1})\mid \F_t\right].
\end{align*}
Combine \eqref{eq:starter} and the equation above.
\begin{align*}
& \E\left[h(\w^t,\pi^{t+1})\mid \F_t\right] \\
& = -\sum_{\z_i\in\D} \pi_i^t g_i(\w^t) + \left(1 - \frac{B_1}{n}\right) \sum_{\z_i\in\D} f_i^*(\pi_i^t) - \frac{B_1}{n}\sum_{\z_i\in\D}(\pi_i - \pi_i^t)g_i(\w^t) + \frac{B_1}{n} \sum_{\z_i\in\D} f_i^*(\pi_i)\\
& \quad\quad  + \left(\tau_t + 1 - \frac{B_1}{n}\right)  \bD(\pi^t, \pi)-  (\tau_t + 1) \E\left[ \bD(\pi^{t+1}, \pi)\mid \F_t\right] - \tau_t \E\left[ \bD(\pi^t,\pi^{t+1})\mid \F_t\right] \\
& \quad\quad + \underbrace{\E\left[\sum_{\z_i\in\B_1^t}(g_i(\w^t, \B_{i,2}^t) - g_i(\w^t))(\pi_i^{t+1} - \pi_i^t)\mid \F_t\right]}_{\circledast}.
\end{align*}
Based on the $\frac{1}{L_f}$-strong convexity of $f_i^*(\pi_i)$, the $ \circledast$ term for $\z_i\in\B_1^t$ can be bounded as
\begin{align*}
 \circledast & \leq \frac{L_f}{\tau_t}\E\left[\sum_{\z_i\in\B_1^t}\Norm{g_i(\w^t;\B_{i,2}^t) - g_i(\w^t)}^2\mid \F_t\right] + \frac{\tau_t}{4L_f}\E\left[\sum_{\z_i\in\B_1^t}\Norm{\pi_i^{t+1} - \pi_i^t}^2\mid \F_t\right]\\
& \leq \frac{L_f B_1 \sigma^2}{\tau_t B_2} + \frac{\tau_t}{2} \E\left[\sum_{\z_i\in\B_1^t}D_{f_i^*}(\pi_i^t, \pi_i^{t+1})\mid \F_t\right] .
\end{align*}
Taking the fact $\sum_{\z_i\in\B_1^t}D_{f_i^*}(\pi_i^t, \pi_i^{t+1}) = \bD(\pi^t,\pi^{t+1})$ (due to the update formula of the dual variable) and the upper bound of $ \circledast$ into consideration leads to
\begin{align}\label{eq:intermediate}
& \E\left[h(\w^t,\pi^{t+1})\mid \F_t\right] \\\nonumber
& = -\sum_{\z_i\in\D} \pi_i^t g_i(\w^t) + \left(1 - \frac{B_1}{n}\right) \sum_{\z_i\in\D} f_i^*(\pi_i^t) - \frac{B_1}{n}\sum_{\z_i\in\D}(\pi_i - \pi_i^t)g_i(\w^t) + \frac{B_1}{n} \sum_{\z_i\in\D} f_i^*(\pi_i) +\frac{L_f B_1 \sigma^2}{\tau_t B_2}\\\nonumber
& \quad\quad  + \left(\tau_t + 1 - \frac{B_1}{n}\right)  \bD(\pi^t, \pi)-  (\tau_t + 1) \E\left[ \bD(\pi^{t+1}, \pi)\mid \F_t\right] - \frac{\tau_t}{2} \E\left[ \bD(\pi^t,\pi^{t+1})\mid \F_t\right].
\end{align}
Note that $\left(1 - \frac{B_1}{n}\right)(-\sum_{\z_i\in\D}\pi_i^t g_i(\w^t) + \sum_{\z_i\in\D} f_i^*(\pi_i^t)) + \frac{B_1}{n}\left(-\sum_{\z_i\in\D} g_i(\w^t) \pi_i +  \sum_{\z_i\in\D} f_i^*(\pi_i)\right) - h(\w^t,\pi) = \left(1 - \frac{B_1}{n}\right)(h(\w^t,\pi^t) - h(\w^t,\pi))$. Then, subtract $h(\w^t,\pi)$ from both sides of \eqref{eq:intermediate} and use the tower property of conditional expectation. 
\begin{align*}
& \E\left[h(\w^t,\pi^{t+1}) - h(\w^t,\pi_i)\right] \\
& \leq \left(1 - \frac{B_1}{n}\right)\E\left[h(\w^{t-1},\pi^t) - h(\w^{t-1},\pi)\right] + \underbrace{\E[\sum_{\z_i\in\D} (\pi_i-\pi_i^t)(g_i(\w^t) - g_i(\w^{t-1}))]}_{\heartsuit}\\
& \quad\quad  + \left(\tau_t + 1 - \frac{B_1}{n}\right) \E[\bD(\pi^t, \pi)]-  (\tau_t + 1)  \E\left[\bD(\pi^{t+1}, \pi)\right]  - \frac{\tau_t}{2}  \E\left[\bD(\pi^t,\pi^{t+1}) \right] + \frac{L_f B_1 \sigma^2}{\tau_t B_2}.
\end{align*}
Note that $\w^t = \arg\min_{\x\in\X} \left\{\frac{1}{B_1}\sum_{i\in \B_1^{t-1}} \pi_i^{t-1} \nabla g_i(\w^{t-1};\B_{i,2}^{t-1})\cdot \x + R(\w) + \frac{\eta_{t-1}}{2}\Norm{\w-\w^{t-1}}_2^2\right\}$. The optimality condition implies that
\begin{align*}
\inner{\frac{1}{B_1}\sum_{i\in \B_1^{t-1}} \pi_i^{t-1} \nabla g_i(\w^{t-1};\B_{i,2}^{t-1}) +  \nabla R(\w^t) + \eta_{t-1}(\w^t - \w^{t-1})}{\w^t - \w}\leq 0,\quad\forall \w\in\Omega.
\end{align*}
Plug in $\w = \w^{t-1}$ and re-arrange the terms.
\begin{align*}
\eta_{t-1} \Norm{\w^t - \w^{t-1}}_2^2 & \leq \inner{\frac{1}{B_1}\sum_{i\in \B_1^{t-1}} \pi_i^{t-1} \nabla g_i(\w^{t-1};\B_{i,2}^{t-1}) +  \nabla R(\w^t)}{\w^{t-1} - \w^t}\\
& \leq \Norm{\frac{1}{B_1}\sum_{i\in \B_1^{t-1}} \pi_i^{t-1} \nabla g_i(\w^{t-1};\B_{i,2}^{t-1}) +  \nabla R(\w^t)}_2\Norm{\w^{t-1} - \w^t}_2.
\end{align*}
Then, we have
\begin{align*}
\E[\Norm{\w^t - \w^{t-1}}_2\mid \F_{t-1}] & \leq \frac{1}{\eta_{t-1} }\E\left[\Norm{\frac{1}{B_1}\sum_{i\in \B_1^{t-1}} \pi_i^{t-1} \nabla g_i(\w^{t-1};\B_{i,2}^{t-1}) +  \nabla R(\w^t)}_2 \mid \F_{t-1}\right]\\
& \leq \frac{1}{\eta_{t-1} }(C_f \sqrt{C_g^2+\zeta^2} + C_R)
\end{align*}
Then, we can bound the $\heartsuit$ term as
\begin{align*}
\heartsuit & \leq \sum_{\z_i\in\D}\E[ (|\pi_i|+|\pi_i^t|)|g_i(\w^t) - g_i(\w^{t-1})|] \leq 2 n C_f C_g \E[\Norm{\w^t - \w^{t-1}}_2] \\
& \leq \frac{2n C_f C_g (C_f \sqrt{C_g^2+\zeta^2} + C_R)}{\eta_{t-1}}.
\end{align*}
Define $C_1\coloneqq 2C_f C_g (C_f \sqrt{C_g^2+\zeta^2} + C_R)$, $\z^t \coloneqq (\w^t,\pi^{t+1})$, $\z \coloneqq (\w,\pi)$, and $Q_1(\z^t,\z) \coloneqq \L(\w^t, \pi) - \L(\w^t, \pi^{t+1}) =  \frac{1}{n}\sum_{\z_i\in\D} (\pi_i  g_i(\w^t) -  f_i^*(\pi_i)) - \frac{1}{n}\sum_{\z_i\in\D} (\pi_i^{t+1} g_i(\w^t)- f_i^*(\pi_i^{t+1})) =  \frac{1}{n}(h(\w^t,\pi^{t+1})-h(\w^t,\pi))$.
\begin{align*}
& \E[Q_1(\z^t,\z)] \leq \left(1 - \frac{B_1}{n}\right)\E[Q_1(\z^{t-1},\z)] +  \frac{C_1}{\eta_{t-1}} + \frac{L_f B_1 \sigma^2}{\tau_t n B_2}\\
& \quad\quad  + \left(\tau_t + 1 - \frac{B_1}{n}\right) \frac{1}{n}\E[\bD(\pi^t, \pi)]-  (\tau_t + 1)  \frac{1}{n}\E\left[\bD(\pi^{t+1}, \pi)\right]  - \frac{\tau_t}{2}  \frac{1}{n}\E\left[\bD(\pi^t,\pi^{t+1}) \right].
\end{align*}
\end{proof}

\begin{lemma}\label{lem:primal_contraction}
 Under Assumptions~\ref{asm:var},~\ref{asm:bounded_dom},~\ref{asm:R}, it is satisfied that 
 \begin{align*}
\E[Q_0(\z^t,\z^*)] &\leq \frac{\eta_t}{2}\E[\Norm{\w^t-\w^*}_2^2] - \frac{\eta_t + \mu}{2}\E[\Norm{\w^{t+1}-\w^*}_2^2] - \left(\frac{\eta_t}{8} - \frac{L_R}{2}\right)\E[\Norm{\w^t-\w^{t+1}}_2^2]\\
& \quad\quad + \frac{2 C_f^2 (C_g^2 + \zeta^2)}{\eta_t \min\{B_1,B_2\}}  + \frac{L_f C_g^2 \rho_t}{n}\E[\bD(\pi^t,\pi^{t+1})] + \frac{C_\Omega^2}{2\rho_t} + \frac{2(C_f^2 C_g^2+C_R^2)}{\eta_t},
 \end{align*}
 where $Q_0(\z^t,\z^*)\coloneqq \L(\w^t,\pi^{t+1}) - \L(\w^*,\pi^{t+1})$, $\rho_t > 0$.
\end{lemma} 
\begin{proof}
Applying Lemma 3.8 in \citet{lan2020first} leads to 
\begin{align*}
& \inner{\frac{1}{B_1}\sum_{i\in\B_1^t}\pi_i^t \nabla g_i(\w^t;\B_{i,2}^t)}{\w^{t+1} - \w^*} + R(\w^{t+1}) - R(\w^*) \\
& \leq \frac{\eta_t}{2}\Norm{\w^t-\w^*}_2^2 - \frac{\eta_t + \mu}{2}\Norm{\w^{t+1} - \w^*}_2^2 - \frac{\eta_t}{2}\Norm{\w^t - \w^{t+1}}_2^2.
\end{align*}
Define $Q_0(\z^t,\z^*)\coloneqq \L(\w^t,\pi^{t+1}) - \L(\w^*,\pi^{t+1}) = \frac{1}{n}\sum_{i=1}^n \pi_i^{t+1} (g_i(\w^t) -g_i(\w^*))+ R(\w^t) - R(\w^*)$. By the convexity $g_i(\w^*) - g(\w^t) \geq \inner{\nabla g_i(\w^t)}{\w^* -\w^t}$ and the monotonicity of $f_i$ (i.e. $\pi_i^t\geq 0$) (or otherwise the linearity of $g_i$), we have \begin{align*}
\frac{1}{n}\sum_{i=1}^n \pi_i^{t+1} (g_i(\w^t) -g_i(\w^*)) \leq  \inner{\frac{1}{n}\sum_{i=1}^n \pi_i^{t+1} \nabla g_i(\w^t)}{\w^t-\w^*}.
\end{align*}
Then, $Q_0(\z^t,\z^*)$ can be bounded as
\begin{align}\nonumber
& Q_0(\z^t,\z^*) \leq \inner{\frac{1}{n}\sum_{i=1}^n \pi_i^{t+1} \nabla g_i(\w^t)}{\w^t-\w^*} + R(\w^t) - R(\w^*)\\\nonumber
& = \inner{\frac{1}{n}\sum_{i=1}^n \pi_i^{t+1} \nabla g_i(\w^t)}{\w^{t+1}-\w^*} +  \inner{\frac{1}{n}\sum_{i=1}^n \pi_i^{t+1} \nabla g_i(\w^t)}{\w^t-\w^{t+1}} + R(\w^t) - R(\w^*)\\\nonumber
& = \underbrace{\inner{\frac{1}{n}\sum_{i=1}^n \pi_i^t \nabla g_i(\w^t)}{\w^{t+1}-\w^*}  + R(\w^{t+1}) - R(\w^*)}_{\text{\FiveStarOpen}}  + \underbrace{\inner{\frac{1}{n}\sum_{i=1}^n (\pi_i^{t+1} - \pi_i^t) \nabla g_i(\w^t)}{\w^{t+1}-\w^*}}_{\text{\Asterisk}} \\\label{eq:Q0}
& \quad\quad +  \underbrace{\inner{\frac{1}{n}\sum_{i=1}^n \pi_i^{t+1} \nabla g_i(\w^t)}{\w^t-\w^{t+1}}}_{\clubsuit} + \underbrace{R(\w^t) - R(\w^{t+1})}_{\spadesuit}.
\end{align}
The $\text{\FiveStarOpen}$ term can be handled as
\begin{align*}
\text{\FiveStarOpen} & = \inner{\frac{1}{B_1}\sum_{i\in\B_1^t} \pi_i^t \nabla g_i(\w^t;\B_{i,2}^t)}{\w^{t+1}-\w^*}  + R(\w^{t+1}) - R(\w^*)\\
& \quad\quad + \inner{\frac{1}{n}\sum_{i=1}^n \pi_i^t \nabla g_i(\w^t) - \frac{1}{B_1}\sum_{i\in\B_1^t} \pi_i^t \nabla g_i(\w^t;\B_{i,2}^t)}{\w^{t+1} -\w^t} \\
& \quad\quad + \inner{\frac{1}{n}\sum_{i=1}^n \pi_i^t \nabla g_i(\w^t) - \frac{1}{B_1}\sum_{i\in\B_1^t} \pi_i^t \nabla g_i(\w^t;\B_{i,2}^t)}{\w^t - \w^*}.
\end{align*}
Note that $\E[\inner{\frac{1}{n}\sum_{i=1}^n \pi_i^t \nabla g_i(\w^t) - \frac{1}{B_1}\sum_{i\in\B_1^t} \pi_i^t \nabla g_i(\w^t;\B_{i,2}^t)}{\w^t - \w^*}\mid \F_t]=0$ and
\begin{align*}
& \E\left[\inner{\frac{1}{n}\sum_{i=1}^n \pi_i^t \nabla g_i(\w^t) - \frac{1}{B_1}\sum_{i\in\B_1^t} \pi_i^t \nabla g_i(\w^t;\B_{i,2}^t)}{\w^{t+1} -\w^t}\mid \F_t\right]\\
& \leq \frac{1}{\eta_t}\E\left[\Norm{\frac{1}{n}\sum_{i=1}^n \pi_i^t \nabla g_i(\w^t) - \frac{1}{B_1}\sum_{i\in\B_1^t} \pi_i^t \nabla g_i(\w^t;\B_{i,2}^t)}_2^2\mid \F_t\right] + \frac{\eta_t}{4}\E\left[\Norm{\w^t-\w^{t+1}}_2^2\mid \F_t\right]\\
& \leq \frac{2 C_f^2 (C_g^2 + \zeta^2)}{\eta_t \min\{B_1,B_2\}} + \frac{\eta_t}{8}\E\left[\Norm{\w^t-\w^{t+1}}_2^2\mid \F_t\right].
\end{align*}
Based on the Young's inequality with a constant $\rho_t>0$, the $\text{\Asterisk}$ term can be upper bounded as
\begin{align*}
& \inner{\frac{1}{n}\sum_{i=1}^n (\pi_i^{t+1} - \pi_i^t) \nabla g_i(\w^t)}{\w^{t+1}-\w^*}  \leq \frac{1}{n}\sum_{i=1}^n C_g\Norm{\pi_i^{t+1} - \pi_i^t}_2\Norm{\w^{t+1}-\w^*}_2 \\
& \leq \frac{C_g^2 \rho_t}{2n} \sum_{i=1}^n \Norm{\pi_i^{t+1} - \pi_i^t}_2^2 + \frac{\Norm{\w^{t+1} -\w^*}_2^2}{2\rho_t} \leq \frac{L_f C_g^2 \rho_t}{n}\bD(\y^t,\pi^{t+1}) + \frac{C_\Omega^2}{2\rho_t}.
\end{align*}
We bound the $\clubsuit$ and $\spadesuit$ terms by
\begin{align*}
& \spadesuit \leq \frac{2C_f^2 C_g^2 }{\eta_t}+ \frac{\eta_t}{8}\Norm{\w^t - \w^{t+1}}_2^2,\\
& \spadesuit\leq \inner{\nabla R(\w^{t+1})}{\w^t - \w^{t+1}} + \frac{L_R}{2}\Norm{\w^t - \w^{t+1}}_2^2 \leq \frac{2 C_R^2}{\eta_t} + \left(\frac{\eta_t}{8} + \frac{L_R}{2}\right)\Norm{\w^t - \w^{t+1}}_2^2.
\end{align*}
Plug the upper bounds of $\text{\FiveStarOpen}$, \text{\FiveStarOpen}, $\clubsuit$, $\spadesuit$ into \eqref{eq:Q0} and use the tower property of conditional expectation.
\begin{align*}
\E[Q_0(\z^t,\z^*)] &\leq \frac{\eta_t}{2}\E[\Norm{\w^t-\w^*}_2^2] - \frac{\eta_t + \mu}{2}\E[\Norm{\w^{t+1}-\w^*}_2^2] - \left(\frac{\eta_t}{8} - \frac{L_R}{2}\right)\E[\Norm{\w^t-\w^{t+1}}_2^2]\\
& \quad\quad + \frac{2 C_f^2 (C_g^2 + \zeta^2)}{\eta_t \min\{B_1,B_2\}}  + \frac{L_f C_g^2 \rho_t}{n}\E[\bD(\y^t,\pi^{t+1})] + \frac{C_\Omega^2}{2\rho_t} + \frac{2(C_f^2 C_g^2+C_R^2)}{\eta_t}.
\end{align*}
\end{proof}

Next, we are ready to present the proof of Theorem~\ref{thm:scvx_sox}.
\begin{proof}
We define $C_2\coloneqq \frac{2 C_f^2 (C_g^2 + \zeta^2)}{\min\{B_1,B_2\}} + 2(C_f^2 C_g^2+C_R^2)$ and set $\eta_t \geq 4L_R$. By Lemma~\ref{lem:primal_contraction} and Lemma~\ref{lem:dual_contraction}, we have
\begin{align*}
\E[Q_0(\z^t,\z^*)] &\leq \frac{\eta_t}{2}\E[\Norm{\w^t-\w^*}_2^2] - \frac{\eta_t + \mu}{2}\E[\Norm{\w^{t+1}-\w^*}_2^2]+ \frac{C_2}{\eta_t}  + \frac{L_f C_g^2 \rho_t}{n}\E[\bD(\pi^t,\pi^{t+1})] + \frac{C_\Omega^2}{2\rho_t},\\
\E[Q_1(\z^{t-1},\z^*)] & \leq \frac{n\E[Q_1(\z^{t-1},\z^*) - Q_1(\z^t,\z^*)]}{B_1} + \frac{n C_1}{\eta_{t-1} B_1} + \frac{L_f  \sigma^2}{\tau_t B_2} - \frac{\tau_t}{2 B_1} \E\left[\bD(\pi^t,\pi^{t+1}) \right]\\
& \quad\quad + \left(\tau_t + 1 - \frac{B_1}{n}\right) \frac{1}{B_1}\E[\bD(\pi^t, \hat{\pi}^*)]-  (\tau_t + 1)  \frac{1}{B_1}\E\left[\bD(\pi^{t+1}, \hat{\pi}^*)\right]. 
\end{align*}
Sum the first equation from $0$ to $T-1$ and the second equation from $1$ to $T$. Consider that $Q(\z^t,\z^*) = Q_1(\z^t,\z^*) + Q_0(\z^t,\z^*) = \L(\w^t,\hat{\pi}^*)- \L(\w^*,\pi^{t+1}) \geq 0$ and $\E[\bD(\pi,\pi')]\geq 0$.
\begin{align*}
& \sum_{t=0}^{T-1} \E[Q(\z^t,\z^*)] + \frac{\eta_{T-1}+\mu}{2}\E[\Norm{\w^T - \w^*}_2^2] + (\tau_T + 1)\frac{1}{B_1}\E[\bD(\pi^{T+1},\hat{\pi}^*)]\\
& \leq \frac{\eta_0}{2}\E[\Norm{\w^0-\w^*}_2^2] + \sum_{t=1}^{T-1}\frac{(\eta_t - (\eta_{t-1} + \mu))}{2} \E[\Norm{\w^t-\w^*}_2^2] + C_2\sum_{t=0}^{T-1}\frac{1}{\eta_t} + \frac{L_f C_g^2\rho_0 }{n}\E[\bD(\pi^0,\pi^1)]\\
& \quad\quad + \frac{C_\Omega^2}{2}\sum_{t=0}^{T-1}\frac{1}{\rho_t} + \frac{n\E[Q_1(\z^0,\z^*) - Q_1(\z^T,\z^*)]}{B_1} + \frac{n C_1}{B_1}\sum_{t=1}^T \frac{1}{\eta_{t-1}} + \frac{L_f\sigma^2}{B_2}\sum_{t=1}^T \frac{1}{\tau_t} \\
& \quad\quad +\sum_{t=1}^{T-1} \left(\frac{L_f C_g^2 \rho_t}{n} - \frac{\tau_t}{2B_1}\right) \E[\bD(\pi^t,\pi^{t+1})] + \left(\tau_1 + 1 - \frac{B_1}{n}\right) \frac{1}{B_1}\E[\bD(\pi^1,\hat{\pi}^*)] \\
& \quad\quad + \frac{1}{B_1}\sum_{t=2}^T \left(\tau_t-\frac{B_1}{n} - \tau_{t-1} \right) \E[\bD(\pi^t,\hat{\pi}^*)].
\end{align*}
We choose $\rho_t = \frac{n}{2B_1 L_f C_g^2} \tau_t$, $\eta_t = (t+1)\mu$, $\tau_t = \frac{B_1}{n}(t+1)$. 
\begin{align*}
& \sum_{t=0}^{T-1} \E[Q(\z^t,\z^*)] + \frac{(T+1)\mu}{2}\E[\Norm{\w^T - \w^*}_2^2] + (T+1) \frac{1}{n}\E[\bD(\pi^{T+1},\hat{\pi}^*)]\\
& \leq \frac{\mu}{2}\E[\Norm{\w^0-\w^*}_2^2] + \frac{L_f C_g^2 }{2n L_f C_g^2}\E[\bD(\pi^0,\pi^1)] + \left(1 + \frac{B_1}{n}\right) \frac{1}{B_1}\E[\bD(\pi^1,\hat{\pi}^*)] \\
& \quad\quad  + \left(C_2 + \frac{n C_1}{B_1}\right)\sum_{t=0}^{T-1}\frac{1}{\mu(t+1)} + C_\Omega^2  L_f C_g^2\sum_{t=0}^{T-1}\frac{1}{t+1} + \frac{n L_f\sigma^2}{B_1 B_2} \sum_{t=1}^T \frac{1}{t+1} \\
& \quad\quad + \frac{n\E[Q_1(\z^0,\z^*) - Q_1(\z^T,\z^*)]}{B_1}.
\end{align*}
If we set $u_i^0 = g_i(\w^t;\B_{i,2}^0)$ and $\pi_i^0 = \nabla f_i(u_i^0)$, we have $\pi^1 = \pi^0$. Define $G_0 \coloneqq \frac{1}{n}\sum_{i=1}^n g_i(\w^0)$ and $C_3\coloneqq 2(C_f C_g C_\Omega + C_f G_0 + C_{f^*}) $. Note that
\begin{align*}
& \frac{n (Q_1(\z^0,\z^*) - Q_1(\z^T,\z^*))}{B_1} \\
& = - \frac{1}{B_1}\sum_{i=1}^n\left((\pi_i^1-\hat{\pi}_i^*)g_i(\w^0) + (\hat{\pi}_i^* - \pi_i^{T+1})g_i(\w^T) - f_i^*(\pi_i^1)  + f_i^*(\pi_i^{T+1})\right) \leq \frac{n C_3}{B_1},\\
& D_{f_i^*}(\pi_i^0,\hat{\pi}_i^*) = D_{f_i^*}(\nabla f_i(u_i^0), \nabla f_i(g_i(\w^*))) = D_{f_i}(u_i^0, g_i(\w^*)) \leq \frac{L_f}{2}\Norm{u_i^0 - g_i(\w^*)}^2.
\end{align*}
Thus, $\E[\bD(\pi^0,\pi^*)]= \frac{L_f}{2}\sum_{i=1}^n\E\left[\Norm{g_i(\w^T;\B_{i,2}^0) - g_i(\w^*)}^2\right] \leq \frac{n C_4}{2}$, where we define $C_4\coloneqq L_f(\sigma^2/B_2 + C_g^2 C_\Omega^2)$. We have 
\begin{align*}
\frac{1}{T}\sum_{t=0}^{T-1} \E[Q(\z^t,\z^*)] & = \frac{\mu C_\Omega^2}{2 T} + \frac{n C_4}{B_1 T} +  \left(C_2 + \frac{n C_1}{B_1}\right)\frac{\log T}{\mu T} +  C_\Omega^2 L_f C_g^2 \frac{\log T}{T}\\
& \quad\quad + \frac{n L_f\sigma^2}{B_1 B_2}\frac{\log T}{T} + \frac{n C_3}{B_1 T} = \O\left(\frac{n\log T}{B_1\mu T}\right),\\
\E[\Norm{\w^T - \w^*}_2^2]  & \leq \frac{C_\Omega^2}{T+1} + \frac{2 n C_4}{\mu B_1 (T+1)} + 2\left(C_2 + \frac{n C_1}{B_1}\right)\frac{\log T}{\mu^2(T+1)} + 2 C_\Omega^2 L_f C_g^2 \frac{\log T}{\mu (T+1)}\\
& \quad\quad + \frac{2n L_f\sigma^2}{B_1 B_2}\frac{\log T}{\mu (T+1)} + \frac{2 n C_3}{\mu B_1 (T+1)} = \O\left(\frac{n\log T}{B_1\mu^2(T+1)}\right).
\end{align*}
Thus, for $\bar{\w}^T = \frac{1}{T}\sum_{t=0}^{T-1} \w^T$, $\bar{\pi}^T = \frac{1}{T}\sum_{t=0}^{T-1} \pi^{t+1}$ we have
\begin{align*}
F(\bar{\w}^T) - F(\w^*) \leq Q(\bar{\z}^T,\z^*) +  \frac{L_f C_g^2}{2}\Norm{\bar{\w}^T - \w^*}_2^2
\end{align*}
Note that $Q(\bar{\z}^T,\z^*) = \L(\bar{\w}^T,\hat{\pi}^*) - \L(\w^*,\bar{\pi}^T) \leq \frac{1}{T}\sum_{t=0}^{T-1}\left(\L(\w^T,\hat{\pi}^*) - \L(\w^*,\pi^{t+1})\right) = \frac{1}{T}\sum_{t=0}^{T-1} Q(\z^t,\z^*)$ since $\L(\x,\y)$ is convex-concave. Besides, Jensen's inequality implies that 
\begin{align*}
\E[\Norm{\bar{\w}^T - \w^*}_2^2] \leq \frac{1}{T}\sum_{t=0}^{T-1}\E[\Norm{\w^T - \w^*}_2^2] = \frac{1}{T}\sum_{t=0}^{T-1}\O\left(\frac{n \log t}{B_1\mu^2(t+1)}\right) \leq \O\left(\frac{n (\log T)^2}{B_1  \mu^2 T}\right).
\end{align*}
Then, we have that $\E[F(\bar{\w}^T) - F(\w^*) ]\leq \O\left(\frac{n (\log T)^2}{B_1  \mu^2 T}\right)$.
\end{proof}

\section{Extensions for a More general Class of Problems}\label{sec:ext}
In this section, we briefly discuss the extension when $f_i$ is also a stochastic function such that we can only get an unbiased estimate of its gradient, which has an application in MAML. To this end, we assume  a stochastic oracle of $f_i$ that given any $g(\cdot)$  returns $\nabla f_i(g(\cdot); \iota)$ such that $\E[\nabla f_i(g(\cdot); \iota)]= \nabla f_i(g)$, $\E[\|\nabla f_i(g(\cdot); \iota) - \nabla f_i(g(\cdot))\|^2]\leq \chi^2$. We can extend our results for the smooth nonconvex problems by the modifications as follows: First, we need to assume that $\nabla f_i(\cdot; \iota)$ is Lipschitz-continuous; Second, the $\frac{2\beta^2C_f^2(\zeta^2+C_g^2)}{\min\{B_1,B_2\}}$ term in Lemma~\ref{lem:grad_recursion} should be replaced by $\frac{2\beta^2(\chi^2C_g^2+(C_f^2+\chi^2/B_3))(\zeta^2 +  C_g^2)}{\min\{B_1,B_2,B_3\}}$, where $B_3$ is the batch size for sampling $\iota$. Note that Lemma~\ref{lem:fval_recursion} remains the same and Theorem~\ref{thm:sox} does not change (up to a constant factor). 

\newpage
\bibliography{all,draft}
\end{document}